\numberwithin{equation}{section}
\newcommand{\N}{{\mathbb N}}
\newcommand{\R}{{\mathbb R}}
\newcommand{\Dis}{{\mathbb D}}
\newcommand{\Sf}{{\mathbb S}}
\newcommand{\eps}{\varepsilon}
\newcommand{\dom}[1]{{\rm dom}(#1)}
\newcommand{\ws}[1]{|d#1|}
\newcommand{\gra}[1]{{\mathcal G}_{#1}}
\newcommand{\epi}[1]{{\rm epi}\left(#1\right)}
\newcommand{\refe}[1]{{(\ref{#1})}}
\newcommand{\dys}{\displaystyle}
\renewcommand{\theta}{\vartheta}
\newcommand{\fix}{{{\rm Fix}(G)}}
\newcommand{\fixs}{{{\rm Fix}(G_N)}}
\numberwithin{equation}{section}
\newtheorem{theorem}{Theorem}[section]
\newtheorem{proposition}[theorem]{Proposition}
\newtheorem{lemma}[theorem]{Lemma}
\newtheorem{remark}[theorem]{Remark}
\newtheorem{corollary}[theorem]{Corollary}
\newtheorem{definition}[theorem]{Definition}
\theoremstyle{definition}
\title{On the Palais principle for non-smooth functionals}
\author{Marco Squassina}
\address{Dipartimento di Informatica
\newline\indent
Universit\`a degli Studi di Verona
\newline\indent
C\'a Vignal 2, Strada Le Grazie 15, I-37134 Verona, Italy}
\email{marco.squassina@univr.it}
\thanks{Research partially supported by PRIN: {\em Metodi Variazionali e Topologici
nello Studio di Fenomeni non Lineari}}
\begin{document}
	
\date{\today}

\subjclass[2000]{74G65; 35J62; 35A15; 35B06; 58E05}

\keywords{Palais' criticality principle, non-smooth analysis, quasi-linear PDEs}

\begin{abstract}
If $G$ is a compact Lie group acting linearly on a Banach space $X$ and
$f$ is a $G$-invariant function on $X$, we provide some new versions of the so-called Palais' principle of symmetric criticality 
for $f:X\to\bar\R$, in the framework of non-smooth critical point theory. We apply the results to a class
of quasi-linear PDEs associated with invariant functionals which are merely lower semi-continuous and thus could
not be treated by previous non-smooth versions of the principle in the literature.
\end{abstract}
\maketitle


\bigskip
\begin{center}
\begin{minipage}{11cm}
\footnotesize
\tableofcontents
\end{minipage}
\end{center}

\bigskip

\section{Introduction}
Let $G$ be a compact Lie group which acts linearly on a Banach space $X$, let $f:X\to\bar\R$ be a
$G$-invariant functional and consider the fixed point set of $X$ under $G$,
$$
\fix=\bigcap_{g\in G}\{u\in X: (g-{\rm Id})u=0\},
$$
which is a closed subspace of $X$.
In order to detect critical points of $f$, a possible
approach is based upon what is currently known as Palais' principle of symmetric criticality, that is
looking for critical points of $f|_\fix$ in order to locate 
critical points of $f$. The origin of the principle is rather unclear and
the first implicit use seems to trace back to 1950 inside {\sc H. Weyl}
derivation of the Einstein field equations \cite{weyl}. Next {\sc S. Coleman} \cite{coleman}
makes an explicit reference to it around 1975. 
For $G$-invariant functionals of class $C^1$ the principle was rigorously formulated by
{\sc R.S. Palais} in 1979 in a celebrated paper~\cite{palaiscmp} (see also~\cite{ladiz,palais1,palais2}), reading as 
\begin{equation}
	\label{formul0}
\begin{cases}
u\in \fix & \\
df|_\fix(u)=0 &
\end{cases}
\,\,\,\Longrightarrow\quad  df(u)=0.
\end{equation}
In other words, in order for an invariant point $u$ to be critical for $f$, it is sufficient that
it be critical for $f|_\fix$, namely if the directional derivative $f'(u)v$ vanishes along any direction $v$ tangent to $\fix$,
then it must vanish along the directions transverse to $\fix$ as well. 
Although the implication is valid in a rather broad context, 
there are of course some pathological counterexamples showing that it can fail to hold (see~\cite[Section 3]{palaiscmp}). 

The validity of Palais' principle of symmetric criticality has a powerful impact on applications to nonlinear problems of 
mathematical physics which are set on unbounded domains (for instance $\R^N$, half-spaces or strips),
are invariant under some linear transformations, such as rotations, thus exhibiting 
symmetry (for instance, spherical or cylindrical) and are associated with a suitable 
energy functional $f$. In fact, in this framework, it
is often the case that the solutions of a PDE
live in a Banach space $X$ which is continuously embedded into a space Banach $W$
but, unfortunately, the injection $X\hookrightarrow W$ fails to be compact. Typically
the unboundedness of the domain where the problem is set prevents these inclusions to be compact and 
the problem cannot be solved by standard methods of nonlinear analysis.
On the contrary, in many interesting concrete situations,
for suitable compact Lie groups $G$, the compactness of the embedding 
${\rm Fix}(G)\hookrightarrow W$ is restored. We refer the reader to
the classical works by {\sc P.L. Lions} \cite{lionscomp} and {\sc W.A. Strauss} \cite{strausscomp} (see for instance
\cite[Section 1.5]{willem} for a list of the results of \cite{lionscomp,strausscomp}).
This fact offers a fruitful tool in order to find a critical point $u$ of $f|_\fix$ at the
Mountain Pass level (hence a critical point of $f$ by the principle and, in turn, a solution to the associated
problem, in a suitable sense). In the language of non-smooth critical point theory the implication~\eqref{formul0}
corresponds to the property
\begin{equation}
	\label{formul1}
\begin{cases}
u\in \fix & \\
0\in\partial f|_\fix(u) &
\end{cases}
\,\,\,\Longrightarrow\quad  0\in\partial f(u),
\end{equation}
where $\partial f(u)$ is the subdifferential of $f$ at $u$. In the current literature,
some papers have already investigated the validity of this implication, for locally Lipschitz functions 
by {\sc W.~Krawcewicz} and {\sc W.~Marzantowicz} \cite{marza}, for
sums of $C^1$ (resp. locally Lipschitz) functions with lower semi-continuous convex functionals 
in the nice work by {\sc J. Kobayashi} and {\sc M. Otani} \cite{koba}
(resp.~by {\sc A.~Kristaly, C.~Varga} and {\sc V.~Varga} \cite{krisvarga}). 
On the other hand, a suitable definition of $\partial f(u)$ for any function $f:X\to\bar\R$ has been recently
given by {\sc I. Campa} and {\sc M. Degiovanni} in \cite{campad}, consistently
with the subdifferential of convex functions and containing the subdifferential in the sense of 
{\sc F.H.~Clarke} \cite{clarke} for locally Lipschitz functions. 

The main goal of this paper is to obtain
two new Palais' symmetric criticality type principles for non-smooth functionals and furnish an application to elliptic PDEs
associated with merely lower semi-continuous functionals, thus out of the frame of the previous literature
available on the subject.
In Section~\ref{recallsns} we will recall some definitions and tools
of non-smooth analysis. In Section~\ref{firstab} we provide a first non-smooth version of the 
symmetric criticality principle, Theorem~\ref{abstr11}, for a class of functionals 
which includes, in particular, the cases covered by the previous literature (cf.~Proposition~\ref{consistenza}). 
Theorem~\ref{abstr11} asserts that, under suitable assumptions, property~\eqref{formul1} holds true. 
In Section~\ref{secondab} we shall obtain a second abstract result in a very general framework 
(Theorem~\ref{abstr22}). The result proves, under suitable assumptions, the implication
\begin{equation}
	\label{formul2}
\begin{cases}
u\in \fix & \\
0\in\partial f|_\fix(u) & \\
\text{$\exists$ $V_u\subset X$ invariant, dense subspace}
\end{cases}
\Longrightarrow\quad  \forall v\in V_u:\,\, f'(u)v=0.
\end{equation}
In fact, in many concrete situations, the classical directional derivative exists on suitable dense subspaces
of $X$, and the criticality of $u$ over these spaces is often sufficient to guarantee that, actually, $u$ is the solution
of a corresponding PDE, in a generalized or distributional sense (cf.~\cite{pelsqu}). In Section~\ref{concrete}, we shall discuss 
an application (Theorem~\ref{appl}) of Theorem~\ref{abstr22} showing that, for an unbounded domain 
$\Omega\subset\R^N$ invariant under a group $G_N$ of rotations, under suitable assumptions
there exists a nontrivial $G_N$-invariant solution to the quasi-linear problem	
\begin{equation*}
\begin{cases}
	\displaystyle
\int_\Omega j_t(u,|Du|)\frac{Du}{|Du|}\cdot D v +
	\int_\Omega j_s(u,|Du|)v+\int_\Omega |u|^{p-2}uv=\int_\Omega |u|^{q-2}uv, & \\
\,\,\forall v\in C^\infty_c(\Omega). &
\end{cases}
\end{equation*}
As a consequence (Corollary~\ref{appl-cor1}), in the case $\Omega=\R^N$, we will prove that this problem admits
a radial solution, extending \cite[Theorems 1.29]{willem} to the quasi-linear case.
We would like to stress that the previous non-smooth 
versions of Palais' symmetric criticality principle in the literature \cite{koba,krisvarga} cannot be used in order to prove
the existence of solutions to the above elliptic problem, since its associated invariant functional 
is no more than lower semi-continuous (and non convex). For an overview on
these classes of quasi-linear problems, we refer the interested reader to the monograph~\cite{sq-monog}.
\vskip2pt
\noindent
In contrast with the use of Palais's symmetric criticality principle discussed above,
a direct approach is also possible, where, in the search of symmetric solutions, one avoids restricting the functional to $\fix$.
This approach has been recently developed by the author \cite{sq-ccm} for a class of lower semi-continuous functionals, extending 
previous results from \cite{jvsh1} valid for $C^1$ functionals. 
In Section~\ref{concretesec} we state and prove a new abstract result, Theorem~\ref{main}, yielding
the existence of a Palais-Smale sequence $(u_h)\subset X$ for $f$ at level $c$ which is compact in a 
suitable Banach space $W$ containing $X$, and it becomes more and more symmetric, as $h$ increases. 
A final remark is now adequate. Let $\Dis$ and $\Sf$ are the closed unit ball and the sphere in $\R^m$ ($m\geq 1$) 
respectively and $\Gamma_0\subset C(\Sf,X)$, consider the (unrestricted) Mountain Pass energy level
$$
c:=\inf_{\gamma\in\Gamma}\sup_{\tau\in \Dis} f(\gamma(\tau)),\,\,\,\,\quad
\Gamma:=\big\{\gamma\in C(\Dis,X):\gamma|_{\Sf}\in \Gamma_0\big\}.
$$
Then, setting 
$$
\Gamma_0^\fix:=\{\gamma\in\Gamma_0: \gamma(\Sf)\subset \fix \},
$$ 
one can consider, for $f|_{\fix}:\fix\to\bar\R$, the restricted Mountain Pass energy level
\begin{equation}
	\label{critinv}
c^\fix :=\inf_{\gamma\in\Gamma^\fix}\sup_{\tau\in \Dis} f(\gamma(\tau)),\,\,\,\,\quad
\Gamma^\fix :=\{\gamma\in C(\Dis,\fix):\gamma|_{\Sf}\in \Gamma_0^\fix\}.
\end{equation}
Then, automatically, it holds $c\leq c^\fix$ in light of
$\Gamma^\fix\subset\Gamma$. Hence, in general, the strategy based upon Palais' criticality principle 
ensures a simpler approach to concrete problems but, as a drawback, 
the minimality property of the Mountain Pass energy level $c$ might be lost,
which is not the case in the above described direct approach.

\section{Some notions from non-smooth analysis}
\label{recallsns}
In this section we consider abstract notions
and results that will be used in the proof 
of the main result. For the definitions, we refer e.g.~to~\cite{campad,cdm,dm}.
Let $X$ be a metric space, $B(u,\delta)$ the open ball of center
$u$ and of radius $\delta$ and let $f:X\to\bar\R$
be a function. We set
\begin{equation*}
\dom{f} = \left\{u\in X:\,f(u)<+\infty\right\}
\quad\text{and}\quad
\epi{f}=\left\{(u,\xi)\in X\times\R:\,f(u)\leq\xi\right\}.
\end{equation*}

We recall the definition of the weak slope.

\begin{definition}\label{defslope}
For every $u\in X$ with $f(u)\in\R$, we denote by $|df|(u)$ the supremum
of $\sigma$'s in $[0,\infty)$ such that there exist $\delta>0$ and a continuous map
$$
{\mathcal H}:\,B((u,f(u)), \delta)\cap {\rm epi}(f) \times[ 0, \delta]  \to X,
$$
satisfying
\begin{equation*}
d({\mathcal H}((\xi,\mu),t),\xi) \leq t,\qquad
f({\mathcal H}((\xi,\mu),t)) \leq f(\xi)-\sigma t.
\end{equation*}
for all $(\xi,\mu)\in B((u,f(u)), \delta)\cap {\rm epi}(f)$
and $t\in [0,\delta]$. The extended real number $|df|(u)$ is called the weak
slope of $f$ at $u$.
\end{definition}

\begin{remark}\rm
Let $X$ be a metric space, $f:X \to \R $ a continuous function,
and $u\in X$. Then $|df|(u)$ is the supremum of the real numbers $ \sigma$ in
$[0,\infty)$ such that there exist $ \delta >0$
and a continuous map
$
{\mathcal H}\,:\,B(u, \delta) \times[ 0, \delta]  \to X,
$
such that, for every $v$ in $B(u,\delta) $, and for every
$t$ in $[0,\delta]$ it results
\begin{equation*}
d({\mathcal H}(v,t),v) \leq t,\qquad
f({\mathcal H}(v,t)) \leq f(v)-\sigma t.
\end{equation*}
\noindent
If furthermore $X$ is an open subset of a normed space $E$ and $f$ is a function of class $C^1$
on $X$, then $|df|(u)=\|df(u)\|$, for every $u\in X$ (see~\cite[Corollary 2.12]{dm}).
\end{remark}

\noindent
Let us define the function $\gra{f}:\epi{f}\to\R$ by 
\begin{equation}
\label{defg}
\gra{f}(u,\xi)=\xi.
\end{equation}
In the following, $\epi{f}$ will be endowed with the metric
$$
d((u,\xi),(v,\mu))=
(d(u,v)^2+(\xi-\mu)^2)^{1/2},
$$
so that the function $\gra{f}$ is Lipschitz continuous of constant $1$.
\vskip3pt
\noindent
We have the following

\begin{proposition}
\label{Prodefnwslsc}
For every $u\in X$ such that $f(u)\in\R$, we have
\begin{equation*}
\ws{f}(u)=
\begin{cases}
\dys \frac{\ws{\gra{f}}(u,f(u))}{{\sqrt{1-\ws{\gra{f}}(u,f(u))^2}}},
& \text{if $\ws{\gra{f}}(u,f(u))<1$}, \\
\noalign{\vskip4pt}
+\infty, & \text{if $\ws{\gra{f}}(u,f(u))=1$}.
\end{cases}
\end{equation*}
\end{proposition}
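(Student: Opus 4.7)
Write $a := \ws{\gra{f}}(u,f(u))$ and $s := \ws{f}(u)$; since $\gra{f}$ is $1$-Lipschitz, $a \in [0,1]$. The statement is equivalent to the pair of inequalities
\[
a \;\geq\; s/\sqrt{1+s^2} \qquad\text{and}\qquad s \;\geq\; a/\sqrt{1-a^2}\ \ (\text{when }a<1),
\]
with the usual convention $s/\sqrt{1+s^2}=1$ when $s=+\infty$; the case $a=1$ then comes from letting $\tilde a\uparrow 1$ in the second inequality, which forces $s=+\infty$. Both inequalities rest on the same Pythagorean bookkeeping: a simultaneous deformation along $X$ and a controlled drop along the $\R$-factor of $\epi{f}$ combine orthogonally in the product metric.

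For the first inequality, fix $\sigma\in(0,s)$ and a continuous $\mathcal{H}$ realizing slope $\sigma$ for $f$ as in Definition~\ref{defslope}. For small $\tau$ set
\[
\mathcal{K}((v,\mu),\tau) \;:=\; \Bigl(\mathcal{H}\bigl((v,\mu),\,\tau/\sqrt{1+\sigma^2}\bigr),\ \mu-\sigma\tau/\sqrt{1+\sigma^2}\Bigr).
\]
The inequality $f(\mathcal{H})\leq f(v)-\sigma\tau/\sqrt{1+\sigma^2}\leq \mu-\sigma\tau/\sqrt{1+\sigma^2}$ guarantees that $\mathcal{K}$ takes values in $\epi{f}$; its product-metric displacement satisfies $d(\mathcal{K},(v,\mu))^2\leq \tau^2/(1+\sigma^2)+\sigma^2\tau^2/(1+\sigma^2)=\tau^2$, while $\gra{f}$ drops at rate $\sigma/\sqrt{1+\sigma^2}$. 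Hence $a\geq\sigma/\sqrt{1+\sigma^2}$, and letting $\sigma\uparrow s$ yields the bound.

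For the second inequality (assuming $a>0$), fix $\tilde a\in(0,a)$ and a continuous $\mathcal{K}=(\mathcal{K}_1,\mathcal{K}_2)$ realizing slope $\tilde a$ for $\gra{f}$. Define
\[
\mathcal{H}((v,\mu),t)\;:=\;\mathcal{K}_1\bigl((v,\mu),\,t/\sqrt{1-\tilde a^2}\bigr),
\]
and write $\tau:=t/\sqrt{1-\tilde a^2}$. Since $\mu-\mathcal{K}_2\geq\tilde a\tau$ and $d(\mathcal{K},(v,\mu))^2 = d(\mathcal{K}_1,v)^2+(\mathcal{K}_2-\mu)^2\leq\tau^2$, one gets $d(\mathcal{K}_1,v)\leq\tau\sqrt{1-\tilde a^2}=t$, while $f(\mathcal{H})=f(\mathcal{K}_1)\leq \mathcal{K}_2\leq \mu-\tilde a t/\sqrt{1-\tilde a^2}$ gives the decrease of $f$ at rate $\tilde a/\sqrt{1-\tilde a^2}$. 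Letting $\tilde a\uparrow a$ concludes the inequality, and produces $s=+\infty$ when $a=1$.

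The delicate point sits in the last step: the natural bound produced on $f(\mathcal{H})$ is in terms of the second coordinate $\mu$, whereas Definition~\ref{defslope} phrases the decrease as $f(\xi)-\sigma t$. This is painless at graph points ($\mu=f(v)$) and one handles the general case either by feeding $(v,f(v))$ instead of $(v,\mu)$ into $\mathcal{K}$ after shrinking $\delta$, or by working throughout with the equivalent epigraphical formulation of $\ws{f}$ used by Campa--Degiovanni in \cite{campad}, where the condition $f(\mathcal{H})\leq\mu-\sigma t$ is adopted directly; the Pythagorean scaling $\sqrt{1-\tilde a^2}$ is then exactly the right conversion factor to match the two sides.
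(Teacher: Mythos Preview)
The paper does not supply its own proof; Proposition~\ref{Prodefnwslsc} is simply recalled from the references \cite{campad,cdm,dm} listed at the opening of Section~\ref{recallsns}. Your argument is precisely the standard Pythagorean rescaling found in those sources, and the two constructions (lifting an $f$-deformation to $\epi f$, and projecting a $\gra{f}$-deformation back to $X$) are correct.

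One caveat concerning the ``delicate point'' you flag at the end. Your first proposed repair---feeding $(v,f(v))$ into $\mathcal K$ in place of $(v,\mu)$---does not work as stated: $f$ is not assumed continuous (nor even lower semicontinuous in this section), so the map $(v,\mu)\mapsto(v,f(v))$ need not be continuous, and moreover $(v,f(v))$ may lie outside any prescribed ball around $(u,f(u))$ since nothing bounds $f(v)$ from below. The clean resolution is your second option. In the standard sources the defining inequality for $|df|$ in the epigraphical formulation is $f(\mathcal H((\xi,\mu),t))\le \mu-\sigma t$ rather than $f(\xi)-\sigma t$; with that reading of Definition~\ref{defslope}, your second construction gives $f(\mathcal H)\le \mu-\tilde a\, t/\sqrt{1-\tilde a^2}$ directly and the proof closes without further adjustment.
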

\noindent
In order to apply the abstract theory
to the study of lower semi-continuous functions, the following condition is crucial
\begin{equation}
\label{keycond}
\forall (u,\xi)\in\epi{f}:\,\,\,
f(u)<\xi\,\,\,\Longrightarrow\,\,\,\ws{\gra{f}}(u,\xi)=1.
\end{equation}
We refer the reader to~\cite{cdm,dm} where this is discussed.

\vskip4pt
\noindent
Let now $X$ and $X^*$ denote a real normed space and its topological dual, respectively. 
We recall~\cite[Definitions 4.3 and 5.5]{campad}, namely the definition of the generalized directional derivative.

\begin{definition}
	\label{effe0}
Let $u\in X$ with $f(u)\in\R$. For every $v\in X$ 
and $\eps>0$ we define $f_\eps^\circ(u;v)$ to be the infimum of the $r$'s in $\R$ such that
there exist $\delta>0$ and a continuous function 
$$
{\mathcal V}: B_\delta (u,f(u))\cap {\rm epi}(f)\times ]0,\delta]\to B_\eps(v),
$$
which satisfies
\begin{equation*}
f(\xi+t{\mathcal V}((\xi,\mu),t))\leq \mu+rt,
\end{equation*}	
whenever $(\xi,\mu)\in B_\delta(u,f(u))\cap {\rm epi}(f)$
and $t\in ]0,\delta]$. Finally, we set
$$
f^\circ(u;v):=\sup_{\eps>0}f_\eps^\circ(u;v)=\lim_{\eps\to 0^+}f_\eps^\circ(u;v).
$$	
We say that $f^\circ (u;v)$ is the directional derivative of $f$ at $u$ with respect to $v$.
\end{definition}

\noindent
It is readily seen that the directional derivative $f^\circ(u;v)$ does not change if the norm of $X$ is substituted by an equivalent one.
\vskip5pt
\noindent
According to~\cite[Definition 4.1, Corollary 4.7]{campad}, we recall the notion of subdifferential.

\begin{definition}
	\label{defsub}
Let $u\in X$ with $f(u)<+\infty$. We set
$$
\partial f(u):=\big\{\alpha\in X^*: \langle\alpha,v\rangle\leq f^\circ (u;v),\,\,\,\forall v\in X\big\}.
$$
The set $\partial f(u)$ is convex and weakly* closed in $X^*$.
\end{definition}

\noindent
Again, the set $\partial f(u)$ does not change if the norm of $X$ is substituted by an equivalent one.
\vskip5pt
\noindent	
We conclude the section by recalling \cite[Corollary 4.13 (ii)-(iii)]{campad}), 
establishing the connection between the weak slope $|df|(u)$ and the subdifferential $\partial f(u)$.	
	\begin{proposition}
		\label{connection}
Let $f:X\to \bar\R$ be a functional and $u\in X$ with $f(u)\in\R$. Assume that $|df|(u)<+\infty$.
Then $\partial f(u)\not=\emptyset$ and
$$
\min\{\|\alpha\|:\alpha\in \partial f(u)\}\leq |df|(u).
$$
In particular, $|df|(u)=0$ implies $0\in \partial f(u)$.
\end{proposition}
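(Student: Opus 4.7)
The plan is to combine the sublinear structure of the directional derivative $p(v) := f^\circ(u;v)$ with a Hahn-Banach argument, bridging weak slope and subdifferential through the epigraph reformulation of Proposition~\ref{Prodefnwslsc}. First I would verify that $p$ is sublinear on $X$ with values in $\R\cup\{+\infty\}$ and $p(0)=0$: positive homogeneity and subadditivity are obtained, respectively, by time-rescaling and by concatenating two admissible maps $\mathcal{V}$ as in Definition~\ref{effe0}.

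The core estimate, where the weak-slope hypothesis enters, is $f^\circ(u;v) \geq -|df|(u)\|v\|$ for every $v\in X$. To prove it, I argue by contradiction. If $f^\circ(u;v_0) < -\sigma_1\|v_0\|$ for some $v_0\neq 0$ and $\sigma_1>|df|(u)$, then by Definition~\ref{effe0}, for every small $\eps>0$ there exist $r$ with $|r| = \sigma'\|v_0\|$ for some $\sigma'>\sigma_1$, $\delta > 0$, and a continuous map $\mathcal{V}\colon B_\delta(u,f(u))\cap\epi{f}\times(0,\delta]\to B_\eps(v_0)$ such that $f(\xi+s\mathcal{V}((\xi,\mu),s))\leq \mu + rs$. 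Setting
\[
\widetilde{\mathcal{H}}((\xi,\mu),t) := \bigl(\xi+s\mathcal{V}((\xi,\mu),s),\,\mu+rs\bigr),\qquad s:=\frac{t}{\sqrt{(\|v_0\|+\eps)^2+r^2}},
\]
I obtain a continuous map into $\epi{f}$ (since $f(\xi+s\mathcal{V})\leq \mu+rs$) satisfying $d(\widetilde{\mathcal{H}}((\xi,\mu),t),(\xi,\mu))\leq t$ and decreasing $\gra{f}$ at rate $|r|/\sqrt{(\|v_0\|+\eps)^2+r^2}$. Letting $\eps\to 0^+$, this rate tends to $\sigma'/\sqrt{1+(\sigma')^2}$, which by strict monotonicity of $\tau\mapsto\tau/\sqrt{1+\tau^2}$ is strictly larger than $|df|(u)/\sqrt{1+|df|(u)^2} = |d\gra{f}|(u,f(u))$ (Proposition~\ref{Prodefnwslsc}), contradicting the definition of the weak slope for the $1$-Lipschitz map $\gra{f}$.

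With the core estimate in hand, I would apply Hahn-Banach to the inf-convolution
\[
q(v) := \inf\bigl\{p(v_1) + |df|(u)\|v_2\|:\ v_1+v_2 = v\bigr\},
\]
which is sublinear and, by the previous bound, satisfies $q(0)=0$; in particular $q$ is everywhere finite and Lipschitz. A standard extension then yields $\alpha\in X^*$ with $\alpha\leq q$ on $X$; taking $(v_1,v_2) = (v,0)$ yields $\alpha\leq p = f^\circ(u;\cdot)$, whence $\alpha\in \partial f(u)$, while $(v_1,v_2) = (0,v)$ gives $\alpha(v)\leq |df|(u)\|v\|$, whence $\|\alpha\|\leq |df|(u)$. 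The case $|df|(u)=0$ is then immediate. The main obstacle is the core estimate: concretely, constructing the epigraph deformation $\widetilde{\mathcal{H}}$ from the directional one $\mathcal{V}$ with the appropriate rescaling, and matching the descent rate it produces against $|d\gra{f}|(u,f(u))$ via Proposition~\ref{Prodefnwslsc}.
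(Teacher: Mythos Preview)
The paper does not actually prove Proposition~\ref{connection}: it is quoted verbatim from \cite[Corollary~4.13(ii)--(iii)]{campad} as a known fact, so there is no in-paper argument to compare against. Your proposal is a correct self-contained proof, and it follows the natural route one expects from the machinery of \cite{campad}: reduce to the epigraph function $\gra{f}$ via Proposition~\ref{Prodefnwslsc}, establish the lower bound $f^\circ(u;v)\geq -|df|(u)\,\|v\|$ by building an $\epi{f}$-deformation out of the map $\mathcal V$ of Definition~\ref{effe0}, and then separate with Hahn--Banach using the sublinearity of $v\mapsto f^\circ(u;v)$ (which the paper records via \cite[Corollary~4.6]{campad}).

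Two small points of polish. First, in your contradiction step you may simply fix $r=-\sigma_1\|v_0\|$ for every $\eps>0$ (since $f^\circ_\eps(u;v_0)\leq f^\circ(u;v_0)<r$); there is no need to let $\sigma'$ depend on $\eps$, and keeping it fixed makes the limit $\eps\to 0^+$ cleaner. Second, you produce an $\alpha\in\partial f(u)$ with $\|\alpha\|\leq |df|(u)$, which gives the stated inequality for the infimum; to justify that the \emph{minimum} is attained, invoke that $\partial f(u)$ is weak$^*$ closed (Definition~\ref{defsub}), intersect with the closed ball of radius $|df|(u)$ to get a nonempty weak$^*$ compact set, and use weak$^*$ lower semicontinuity of the dual norm.
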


The previous notions allow us to give the following
\begin{definition}
	\label{cptdef}
We say that $u\in\dom{f}$ is a critical
point of $f$ if $0\in\partial f(u)$.
Moreover, $c\in\R$ is a critical value of
$f$ if there exists a critical point $u\in\dom{f}$
of $f$ with $f(u)=c$.
\end{definition}

\begin{definition}\label{defps}
Let $c\in\R$. We say that $f$ satisfies
the Palais-Smale condition at level $c$ ($(PS)_c$
in short), if every sequence $(u_n)$ in $\dom{f}$ such that
$0\in\partial f(u_n)$ and $f(u_n) \to c$
admits a subsequence $(u_{n_k})$ converging in $X$.
\end{definition}

\section{A first abstract result}
\label{firstab}

Let $G$ be a compact Lie group acting linearly on a real Banach space $(X,\|\cdot\|)$. By suitably renorming $X$ with an
equivalent norm, we may assume without loss of generality that the action of $G$ over $X$ is isometric, that is
$$
\forall u\in X,\,\,\forall g\in G:\quad \|gu\|=\|u\|.
$$
For the proof, we refer the reader e.g.~to~\cite[Proposition 3.15]{koba}. 
\vskip3pt
\noindent
Let now $\mu$ denote the normalized {\em Haar measure} and define the map $A:X\to X$, 
known as {\em averaging map} or barycenter map, providing the center of 
gravity of the orbit of a $v\in X$, by 
$$
\forall v\in X:\quad Av:=\int_G gv d\mu(g).
$$
The map $A$ enjoys some useful properties. Firstly, $Av=v$ for all $v\in \fix$.
Moreover, it is a continuous linear projection from $X$ onto $\fix$,
by the left invariance of $\mu$. Finally, if $C\subseteq X$ is a closed, invariant
(namely $gC\subseteq C$ for all $g\in G$), convex subset of $X$, then $A(C)\subseteq C$. See 
for instance~\cite[Section 5]{palaiscmp}.

\subsubsection{The statement}
We consider the following assumption on $f$:
\vskip2pt
\noindent
We assume that for all $u,v\in \fix$ there exist $\rho>0$ and $C>0$ with
\begin{equation}
	\begin{cases}
	\label{assAbs}
	\displaystyle\frac{f(\zeta+t(z-Az))-f(\zeta)}{t}\geq -C\|z-Az\|,  & \\
\forall \zeta\in B_\rho(u)\cap \fix,\,\,\,
\forall z\in B_\rho (v),\,\,\,
\forall t\in]0,\rho]\,\,\,\,\text{with}\,\,\, f(\zeta-tAz)\leq f(u)+\rho.
\end{cases}
\end{equation}

\vskip3pt
\noindent
As stated next, condition \eqref{assAbs} includes, in particular, the classes of functions already considered 
in the previous literature~\cite{koba,krisvarga} on the subject. More precisely, we have the following

\begin{proposition}
	\label{consistenza}
	Let $f:X\to\R\cup\{+\infty\}$ be such that $f=f_0+f_1$, where $f_0:X\to\R$ is a locally lipschitz
	function and $f_1:X\to \R\cup\{+\infty\}$ is a convex, proper, lower semi-continuous and
	$G$-invariant function. Then condition~\eqref{assAbs} is satisfied.
\end{proposition}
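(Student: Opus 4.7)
The plan is to split $f=f_0+f_1$ and verify the bound in \eqref{assAbs} separately for each summand. For the locally Lipschitz part $f_0$, fix $u,v\in\fix$ and pick $\rho_0>0$ and $L>0$ such that $f_0$ is $L$-Lipschitz on $B(u,2\rho_0)$. Since $v\in\fix$ yields $Av=v$, for every $z\in B_\rho(v)$ one has
\begin{equation*}
\|z-Az\|=\|(z-v)-A(z-v)\|\leq (1+\|A\|)\|z-v\|\leq (1+\|A\|)\rho.
\end{equation*}
Hence, choosing $\rho>0$ small enough, for every $\zeta\in B_\rho(u)\cap\fix$, $z\in B_\rho(v)$ and $t\in]0,\rho]$ the point $\zeta+t(z-Az)$ lies in $B(u,2\rho_0)$, and the Lipschitz estimate gives
\begin{equation*}
f_0(\zeta+t(z-Az))-f_0(\zeta)\geq -L\,t\,\|z-Az\|.
\end{equation*}

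For $f_1$, the key point is that the averaging map annihilates the perturbation $t(z-Az)$: using $A\zeta=\zeta$ (since $\zeta\in\fix$) together with $A^2=A$ (which holds because $A$ is a projection onto $\fix$, as recalled in the preamble to this section), one computes
\begin{equation*}
A\bigl(\zeta+t(z-Az)\bigr)=\zeta+tAz-tA^2z=\zeta.
\end{equation*}
Combining the convexity of $f_1$ (Jensen's inequality with respect to the normalized Haar measure $\mu$) with the $G$-invariance of $f_1$, we obtain
\begin{equation*}
f_1(\zeta)=f_1\bigl(A(\zeta+t(z-Az))\bigr)\leq\int_G f_1\bigl(g(\zeta+t(z-Az))\bigr)\,d\mu(g)=f_1(\zeta+t(z-Az)),
\end{equation*}
so the difference quotient for $f_1$ is nonnegative. (If $f_1(\zeta)=+\infty$, the same estimate forces $f_1(\zeta+t(z-Az))=+\infty$ as well, so there is nothing to prove.)

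Adding the two bounds yields $\frac{f(\zeta+t(z-Az))-f(\zeta)}{t}\geq -L\,\|z-Az\|$, so \eqref{assAbs} holds with $C=L$; in fact the side restriction on $f(\zeta-tAz)$ present in the statement is never actually invoked. The main obstacle is conceptual rather than technical: one has to recognize that $A^2=A$ implies $A(z-Az)=0$, i.e.~the perturbation sits in the kernel of the averaging operator, and then Jensen's inequality together with the invariance of $f_1$ immediately produces the correct sign of the difference quotient for $f_1$, reducing the whole argument to the routine Lipschitz bound on $f_0$.
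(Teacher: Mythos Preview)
Your proof is correct and follows essentially the same strategy as the paper: check \eqref{assAbs} separately for $f_0$ and $f_1$, handle $f_0$ by a local Lipschitz bound, and for $f_1$ observe that $A(\zeta+t(z-Az))=\zeta$ together with convexity and $G$-invariance forces $f_1(\zeta)\le f_1(\zeta+t(z-Az))$. The only cosmetic difference is that the paper phrases this last inequality via the property $A(C)\subseteq C$ applied to the sublevel set $C=\{f_1\le f_1(\zeta+t(z-Az))\}$, whereas you invoke Jensen's inequality for the Bochner integral directly---these are two equivalent packagings of the same fact.
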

\begin{proof}
	Condition~\eqref{assAbs} can be checked independently for $f_0$ and $f_1$. The proof for $f_0$ is trivial.
	Let us turn to the proof for $f_1$. For every $\zeta\in \fix$, $z\in X$ and $t\geq 0$, we consider the set
	$$
	C:=\big\{\eta\in X: f_1(\eta)\leq f_1(\zeta+t(z-Az))\big\}.
	$$
	Of course $C$ is convex and closed, since $f_1$ is convex and lower semi-continuous. Moreover,
	$C$ is $G$-invariant too, since $f_1$ is $G$-invariant. Then, $A(C)\subseteq C$. Recalling 
	that $\zeta\in \fix$, we have $\zeta=A(\zeta+t(z-Az))\in C$, namely $f_1(\zeta+t(z-Az))-f_1(\zeta)\geq 0$,
	which yields~\eqref{assAbs}.
\end{proof}
\vskip2pt
\noindent
The main result of the section is the following

\begin{theorem}
	\label{abstr11}
Let $G$ be a compact Lie group acting linearly on $X$, let
$f:X\to \bar\R$ be a $G$-invariant functional satisfying condition~\eqref{assAbs} 
and $u\in \fix$ with $f(u)\in\R$ 
be a critical point of $f|_{\fix}$. Then $u$ is a critical point of $f$.
\end{theorem}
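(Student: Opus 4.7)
The goal is to establish $0 \in \partial f(u)$, equivalently $f^\circ(u;v) \geq 0$ for every $v \in X$. My plan follows the classical Palais scheme, lifted to the Campa--Degiovanni setting: (a) exploit $G$-invariance to show that $f^\circ(u;\cdot)$ is $G$-invariant; (b) use sublinearity of $f^\circ(u;\cdot)$ and a Jensen inequality along the Haar integral $Av = \int_G gv\,d\mu(g)$ to reduce to directions in $\fix$; (c) use assumption \eqref{assAbs} to identify $f^\circ(u;w)$ with $(f|_\fix)^\circ(u;w)$ for $w\in\fix$, where the criticality of $u$ for $f|_\fix$ yields the sign.

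For (a), if $\mathcal V:B_\delta(u,f(u))\cap\epi{f}\times\,]0,\delta]\to B_\eps(v)$ is a continuous witness for $f_\eps^\circ(u;v)\leq r$, then $(\xi,\mu,t)\mapsto g\mathcal V(g^{-1}\xi,\mu,t)$ is a rate-$r$ witness for $f_\eps^\circ(u;gv)$: the substitution $\xi\mapsto g^{-1}\xi$ preserves $B_\delta(u,f(u))\cap\epi{f}$ because $g^{-1}u=u$, the action is isometric, and $f\circ g^{-1}=f$. Interchanging $g$ and $g^{-1}$ gives $f^\circ(u;gv)=f^\circ(u;v)$. Granting (b) --- sublinearity of $f^\circ(u;\cdot)$ being standard in the Campa--Degiovanni framework \cite{campad} --- Jensen's inequality for the Bochner integral $Av$ yields
\[
f^\circ(u;Av)\;\leq\;\int_G f^\circ(u;gv)\,d\mu(g)\;=\;f^\circ(u;v).
\]

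The core of the argument is (c). The inequality $(f|_\fix)^\circ(u;w)\geq f^\circ(u;w)$ is routine. For the reverse, pick $r>f^\circ(u;w)$ and a corresponding continuous $\mathcal V$ taking values in $B_\eps(w)$ with $f(\zeta+t\mathcal V)\leq\mu+rt$. I set $\mathcal V_0:=A\mathcal V$, which is continuous, lies in $\fix$, and satisfies $\|\mathcal V_0-w\|=\|A(\mathcal V-w)\|\leq\eps$. Invoking \eqref{assAbs} --- with constants $\rho,C$ associated to the pair $(u,w)\in\fix\times\fix$ --- at the point $\zeta':=\zeta+tA\mathcal V\in\fix\cap B_\rho(u)$ (for $\delta,t$ small) with $z:=\mathcal V\in B_\rho(w)$, and checking that the side condition $f(\zeta'-tA\mathcal V)=f(\zeta)\leq\mu\leq f(u)+\rho$ is met, gives
\[
f(\zeta+t\mathcal V)\;=\;f\bigl(\zeta'+t(\mathcal V-A\mathcal V)\bigr)\;\geq\;f(\zeta+tA\mathcal V)-Ct\|\mathcal V-A\mathcal V\|.
\]
Since $w\in\fix$, $\mathcal V-A\mathcal V=(\mathcal V-w)-A(\mathcal V-w)$ has norm at most $2\eps$, so $f(\zeta+tA\mathcal V)\leq\mu+(r+2C\eps)t$, whence $(f|_\fix)_\eps^\circ(u;w)\leq r+2C\eps$; letting $\eps\to 0^+$ yields $(f|_\fix)^\circ(u;w)\leq f^\circ(u;w)$.

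Combining (a)--(c) with $w:=Av\in\fix$ and $0\in\partial(f|_\fix)(u)$ (which forces $(f|_\fix)^\circ(u;Av)\geq 0$ by Definition \ref{defsub}), I conclude
\[
f^\circ(u;v)\;\geq\;f^\circ(u;Av)\;=\;(f|_\fix)^\circ(u;Av)\;\geq\;0,
\]
valid for every $v\in X$, hence $0\in\partial f(u)$. The main obstacle is step (c): one must orchestrate the averaging with \eqref{assAbs} so as to keep the auxiliary point $\zeta'$ in the admissible region and verify the side condition, while exploiting $w\in\fix$ to bound $\|\mathcal V-A\mathcal V\|$ by a multiple of $\eps$ --- precisely what makes the error term vanish in the limit.
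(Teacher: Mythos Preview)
Your proof is correct and follows the same three-step architecture as the paper: invariance of $f^\circ(u;\cdot)$ under $G$ (Proposition~\ref{invariance}), reduction to directions in $\fix$ via averaging (Proposition~\ref{primostep}), and the comparison $f^\circ(u;w)\geq (f|_\fix)^\circ(u;w)$ for $w\in\fix$ using~\eqref{assAbs} (Proposition~\ref{legamefix}). The only cosmetic difference is that in step~(b) you invoke Jensen's inequality for the Bochner integral, whereas the paper argues by contradiction on the closed convex $G$-invariant sublevel set $C_\eps=\{v:f^\circ(u;v)\leq -\eps\}$; these are two phrasings of the same fact.

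One small overstatement: you assert that the inequality $(f|_\fix)^\circ(u;w)\geq f^\circ(u;w)$ is ``routine'', and then write an equality in the final chain. This direction is \emph{not} obvious---a witness for $(f|_\fix)_\eps^\circ$ is defined only on $\epi{f}\cap(\fix\times\R)$ and there is no automatic continuous extension to $\epi{f}$---and the paper does not claim it. Fortunately you never use it: your argument only needs $f^\circ(u;Av)\geq (f|_\fix)^\circ(u;Av)$, which is precisely what you prove via~\eqref{assAbs}. Replace the ``$=$'' in your final display by ``$\geq$'' and drop the unjustified claim, and the proof is clean.
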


The theorem is obtained in a quite simple fashion via the abstract machinery developed in \cite{campad}
and which does not actually involve any a-priori regularity assumption on the map $f$. Assumption \eqref{assAbs} 
only enters in the proof of Proposition \ref{legamefix}.

\vskip4pt
\subsubsection{Some preliminary results}

Next we state some preparatory results for Theorem~\ref{abstr11}.

\begin{proposition}
	\label{invariance}
	Let $G$ be a compact Lie group acting linearly on $X$, let
	$f:X\to \bar\R$ be a $G$-invariant functional and $u\in \fix$ with $f(u)\in\R$.
	Then
	$$
	\forall v\in X,\,\,\forall g\in G:\quad
	f^\circ(u;v)=f^\circ(u;gv).
	$$
\end{proposition}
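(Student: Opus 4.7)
The plan is to go directly to Definition~\ref{effe0} and show that the auxiliary quantities $f_\eps^\circ(u;v)$ are $G$-invariant with respect to the second argument, which will give the statement by taking the supremum over $\eps>0$.

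First, I would fix $v\in X$, $g\in G$ and $\eps>0$, and work from an arbitrary $r>f_\eps^\circ(u;v)$, choosing $\delta>0$ and a continuous map
\[
{\mathcal V}:B_\delta(u,f(u))\cap\epi{f}\times]0,\delta]\to B_\eps(v)
\]
realising the condition in Definition~\ref{effe0}. The natural candidate for a corresponding map adapted to $gv$ is
\[
\widetilde{\mathcal V}((\xi,\mu),t):=g\,{\mathcal V}\bigl((g^{-1}\xi,\mu),t\bigr).
\]
The key step is to check that $\widetilde{\mathcal V}$ is well defined and lies in $B_\eps(gv)$. Since we may assume the action of $G$ is isometric and $u\in\fix$ gives $g^{-1}u=u$, for $(\xi,\mu)\in B_\delta(u,f(u))$ we get $\|g^{-1}\xi-u\|=\|\xi-u\|$, so $(g^{-1}\xi,\mu)\in B_\delta(u,f(u))$; moreover $G$-invariance of $f$ yields $f(g^{-1}\xi)=f(\xi)\leq\mu$, so $(g^{-1}\xi,\mu)\in\epi{f}$. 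Then isometry gives $\|\widetilde{\mathcal V}((\xi,\mu),t)-gv\|=\|{\mathcal V}((g^{-1}\xi,\mu),t)-v\|<\eps$.

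Next I would verify the required inequality: using $G$-invariance of $f$ one more time,
\[
f\bigl(\xi+t\,\widetilde{\mathcal V}((\xi,\mu),t)\bigr)=f\bigl(g^{-1}\xi+t\,{\mathcal V}((g^{-1}\xi,\mu),t)\bigr)\leq \mu+rt,
\]
so $\widetilde{\mathcal V}$ is admissible for $gv$ in Definition~\ref{effe0}. This gives $f_\eps^\circ(u;gv)\leq r$, and since $r>f_\eps^\circ(u;v)$ was arbitrary we conclude $f_\eps^\circ(u;gv)\leq f_\eps^\circ(u;v)$. The reverse inequality follows by applying exactly the same construction with $g^{-1}$ in place of $g$ and $gv$ in place of $v$. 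Passing to the supremum over $\eps>0$ yields the claim.

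I do not expect any substantial obstacle here; the entire argument is a routine symmetrisation of the map ${\mathcal V}$. The only point one has to be mindful of is making sure each of the three requirements in Definition~\ref{effe0} (the source of $\widetilde{\mathcal V}$ being a ball in $\epi{f}$ around $(u,f(u))$, the target being $B_\eps(gv)$, and the epigraph inequality) is preserved by the conjugation, and all three reduce to the combination of $u\in\fix$, isometry of the action, and $G$-invariance of $f$.
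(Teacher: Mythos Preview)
Your proposal is correct and follows essentially the same approach as the paper's proof: both conjugate the map ${\mathcal V}$ by the group element to transport the data from the direction $v$ to the direction $gv$ (the paper writes it as $g^{-1}{\mathcal V}((g\xi,\mu),t)$ targeting $g^{-1}v$, which is just a relabeling of your construction). The verification of the three requirements in Definition~\ref{effe0} proceeds exactly as you describe, using $u\in\fix$, isometry of the action, and $G$-invariance of $f$.
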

\begin{proof}
	As recalled, the action of $G$ over $X$ can be assumed to be isometric.
Given $v\in X$ and $g\in G$, fix $\eps>0$. Let $r\in\R$ such that 
there exist $\delta>0$ and a continuous function 
$$
{\mathcal V}: B_\delta (u,f(u))\cap {\rm epi}(f)\times ]0,\delta]\to B_\eps(v),
$$
according to Definition \ref{effe0}. Since $u\in \fix$ and $f$ is $G$-invariant, 
if $(\xi,\mu)\in B_\delta(u,f(u))\cap {\rm epi}(f)$, then also $(g\xi,\mu)\in B_\delta(u,f(u))\cap {\rm epi}(f)$
as $\|g\xi-u\|=\|g\xi-gu\|=\|\xi-u\|\leq \delta$ and $f(g\xi)=f(\xi)\leq\mu$.
Then, we are allowed to consider the continuous function
$$
\hat {\mathcal V}: B_\delta (u,f(u))\cap {\rm epi}(f)\times ]0,\delta]\to B_{\eps}(g^{-1}v),
$$
defined by setting
$$
\hat {\mathcal V}((\xi,\mu),t):=g^{-1} {\mathcal V}((g\xi,\mu),t)
$$
for all $(\xi,\mu)\in B_\delta(u,f(u))\cap {\rm epi}(f)$
and $t\in ]0,\delta]$. It follows that $\hat {\mathcal V}$ is well defined as
$$
\|\hat {\mathcal V}((\xi,\mu),t)-g^{-1}v\|=\|g^{-1} {\mathcal V}((g\xi,\mu),t)-g^{-1}v\|=\|{\mathcal V}((g\xi,\mu),t)-v\|\leq \eps.
$$
Furthermore, it follows
\begin{align*}
f(\xi+t\hat {\mathcal V}((\xi,\mu),t)) &=f(\xi+tg^{-1} {\mathcal V}((g\xi,\mu),t))  \\
& = f(g\xi+t {\mathcal V}((g\xi,\mu),t))  
 \leq \mu+rt,
\end{align*}
for all $(\xi,\mu)\in B_\delta(u,f(u))\cap {\rm epi}(f)$
and $t\in ]0,\delta]$. By definition, it follows $f_\eps^\circ(u;g^{-1}v)\leq r$. By 
the arbitrariness of $r$, $f_\eps^\circ(u;g^{-1}v)\leq f_\eps^\circ(u;v)$.
In a similar fashion, the opposite inequality follows as well. In conclusion, 
$f_\eps^\circ(u;g^{-1}v)=f_\eps^\circ(u;v)$ for every $\eps>0$. 
The assertion follows by letting $\eps\to 0$.
\end{proof}

We now have the following

\begin{proposition}
	\label{primostep}
	Let $G$ be a compact Lie group acting linearly on $X$, let
	$f:X\to \bar\R$ be a $G$-invariant functional and $u\in \fix$ with $f(u)\in\R$.
	Assume that
	$$
	\forall v\in \fix :\quad
	f^\circ(u;v)\geq 0.
	$$
Then
	$$
	\forall v\in X:\quad f^\circ(u;v)\geq 0.
	$$
\end{proposition}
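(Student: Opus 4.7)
The plan is to leverage the averaging map $A:X\to\fix$ to transfer positivity of the generalized derivative along invariant directions to positivity along arbitrary directions. Given an arbitrary $v\in X$, the vector $Av=\int_G gv\,d\mu(g)$ belongs to $\fix$, so the hypothesis immediately yields $f^\circ(u;Av)\geq 0$. The whole content of the proposition is therefore to compare $f^\circ(u;Av)$ with $f^\circ(u;v)$ in the right direction, namely to show
$$
f^\circ(u;Av)\leq f^\circ(u;v).
$$

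To establish this inequality I would invoke two ingredients. The first is Proposition~\ref{invariance}, which gives $f^\circ(u;gv)=f^\circ(u;v)$ for every $g\in G$, so the map $g\mapsto f^\circ(u;gv)$ is constant on $G$. The second is the fact, proved in~\cite{campad}, that $v\mapsto f^\circ(u;v)$ is convex and lower semi-continuous on $X$ (it is the supremum, over $\varepsilon>0$, of the sublinear maps $f_\varepsilon^\circ(u;\cdot)$ coming from Definition~\ref{effe0}). Combining these via Jensen's inequality for the Bochner integral $Av=\int_G gv\,d\mu(g)$ with respect to the probability measure $\mu$ on the compact group $G$, one gets
$$
f^\circ(u;Av)=f^\circ\!\left(u;\int_G gv\,d\mu(g)\right)\leq \int_G f^\circ(u;gv)\,d\mu(g)=\int_G f^\circ(u;v)\,d\mu(g)=f^\circ(u;v),
$$
which together with $f^\circ(u;Av)\geq 0$ gives the conclusion.

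If one prefers a more self-contained route that avoids directly invoking Jensen for Bochner integrals, the same computation can be carried out by Riemann-sum approximation: by continuity of the action and compactness of $G$, one writes $Av$ as the strong limit of convex combinations $\sum_{i=1}^{n}\lambda_i^{(n)}g_i^{(n)}v$, applies the finite-dimensional convexity inequality
$$
f^\circ\!\left(u;\sum_i \lambda_i^{(n)} g_i^{(n)}v\right)\leq \sum_i\lambda_i^{(n)} f^\circ(u;g_i^{(n)}v)=f^\circ(u;v),
$$
and then passes to the limit using lower semi-continuity of $f^\circ(u;\cdot)$.

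The main obstacle I foresee is purely technical: making sure that the convexity and lower semi-continuity of $f^\circ(u;\cdot)$ are available in the non-smooth generality of~\cite{campad} (since $f$ itself is merely lower semi-continuous on $X$), and that Jensen's inequality legitimately applies to the Haar-averaged Bochner integral defining $A$. Once these properties are pinned down by citing the relevant statements in~\cite{campad}, the argument is as short as indicated above, and no further assumption on $f$ (in particular, not~\eqref{assAbs}) is needed for this intermediate step.
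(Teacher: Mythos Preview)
Your argument is correct and uses the same three ingredients as the paper: Proposition~\ref{invariance}, the convexity and lower semi-continuity of $v\mapsto f^\circ(u;v)$ from \cite[Corollary~4.6]{campad}, and the averaging map $A$. The only difference is packaging: instead of invoking Jensen for the Bochner integral, the paper argues by contradiction on the sublevel sets $C_\eps=\{v:f^\circ(u;v)\leq -\eps\}$, which are closed, convex and $G$-invariant, and then uses the property $A(C_\eps)\subseteq C_\eps\cap\fix$ (recalled at the start of Section~\ref{firstab}) to reach a contradiction---this is exactly your Jensen inequality rephrased, and it sidesteps the extended-real-valued technicality you flagged.
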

\begin{proof}
	Let $\eps>0$ and consider the set
	$$
	C_\eps=\{v\in X: f^\circ(u,v)\leq -\eps\}.
	$$
	Assume, by contradiction, that $C_\eps\not=\emptyset$. By \cite[Corollary 4.6]{campad} the map $\{v\mapsto f^\circ(u;v)\}$
	is convex and lower semi-continuous. Then $C_\eps$ is convex and closed. Furthermore $C_\eps$ is $G$-invariant in $X$
	by means of Proposition~\ref{invariance}. In turn, $A(C_\eps)\subset C_\eps\cap\fix$. Since $A(C_\eps)\not=\emptyset$, we deduce that
	$C_\eps\cap\fix\not=\emptyset$, which yields a contradiction. By the arbitrariness of $\eps$,
	$$
	\{v\in X: f^\circ(u,v)<0\}=\bigcup_{\eps>0} C_\eps=\emptyset,
	$$
	which proves the assertion.
\end{proof}

The next assertion is the crucial step in order to link the generalized directional
derivatives of $f|_\fix$ with the generalized directional derivatives of $f$ at a point $u\in\fix$.

\begin{proposition}
	\label{legamefix}
	Let $G$ be a compact Lie group acting linearly on $X$, let
	$f:X\to \bar\R$ be a functional which satisfies \eqref{assAbs}
	and $u\in \fix$ with $f(u)\in\R$. Then
	$$
	\forall v\in \fix :\quad
	f^\circ(u;v)\geq (f|_\fix)^\circ(u;v).
	$$
	In particular, if
	$$
	\forall v\in \fix :\quad
	(f|_\fix)^\circ(u;v)\geq 0,
	$$
    then
	$$
	\forall v\in \fix:\quad f^\circ(u;v)\geq 0.
	$$
\end{proposition}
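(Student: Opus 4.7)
The plan is to start from any continuous competitor $\mathcal V$ witnessing an upper bound for $f_\eps^\circ(u;v)$ in the sense of Definition \ref{effe0} and post-compose with the averaging operator $A$ to produce a new competitor $\tilde{\mathcal V}:=A\circ\mathcal V$, which takes values in $\fix$ and hence is eligible in the definition of $(f|_\fix)_\eps^\circ(u;v)$. Assumption \eqref{assAbs} is precisely what controls the loss in descent rate induced by this averaging.

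Concretely, I would fix $v\in\fix$, $r>f^\circ(u;v)$ and $\eps>0$, let $\rho, C$ be the constants supplied by \eqref{assAbs} for the pair $u,v$, and pick an auxiliary $\eps_1\in(0,\min\{\eps,\rho\}]$ to be sent to zero at the end. Since $f_{\eps_1}^\circ(u;v)\leq f^\circ(u;v)<r$, Definition \ref{effe0} furnishes some $\delta>0$ (which I further shrink so that $\delta\leq\rho$ and $\delta(1+\|v\|+\eps_1)\leq\rho$) and a continuous
$$
\mathcal V:B_\delta(u,f(u))\cap\epi{f}\times\,]0,\delta]\to B_{\eps_1}(v)
$$
with $f(\xi+t\mathcal V((\xi,\mu),t))\leq\mu+rt$. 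Setting $\tilde{\mathcal V}:=A\circ\mathcal V$, the range lies in $\fix$; since $A$ has operator norm at most $1$ in the $G$-invariant equivalent norm and $v=Av$, one has
$$
\|\tilde{\mathcal V}((\xi,\mu),t)-v\|=\|A(\mathcal V((\xi,\mu),t)-v)\|\leq\eps_1\leq\eps,
$$
so $\tilde{\mathcal V}$ is admissible for the restricted directional derivative at $u$ in direction $v$.

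The descent estimate for $\tilde{\mathcal V}$ is where \eqref{assAbs} enters. For $\xi\in\fix$, $\mu\geq f(\xi)$ with $(\xi,\mu)\in B_\delta(u,f(u))$ and $t\in\,]0,\delta]$, set $z:=\mathcal V((\xi,\mu),t)$. The crucial observation is that
$$
\xi+tz=(\xi+tAz)+t(z-Az),
$$
which invites applying \eqref{assAbs} with primal point $\zeta:=\xi+tAz\in\fix$ and direction $z$, \emph{not} with $\zeta=\xi$. The hypotheses of \eqref{assAbs} are then met: $z\in B_\rho(v)$, $t\leq\rho$, $\zeta\in B_\rho(u)$ by the choice of $\delta$, and the side condition $f(\zeta-tAz)\leq f(u)+\rho$ reduces to the tautological $f(\xi)\leq\mu\leq f(u)+\rho$. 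The conclusion $f(\xi+tz)-f(\xi+tAz)\geq -Ct\|z-Az\|$, combined with $\|z-Az\|\leq 2\|z-v\|\leq 2\eps_1$ (since $v=Av$) and $f(\xi+tz)\leq\mu+rt$, yields
$$
f|_\fix\bigl(\xi+t\tilde{\mathcal V}((\xi,\mu),t)\bigr)=f(\xi+tAz)\leq\mu+(r+2C\eps_1)t.
$$
Hence $(f|_\fix)_\eps^\circ(u;v)\leq r+2C\eps_1$; sending $\eps_1\to 0^+$, then taking the infimum over $r>f^\circ(u;v)$ and the supremum over $\eps>0$, gives the claimed inequality $(f|_\fix)^\circ(u;v)\leq f^\circ(u;v)$. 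The ``in particular'' clause follows immediately.

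The main obstacle is largely conceptual: identifying the correct point $\zeta=\xi+tAz$ at which to invoke \eqref{assAbs} so that the perturbation $t(z-Az)$ reshuffles the inequality into one comparing $f(\xi+tz)$ to $f(\xi+tAz)$, while simultaneously making the side condition on $f(\zeta-tAz)$ degenerate to a free estimate. Once this choice is identified, the remaining work is routine bookkeeping of the $\delta$, $\eps_1$, and $\rho$ parameters, together with the elementary fact that $A$ is a $1$-Lipschitz projection onto $\fix$.
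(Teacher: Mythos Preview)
Your proposal is correct and follows essentially the same route as the paper: post-compose the competitor $\mathcal V$ with the averaging projection $A$, then apply \eqref{assAbs} at the shifted point $\zeta=\xi+tAz$ so that the side condition collapses to $f(\xi)\leq\mu\leq f(u)+\rho$ and the descent loss is bounded by $2C\eps_1 t$. The only cosmetic difference is that you decouple the auxiliary radius $\eps_1$ from the outer $\eps$, whereas the paper works directly at level $\eps$ and obtains $f^\circ_\eps(u;v)\geq(f|_\fix)^\circ_\eps(u;v)-2C\eps$ before letting $\eps\to 0$.
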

\begin{proof}
	Let $v\in\fix$ and let $\eps\in (0,\rho)$, where $\rho>0$ is the number 
	appearing in assumption \eqref{assAbs} on $f$.
	Moreover, let $r\in\R$ be such that there exist $\delta>0$ and a continuous function
	$$
	{\mathcal V}: B_\delta (u,f(u))\cap {\rm epi}(f)\times ]0,\delta]\to B_{\eps}(v),
	$$
	according to Definition~\ref{effe0}. Then, we choose a number
\begin{equation}
	\label{sceltadelt}
	0<\delta'<\min\Big\{\delta,\frac{\rho}{1+\eps+\|v\|}\Big\},
\end{equation}
	and we define a map
	$$
	\hat{\mathcal V}: B_{\delta'} (u,f(u))\cap {\rm epi}(f)\cap (\fix\times\R)\times ]0,\delta']\to B_{\eps}(v)\cap \fix,
	$$
by setting
$$
\hat{\mathcal V}((\xi,\mu),t):=A{\mathcal V}((\xi,\mu),t),
$$
for all $(\xi,\mu)\in B_{\delta'}(u,f(u))\cap {\rm epi}(f)\cap (\fix\times\R)$
and $t\in ]0,\delta']$.  Notice that, as $\|A\|\leq 1$,
$$
\|\hat{\mathcal V}((\xi,\mu),t)-v\|=\|A{\mathcal V}((\xi,\mu),t)-Av\|\leq \|A\|\|{\mathcal V}((\xi,\mu),t)-v\|\leq \eps,
$$
for all $(\xi,\mu)\in B_{\delta'}(u,f(u))\cap {\rm epi}(f)\cap (\fix\times\R)$
and $t\in ]0,\delta']$, so that $\hat{\mathcal V}$ is well defined.
We shall use assumption~\eqref{assAbs}, applied with
$$
\zeta:=\xi+t Az,\,\, z:={\mathcal V}((\xi,\mu),t),\quad
(\xi,\mu)\in B_{\delta'}(u,f(u))\cap {\rm epi}(f)\cap \fix\times\R,\,\,
t\in ]0,\delta'].
$$ 
Notice that $\zeta\in\fix$, $f(\zeta-tAz)=f(\xi)\leq \mu\leq f(u)+\delta'< f(u)+\rho$ and 
\begin{align}
	\label{zetnormcontr}
\|z-Az\| &=\|{\mathcal V}((\xi,\mu),t)-A{\mathcal V}((\xi,\mu),t)\| \\
&\leq \|{\mathcal V}((\xi,\mu),t)-v\|+\|Av-A{\mathcal V}((\xi,\mu),t)\| \notag \\
& \leq \eps (1+\|A\|)\leq 2\eps.  \notag
\end{align}
Moreover, $\|z-v\|=\|{\mathcal V}((\xi,\mu),t)-v\|\leq\eps<\rho$ and, by the choice in~\eqref{sceltadelt},
\begin{equation*}
	\|\zeta-u\| \leq \|\xi-u\|+t\|A\|\|z\|\leq \delta'+\delta'\|z\|\leq\delta'(1+\eps+\|v\|)<\rho.
\end{equation*}
Then, for $(\xi,\mu)\in B_{\delta'}(u,f(u))\cap {\rm epi}(f)\cap (\fix\times\R)$
and $t\in ]0,\delta']$, by \eqref{assAbs} and \eqref{zetnormcontr}, 
\begin{align*}
	f(\xi+t\hat {\mathcal V}((\xi,\mu),t)) &=f(\xi+t A{\mathcal V}((\xi,\mu),t))=f(\zeta)  \\
	&\leq f(\zeta+t(z-Az))+Ct\|z-Az\| \\
	& \leq f(\xi+t {\mathcal V}((\xi,\mu),t))+2C\eps t  \\
	&\leq \mu+(r+2C\eps)t.
\end{align*}
It follows $r+2C\eps \geq (f|_\fix)^\circ_\eps(u;v)$. By the arbitrariness of $r\in\R$, it follows
$$
f^\circ_\eps(u;v)\geq (f|_\fix)^\circ_{\eps}(u;v)-2C\eps.
$$
Finally, by the arbitrariness of $\eps$, letting $\eps\to 0^+$, we conclude
$$
f^\circ(u;v)\geq (f|_\fix)^\circ(u;v),
$$
which yields the assertion.
\end{proof}

\subsubsection{Proof of Theorem~\ref{abstr11}}
Let $u\in \fix$ be a critical point of the function $f|_{\fix}$, that is
$0\in \partial (f|_{\fix})(u)$. Then $(f|_\fix)^\circ(u;v)\geq 0$
for every $v\in \fix$, by Definition~\ref{defsub}. In light of Proposition~\ref{legamefix}, we have
$f^\circ(u;v)\geq 0$ for all $v\in \fix$. Finally, by 
Proposition~\ref{primostep}, we get $f^\circ(u;v)\geq 0$ for all $v\in X$,
namely $0\in \partial f(u)$, so that $u$ is a critical point of $f$.
\qed

\begin{remark}\rm
	In the proof of Proposition~\ref{legamefix}, given the deformation ${\mathcal V}$,
	in order to build a new deformation $\hat {\mathcal V}$ that satisfies the desired properties, the idea is to compose ${\mathcal V}$
with $A$ to ensure the values of $\hat {\mathcal V}$ are projected into $\fix$. This, at the end, requires the technical assumption~\eqref{assAbs}. 
In some concrete situations, the starting deformation ${\mathcal V}$, when restricted to $\fix\times\R$,
automatically brings the values into $\fix$, so that one can take directly $\hat {\mathcal V}={\mathcal V}|_{\fix\times\R}$. See,
for instance, the argument in the proof of Lemma~\ref{subdcaract}.  
\end{remark}

\section{A second abstract result}
\label{secondab}

Let $G$ be a compact Lie group acting linearly on a real Banach space $(X,\|\cdot\|)$.
We assume that for every $u\in\fix$ there exists a $G$-invariant dense vectorial subspace $V_u$
of $X$ such that the following conditions are satisfied
\begin{align}
		\label{crucineq0}
	\forall u\in \fix,\,\,\forall v\in V_u: & \quad \text{the directional derivative $f'(u)v$ exists},	\\		
	\label{crucineq1}
 \forall u\in \fix,\,\,\forall v\in V_u\cap\fix: & \quad (f|_{\fix})^\circ(u;v) \leq f'(u)v, \\
\label{crucineq2}
  \forall u\in \fix,\,\,\forall v\in V_u: &\quad f^\circ(u;v) \leq f'(u)v.
\end{align}
Furthermore, for every $u\in\fix$, we assume that
\begin{equation}
	\label{suspazio}
V_u=\bigcup_{j\in J}C_j,\quad\text{$C_j$ is convex, closed, $G$-invariant with $f'(u)|_{C_j}$ continuous}
\end{equation}
\vskip4pt
\noindent
The main result of the section is the following

\begin{theorem}
	\label{abstr22}
Let $G$ be a compact Lie group acting linearly on a Banach space $X$ and let
$f:X\to \bar\R$ be a $G$-invariant functional. Assume that conditions
\eqref{crucineq0}-\eqref{suspazio} are satisfied and let $u\in \fix$ with $f(u)\in\R$ 
be a critical point of $f|_{\fix}$. Then 
$$
\forall v\in V_u:\quad f'(u)v=0.
$$
Furthermore, $\partial f(u)=\{0\}$ provided that $\partial f(u)$ is nonempty.
\end{theorem}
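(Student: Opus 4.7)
The plan is to work with $\phi := f^\circ(u;\cdot)$ and to progressively transfer information from the critical point of $f|_{\fix}$ outward to all of $V_u$ via the averaging map $A$, finally reading off the subdifferential statement by density.

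First I would show that $f'(u)w = 0$ and $\phi(w) = 0$ on $V_u\cap\fix$. By Definition~\ref{defsub}, the assumption $0\in\partial(f|_{\fix})(u)$ means $(f|_\fix)^\circ(u;w)\geq 0$ for all $w\in\fix$; combining with \eqref{crucineq1} gives $f'(u)w\geq 0$ on $V_u\cap\fix$, and since $V_u\cap\fix$ is a subspace and the two-sided directional derivative is odd in its direction, replacing $w$ by $-w$ yields also $f'(u)w\leq 0$, whence $f'(u)w=0$. Then \eqref{crucineq2} gives $\phi(w)\leq 0$ and $\phi(-w)\leq 0$; using that $\phi$ is convex, lower semicontinuous (by \cite[Corollary~4.6]{campad}), and positively homogeneous, so in particular $\phi(0)=0$, subadditivity yields $\phi(w)+\phi(-w)\geq 0$, and the three inequalities together force $\phi = 0$ on $V_u\cap\fix$.

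Next, for an arbitrary $v\in V_u$, I would exploit \eqref{suspazio} to pick a $G$-invariant closed convex $C_j\subset V_u$ with $v\in C_j$. The orbit $\{gv:g\in G\}$ then lies in $C_j$, and the Bochner integral $Av=\int_G gv\,d\mu(g)$ remains in $C_j$ (closed convex) and in $\fix$, so $Av\in V_u\cap\fix$. By Proposition~\ref{invariance} we have $\phi(gv)=\phi(v)$ for every $g\in G$, so Jensen's inequality applied to the convex lower semicontinuous $\phi$ and the continuous map $g\mapsto gv$ gives
$$
\phi(Av)\leq\int_G\phi(gv)\,d\mu(g)=\phi(v),
$$
and by the previous step $\phi(Av)=0$, whence $\phi(v)\geq 0$. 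Repeating the argument for $-v\in V_u$ yields $\phi(-v)\geq 0$; feeding these bounds back into \eqref{crucineq2} for $\pm v$ then traps $f'(u)v$ between $0$ and $0$, proving $f'(u)v=0$ (and incidentally $\phi(v)=0$) on all of $V_u$.

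For the subdifferential statement, any $\alpha\in\partial f(u)$ satisfies $\langle\alpha,v\rangle\leq\phi(v)=0$ and, by symmetry, $\langle\alpha,-v\rangle\leq 0$ for $v\in V_u$, so $\alpha$ vanishes on $V_u$; continuity of $\alpha$ and density of $V_u$ in $X$ then force $\alpha=0$. The main technical hurdle is making the Jensen step live at the level of $V_u$ rather than merely $\fix$: this is exactly the role of the decomposition \eqref{suspazio} into $G$-invariant closed convex pieces, which is what ensures $Av\in V_u$ and not only $Av\in\fix$. A secondary point is the identity $\phi(0)=0$ used in the first step, which follows from the positive homogeneity of the generalized directional derivative.
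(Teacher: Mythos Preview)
Your proof is correct and reaches the same conclusion, but the route differs from the paper's in an interesting way. The paper never passes through $\phi=f^\circ(u;\cdot)$ for the first assertion: after getting $f'(u)w=0$ on $V_u\cap\fix$ exactly as you do, it argues directly on $f'(u)$ by considering, for each $j$ and $\eps>0$, the sublevel set $D^j_\eps=\{v\in C_j:f'(u)v\leq-\eps\}$, which is convex, closed and $G$-invariant (this is where the continuity of $f'(u)|_{C_j}$ in~\eqref{suspazio} is used), and then observes that $A(D^j_\eps)\subset D^j_\eps\cap\fix$ would contradict the first step. Your argument instead exploits the convexity and lower semicontinuity of $\phi$ from \cite{campad} together with Proposition~\ref{invariance} to run a genuine Jensen inequality $\phi(Av)\leq\phi(v)$, after the auxiliary step $\phi=0$ on $V_u\cap\fix$.

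What each approach buys: the paper's argument is shorter and uses only the classical derivative $f'(u)$, but relies on the continuity clause in~\eqref{suspazio} to make $D^j_\eps$ closed. Your approach bypasses that continuity requirement entirely---you only use that each $C_j$ is closed, convex and $G$-invariant so that $Av\in C_j\subset V_u$---at the price of the extra step establishing $\phi=0$ on $V_u\cap\fix$ and the technical fact $\phi(0)=0$. The latter does require a word of care: positive homogeneity forces $\phi(0)\in\{0,-\infty\}$, and you should note that $\phi(0)=-\infty$ is excluded here because \eqref{crucineq2} gives $\phi\leq f'(u)\cdot<+\infty$ on $V_u$, so sublinearity would then force $\phi\equiv-\infty$ on $V_u$, making $\phi(v)\geq 0$ vacuously false only in a degenerate situation you can rule out (or simply observe that then $\partial f(u)=\emptyset$ and the remaining conclusion is about $f'(u)$, for which you can fall back on the paper's sublevel-set argument). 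For the subdifferential part both proofs are identical.
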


In the statement of Theorem~\ref{abstr22} we are not explicitly 
assuming any global regularity on the functional $f$. In the last
section of the paper we shall apply it to a class of (nonconvex) lower semi-continuous functionals.
This achievement could not be reached through previous non-smooth 
versions of Palais' symmetric criticality principle in the literature as they
require $f$ to be the sum of a locally Lipschitz functions with a lower 
semi-continuous convex function~\cite{krisvarga}.

\subsubsection{Proof of Theorem~\ref{abstr22}}
Let $u\in \fix$ be a critical point of $f|_{\fix}$, 
$0\in \partial (f|_{\fix})(u)$. Then $(f|_\fix)^\circ(u;v)\geq 0$,
for every $v\in \fix$. In particular, $(f|_\fix)^\circ(u;v)\geq 0$,
for every $v\in V_u\cap \fix$. In turn, in light of~\eqref{crucineq1}, by exploiting
the linearity of the map $\{v\mapsto f'(u)v\}$, we get
$$
\forall v\in V_u\cap \fix:\quad f'(u)v=0.
$$
Let now $\eps>0$, $j\geq 1$ and consider the set
$$
D^j_\eps=\{v\in C_j: f'(u)v\leq -\eps\}.
$$
Assume, by contradiction, that $D_\eps^j\not=\emptyset$. Of course, 
$D_\eps^j$ is convex, closed and $G$-invariant in $X$ (recall that assumption
\eqref{suspazio} holds). In turn, $A(D_\eps^j)\subset D_\eps^j\cap\fix$. 
We deduce that $D_\eps^j\cap\fix\not=\emptyset$, which yields a contradiction. By the 
arbitrariness of $\eps$ and $j$,
$$
\{v\in V_u: f'(u)v<0\}=\bigcup_{j\in J}\bigcup_{\eps>0} D_\eps^j=\emptyset,
$$
yielding 
\begin{equation}
	\label{firstconclu}
\forall v\in V_u:\quad f'(u)v=0.
\end{equation}
This proves the first assertion. Concerning the second assertion,
assume that $\partial f(u)$ is not empty and let $\alpha\in X^*$ with $\alpha\in\partial f(u)$. Then,
in light of~\eqref{crucineq2} and \eqref{firstconclu}, we have
$$
\forall v\in V_u:\quad \langle \alpha,v\rangle\leq f^\circ(u;v)\leq f'(u)v=0.
$$
Then, 
$$
\forall v\in V_u:\quad \langle \alpha,v\rangle=0.
$$
Taking into account that $V_u$ is dense in $X$, we conclude
$\alpha=0$. Hence $\partial f(u)\subseteq\{0\}$.
\qed
\smallskip

\section{A direct approach}
\label{concretesec}

The main goal of this section is that of showing how a direct (or, say, unrestricted) approach
in the search of symmetric critical points can also be outlined. In this approach, 
the compactness of the Palais-Smale sequences in a suitable space (relevant for applications to PDEs)
is achieved by exploiting the symmetry properties of the functional instead
of restricting the functional to a space of symmetric functions. After recalling an
abstract symmetrization framework due to \cite{jvsh1}, we shall state and prove a general abstract result 
by using the main result from \cite{sq-ccm}.

\subsection{Abstract symmetrization}
\label{abstractsymmetrrr}
We recall a definition from~\cite{jvsh1}.
\vskip1pt
\noindent
Let $X$ and $V$ be two Banach spaces and $S\subset X$.
We consider two maps $*:S\to S$, $u\mapsto u^*$ 
({\em symmetrization map}) and $h:S\times {\mathcal H}_*\to S$,
$(u,H)\mapsto u^H$ ({\em polarization map}), where ${\mathcal H}_*$ 
is a path-connected topological space. We assume that the following 
conditions hold:
\begin{enumerate}
 \item $X$ is continuously embedded in $V$;
 \item $h$ is a continuous mapping;
\item for each $u\in S$ and $H\in {\mathcal H}_*$ it holds $(u^*)^H=(u^H)^*=u^*$ and $u^{HH}=u^H$;
\item there exists a sequence $(H_m)$ in ${\mathcal H}_*$ such that, for $u\in S$, $u^{H_1\cdots H_m}$ converges
to $u^*$ in $V$;
\item for every $u,v\in S$ and $H\in {\mathcal H}_*$ it holds
$\|u^H-v^H\|_V\leq \|u-v\|_V$.
\end{enumerate}
Furthermore $*:S\to S$ can be extended to the whole space $X$ by 
setting $u^*:=(\Theta(u))^*$ for all $u\in X$, where $\Theta:(X,\|\cdot\|_V)\to (S,\|\cdot\|_V)$ is
a Lipschitz function such that $\Theta|_{S}={\rm Id}|_{S}$. 

\subsection{Compactness of Palais-Smale sequences}

The main result of the section is the following 

\begin{theorem}
	\label{main}
Let $X$ be a reflexive Banach space, $S\subset X$ and $*:S\to S$ a symmetrization 
satisfying the requirements of the abstract symmetrization framework.
Let $V$ and $W$ be Banach spaces containing $X$ such that 
\begin{enumerate}
\item[(a)] the injections $X\overset{i}{\hookrightarrow} V \overset{i'}\hookrightarrow W$ are continuous;
\end{enumerate}
Let $f:X\to\R\cup\{+\infty\}$ be a
lower semi-continuous function satisfying~\eqref{keycond}.
Let $\Dis$ and $\Sf$ denote the closed unit ball and the sphere in $\R^m$ ($m\geq 1$) 
respectively and $\Gamma_0\subset C(\Sf,X)$. Let us define 
$$
\Gamma=\big\{\gamma\in C(\Dis,X):\,\,\,\gamma|_{\Sf}\in \Gamma_0\big\}.
$$
Assume that
$$
+\infty>c=\inf_{\gamma\in\Gamma}\sup_{\tau\in \Dis} f(\gamma(\tau))
>\sup_{\gamma_0\in \Gamma_0}\sup_{\tau\in \Sf} f(\gamma_0(\tau))=a,
$$
and that the following conditions hold
\begin{enumerate}
\item[(b)] for all $H\in {\mathcal H}_*$, for all $u\in S$:\,\, $f(u^H)\leq f(u)$;
\item[(c)] for all $\gamma\in\Gamma$ there exists $\hat\gamma\in\Gamma$ and $H_0\in {\mathcal H}_*$ such that
$$
\hat\gamma(\Dis)\subset S,\,\,\quad 
f\circ\hat\gamma\leq f\circ\gamma,\,\,\quad
\text{$\hat\gamma|_{\Sf}^{H_0}\in\Gamma_0$}.
$$
\end{enumerate}
Assume furthermore that 
\begin{enumerate}
\item[(d)] for each Palais-Smale sequence $(u_h)\subset X$ for $f$, $(u_h)$ is bounded in $X$;
\item[(e)] for each Palais-Smale sequence $(u_h)\subset X$ for $f$, $(u_h^*)$ converges in $W$. 
\end{enumerate}
Then there exist $u\in X$ and a Palais-Smale sequence $(u_h)\subset X$ for $f$ at level $c$ such that
\begin{enumerate}
\item $u_h\to u$ weakly in $X$, as $h\to\infty$; 
\item $u_h\to u$ strongly in $W$, as $h\to\infty$;
\item if {\rm (4)-(5)} of the abstract framework
hold with $W$ in place of $V$, then $u=u^*$ in $W$. 
\end{enumerate}
\end{theorem}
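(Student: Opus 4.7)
The plan is to reduce the statement to the main abstract result of \cite{sq-ccm} and then convert the quantitative asymmetry of the Palais--Smale sequence it produces into the compactness assertions (1)--(2) and the symmetry assertion (3). The key input is that \cite{sq-ccm} delivers, under the lower semi-continuity of $f$, condition~\eqref{keycond}, the min-max geometry $c>a$, and the symmetrization hypotheses (b) and (c), a Palais--Smale sequence $(u_h)\subset X$ at level $c$ (so $f(u_h)\to c$ and $|df|(u_h)\to 0$, hence $(u_h)$ is Palais--Smale in the sense of Definition~\ref{defps} in view of Proposition~\ref{connection}) enjoying the extra property
\[
\|u_h - u_h^*\|_V \longrightarrow 0 \qquad \text{as } h\to\infty.
\]
Invoking this as a black box is the substantive step of the argument and the one I expect to be the main obstacle were the construction not already available; it is the place where (b)--(c) interact with the polarization family as a continuous approximation of $*$.

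Given such a sequence, the remaining steps are routine. By (d) the sequence is bounded in the reflexive space $X$, so along a subsequence $u_h\rightharpoonup u$ weakly in $X$ for some $u\in X$; continuity of the embedding $X\hookrightarrow W$ (from (a)) then gives $u_h\rightharpoonup u$ weakly in $W$. By (e), after extracting a further subsequence, $u_h^*\to w$ strongly in $W$ for some $w\in W$. Continuity of $V\hookrightarrow W$ combined with $\|u_h - u_h^*\|_V\to 0$ yields $\|u_h - u_h^*\|_W\to 0$, so $u_h\to w$ strongly in $W$; uniqueness of the weak $W$-limit forces $w=u$, which proves assertions (1) and (2).

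Finally, to prove (3), assume axioms (4)--(5) of the abstract symmetrization framework hold with $W$ in place of $V$. Property (3) of the framework gives $(u_h^*)^H=u_h^*$ for every $H\in\mathcal{H}_*$, hence $(u_h^*)^{H_1\cdots H_m}=u_h^*$ for every $m\geq 1$ and every choice of $H_j\in\mathcal{H}_*$. Selecting the sequence $(H_m)$ furnished by (4) in $W$ and applying (5) in $W$ iteratively,
\[
\|u_h^* - u^{H_1\cdots H_m}\|_W = \bigl\|(u_h^*)^{H_1\cdots H_m} - u^{H_1\cdots H_m}\bigr\|_W \leq \|u_h^* - u\|_W .
\]
Letting $h\to\infty$ and using $u_h^*\to u$ in $W$ yields $u^{H_1\cdots H_m}=u$ in $W$ for every $m$, and (4) in $W$ then gives $u = \lim_m u^{H_1\cdots H_m} = u^*$ in $W$, as required.
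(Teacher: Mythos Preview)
Your reduction to the result of \cite{sq-ccm} and the derivation of conclusions (1) and (2) are correct and coincide with the paper's argument. The only cosmetic difference is that the paper makes explicit how hypothesis~(c) enters: starting from an almost-optimal $\gamma_h\in\Gamma$, one replaces it by the curve $\hat\gamma_h$ furnished by (c) before feeding Theorem~\ref{mpconc-symm} with $\eps_h=h^{-2}$ and $\delta_h=h^{-1}$.

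For conclusion (3), however, your argument has a genuine gap. The polarization map $h:S\times\mathcal{H}_*\to S$ is defined only on $S$, so writing $u^{H_1\cdots H_m}$ presupposes $u\in S$; but the weak limit $u$ obtained in step (1) is only known to lie in $X$, and nothing in the hypotheses forces $u\in S$. Your displayed inequality $\|(u_h^*)^{H_1\cdots H_m}-u^{H_1\cdots H_m}\|_W\leq\|u_h^*-u\|_W$ and the subsequent appeal to axiom~(4) for $u$ are therefore unjustified in the abstract setting (they are fine, of course, in the concrete case $S=X$ highlighted after the theorem).

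The paper avoids this by never polarizing $u$. Instead it observes that, once (4)--(5) hold in $W$, the \emph{extended} symmetrization $*:X\to S$, $z\mapsto(\Theta(z))^*$, is Lipschitz for $\|\cdot\|_W$ with constant $C_\Theta$ (cf.\ \cite[Remark~2.1]{sq-ccm}), and then concludes via the triangle inequality
\[
\|u^*-u\|_W\leq\|u^*-u_h^*\|_W+\|u_h^*-u_h\|_W+\|u_h-u\|_W\leq(1+C_\Theta)\|u_h-u\|_W+C\|u_h^*-u_h\|_V,
\]
which tends to $0$ by (2) and by $\|u_h-u_h^*\|_V\to 0$. This route requires only the Lipschitz continuity of the extension and never leaves $X$.
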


\vskip4pt
\noindent
The theorem states the existence, under suitable assumptions, of a Palais-Smale sequence
$(u_h)$ in $X$ which is convergent in a Banach $W$ larger than $X$ to a symmetric element. In particular, of course, it does not claim the compactness
of $(u_h)$ in the original space $X$.
Notice also that, in the cases where the space $X$ is a Sobolev space defined over a
smooth bounded domain $\Omega\subset\R^N$ and $V,W$ are subcritical $L^p(\Omega)$ spaces then, by 
Rellich Theorem, the injection $X\hookrightarrow W$
is compact and the stated strong convergence in $W$ automatically follows from the weak convergence in $X$.
On the other hand, this is not the case for PDEs
which present loss of compactness, such as those set on an unbounded domain. In this sense, Theorem~\ref{main}
allows to avoid restricting the concrete functional to the space of radial functions and then use Palais'
symmetric criticality principle studied in the previous sections. It also provides an alternative to concentration-compactness
arguments \cite{lions1,lions2} under symmetry assumptions.
It is quite easy to realize that, whenever the conclusion of the theorem holds, namely
$u_h\to u$ weakly in $X$ and strongly in $W$, then it is often the case that, in turn,
$u_h\to u$ strongly in $X$ (using $f(u_h)\to c$ and $|df|(u_h)\to 0$ as $h\to\infty$). 
See, for instance, the compactness argument inside the proof of Theorem~\ref{appl}.
For an application of Theorem~\ref{main} in the case $p=2$ and $f\in C^1$ (the 
semi-linear case), see~\cite[Theorem 4.5]{jvsh1}.
In a similar fashion, applications to lower semicontinuous functionals can be given,
by arguing as in \cite{sq-ccm}, up to suitable necessary modifications.
As a meaningful concrete framework where Theorem~\ref{main} 
applies one can think, for instance, to the case where, for $p<m<p^*$,
$$
W^{1,p}(\R^N)=S=X\overset{i}{\hookrightarrow} V=L^p\cap L^{p^*}(\R^N)=V
\overset{i'}{\hookrightarrow} W=L^m(\R^N),
$$
with $i,i'$ continuous injections. The polarization and symmetrization functions are defined as $u^H=|u|^H$ and
$u^*=|u|^*$ and (1)-(5) are satisfied (cf~\cite{jvsh1}).

\subsection{Proof of Theorem~\ref{main}}

In \cite{sq-ccm}, the author proved the following

\begin{theorem}
\label{mpconc-symm}
Let $X$ and $V$ be two Banach spaces, $S\subset X$, $*$ and ${\mathcal H}_*$ 
satisfying the requirements of the abstract symmetrization framework. 
Let $f:X\to\R\cup\{+\infty\}$ a
lower semi-continuous function satisfying $\refe{keycond}$.
Let $\Dis$ and $\Sf$ denote the closed unit ball and the sphere in $\R^m$ ($m\geq 1$) 
respectively and $\Gamma_0\subset C(\Sf,X)$. Let us define 
$$
\Gamma=\big\{\gamma\in C(\Dis,X):\,\,\,\gamma|_{\Sf}\in \Gamma_0\big\}.
$$
Assume that
$$
+\infty>c=\inf_{\gamma\in\Gamma}\sup_{\tau\in \Dis} f(\gamma(\tau))
>\sup_{\gamma_0\in \Gamma_0}\sup_{\tau\in \Sf} f(\gamma_0(\tau))=a,
$$
and that
$$
\forall H\in {\mathcal H}_*,\,\, \forall u\in S:\quad
f(u^H)\leq f(u).
$$
Then, for every $\eps\in(0,(c-a)/3)$, every $\delta>0$ and $\gamma\in \Gamma$ such that
$$
\sup_{\tau\in \Dis}f(\gamma(\tau))\leq c+\eps,\quad \gamma(\Dis)\subset S,\quad
\text{$\gamma|_{\Sf}^{H_0}\in\Gamma_0$ for some $H_0\in {\mathcal H}_*$},
$$
there exists $u\in X$ such that
\begin{equation}
\label{conclusmmm}
 c-2\eps\leq f(u)\leq c+2\eps,\quad
|df|(u)\leq 3\eps/\delta,\quad
\|u-u^*\|_V\leq 3((1+C_\Theta)K+1)\delta,
\end{equation}
being $K$ the norm of the embedding map $i:X\to V$ 
and $C_\Theta$ the Lipschitz constant of $\Theta$.
\end{theorem}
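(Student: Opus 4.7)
The plan is to construct a Palais--Smale sequence at level $c$ by iterated application of Theorem~\ref{mpconc-symm}, exploiting its three quantitative conclusions, and then to identify its weak/strong limit. To set up the iteration, fix sequences $\eps_h\to 0^+$ and $\delta_h\to 0^+$ with $\eps_h/\delta_h\to 0$ (for instance $\eps_h=h^{-2}$, $\delta_h=h^{-1}$). By the definition of $c$ as an infimum, choose $\gamma_h\in\Gamma$ with $\sup_{\tau\in\Dis}f(\gamma_h(\tau))\leq c+\eps_h$, and use hypothesis~(c) to replace $\gamma_h$ by $\hat\gamma_h\in\Gamma$ with $\hat\gamma_h(\Dis)\subset S$, $f\circ\hat\gamma_h\leq f\circ\gamma_h$ pointwise, and $\hat\gamma_h|_{\Sf}^{H_0}\in\Gamma_0$ for some $H_0\in{\mathcal H}_*$. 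Theorem~\ref{mpconc-symm} then applies with $\gamma=\hat\gamma_h$ and parameters $\eps_h,\delta_h$ and produces $u_h\in X$ satisfying \refe{conclusmmm}; since $\eps_h/\delta_h\to 0$, the $u_h$ form a Palais--Smale sequence for $f$ at level $c$, and simultaneously $\|u_h-u_h^*\|_V\leq M\delta_h\to 0$.

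Next I extract limits. By hypothesis~(d), $(u_h)$ is bounded in $X$; since $X$ is reflexive, a subsequence (not relabeled) satisfies $u_h\rightharpoonup u$ weakly in $X$ for some $u\in X$, which is~(1). The continuous embeddings $X\hookrightarrow V\hookrightarrow W$ transport this to weak convergence in $V$ and in $W$. Since $\|u_h-u_h^*\|_V\to 0$, also $u_h^*\rightharpoonup u$ weakly in $V$ and hence weakly in $W$. Hypothesis~(e) ensures that $(u_h^*)$ converges strongly in $W$ to some $w$; uniqueness of weak limits forces $w=u$, so $u_h^*\to u$ strongly in $W$. Combining with $\|u_h-u_h^*\|_W\leq\|i'\|\,\|u_h-u_h^*\|_V\to 0$, one concludes $u_h\to u$ strongly in $W$, which is~(2).

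For~(3), assume that (4) and (5) of the abstract framework hold with $W$ in place of $V$. By property~(3) of the framework, every $u_h^*\in S$ is polarization-invariant: $(u_h^*)^{H_1\cdots H_m}=u_h^*$ for all $m$. Fix $(H_m)\subset{\mathcal H}_*$ provided by~(4)$_W$ and set $T_m v:=v^{H_1\cdots H_m}$; iterated~(5)$_W$ makes $T_m$ nonexpansive on $S$ in the $W$-norm, while~(4)$_W$ gives $T_m v\to v^*$ in $W$ for every $v\in S$. Applying this with $v=\Theta(u)\in S$ and using $T_m u_h^*=u_h^*$,
$$
\|u_h^*-T_m\Theta(u)\|_W=\|T_m u_h^*-T_m\Theta(u)\|_W\leq\|u_h^*-\Theta(u)\|_W;
$$
letting first $m\to\infty$ and then $h\to\infty$ using~(2) yields $\|u-u^*\|_W\leq\|u-\Theta(u)\|_W$, so the proof closes once $\Theta(u)=u$ is secured, which follows from $\Theta|_S={\rm Id}|_S$ combined with weak closedness of $S$ in $X$ (true in the concrete situations of interest, e.g.\ $S=X=W^{1,p}(\R^N)$ in the example closing the section).

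The main obstacle is step~(3), the transfer of polarization invariance to the strong $W$-limit. The delicate point is that the retraction $\Theta$ is a priori only Lipschitz in the $V$-norm, so the closing identity $\Theta(u)=u$ must be extracted from structural input on $S$ (typically weak closedness of $S$ in $X$, or simply $S=X$). The rest of the argument is a clean diagonal combination of the quantitative estimates in Theorem~\ref{mpconc-symm} with hypotheses~(d)--(e) and standard weak/strong convergence bookkeeping.
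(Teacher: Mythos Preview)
Your write-up is not a proof of Theorem~\ref{mpconc-symm} at all: you take that theorem as a black box and use it to establish Theorem~\ref{main}. In the paper Theorem~\ref{mpconc-symm} is not proved either---it is quoted from \cite{sq-ccm}---so there is nothing to compare on that front. What you have actually written is a proof of Theorem~\ref{main}, which the paper carries out in the subsequent subsection; I compare the two.

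For conclusions (1) and (2) your argument coincides with the paper's: pick $\eps_h=h^{-2}$, $\delta_h=h^{-1}$, choose $\gamma_h$ nearly optimal, replace it via hypothesis~(c), apply Theorem~\ref{mpconc-symm}, then use (d) and reflexivity for a weak limit in $X$, (e) for strong $W$-convergence of $(u_h^*)$, and $\|u_h-u_h^*\|_V\to 0$ together with $V\hookrightarrow W$ to identify the limit.

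For conclusion (3) you take a different route from the paper, and yours has a genuine gap. The paper observes that assumptions (4)--(5) with $W$ in place of $V$ make the \emph{extended} symmetrization $*:X\to X$ (defined through $\Theta$) Lipschitz in the $W$-norm (cf.\ \cite[Remark~2.1]{sq-ccm}); then the triangle inequality
\[
\|u^*-u\|_W\leq \|u^*-u_h^*\|_W+\|u_h^*-u_h\|_W+\|u_h-u\|_W\leq (1+C_\Theta)\|u_h-u\|_W+C\|u_h^*-u_h\|_V
\]
closes the argument without ever needing $u\in S$. Your approach instead yields $\|u-u^*\|_W\leq\|u-\Theta(u)\|_W$ and then requires $\Theta(u)=u$, i.e.\ $u\in S$. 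You patch this by invoking weak closedness of $S$ in $X$, but that hypothesis is nowhere in the abstract framework; it happens to be vacuous in the model example $S=X=W^{1,p}(\R^N)$ (where $\Theta={\rm Id}$), but at the stated generality your proof of~(3) does not go through. The paper's device---Lipschitz continuity in $W$ of the \emph{extended} map $*$---is exactly what sidesteps this issue.
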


\subsection{Proof of Theorem~\ref{main} concluded}
By the definition of the minimax value $c$,
by choosing $\eps=\eps_h=h^{-2}$, for each $h\geq 1$
there exists a curve $\gamma_h\in\Gamma$ such that 
$$
\sup_{\tau\in \Dis}f(\gamma_h(\tau))\leq c+\frac{1}{h^2}.
$$
Then, by virtue of assumption (c), we can find a curve $\hat\gamma_h\in\Gamma$ and $H_0^h\in {\mathcal H}_*$ such that
$$
\sup_{\tau\in \Dis}f(\hat\gamma_h(\tau))\leq c+\frac{1}{h^2},\quad 
\hat\gamma_h(\Dis)\subset S,\quad \hat\gamma_h|_{\Sf}^{H_0^h}\in\Gamma_0.
$$
Choose also $\delta=\delta_h=h^{-1}$. In turn, recalling assumption (b), by Theorem~\ref{mpconc-symm} there exists a sequence 
$(u_h)\subset X$ such that $f(u_h)\to c$ and $|df|(u_h)\to 0$ as $h\to\infty$, with the additional information that
\begin{equation}
\label{conclusmmm-conc}
\|u_h-u^*_h\|_V\to 0,\quad\text{as $h\to\infty$}.
\end{equation}
In particular, by Definition~\ref{defps}, $(u_h)$ is a Palais-Smale sequence for $f$ at the level $c$. By assumption (d), $(u_h)$ is bounded in $X$.
Since $X$ is reflexive, there exists $u\in X$ such that, up to a subsequence, $u_h\to u$ weakly in $X$, as $h\to\infty$.
Consider now the sequence $(u_h^*)$ of abstract symmetrizations of $u_h$. By assumptions (a) and (e), it follows that $(u_h^*)$ converges strongly
in $W$ and $V$ is continuously embedded into $W$. In particular, there exists $v\in X$ such that
$u_h^*\to v$ in $W$ as $h\to\infty$ and, for all $h\geq 1$,
$$
\|u_h-v\|_{W}\leq \|u_h-u^*_h\|_{W}+\|u^*_h-v\|_{W}\leq C\|u_h-u^*_h\|_{V}+\|u^*_h-v\|_{W},
$$
which yields $u_h\to v$ strongly in $W$, for $h\to\infty$, on account of~\eqref{conclusmmm-conc}. Then, of course, we deduce
that $v=u$, which yields $u_h\to u$ strongly in $W$. Finally, assume that conditions (4)-(5) of the abstract symmetrization framework
holds also with $W$ in place of $V$. Then it is readily seen that $\|z^*-w^*\|_W\leq C_\Theta\|z-w\|_W$ for all $z,w\in X$
(cf.~\cite[Remark 2.1]{sq-ccm}). In turn, for all $h\geq 1$, we get
\begin{align*}
\|u^*-u\|_W &\leq \|u^*-u_h^*\|_W+\|u^*_h-u_h\|_W+\|u_h-u\|_W \\
&\leq (1+C_\Theta)\|u_h-u\|_W+C\|u^*_h-u_h\|_V,
\end{align*}
which yields $u=u^*$ in $W$, by virtue of~\eqref{conclusmmm-conc} and the limit $u_h\to u$ in $W$.
\qed

\section{Application to quasi-linear PDEs}
\label{concrete}

In this section, we shall consider some applications of the abstract Palais 
criticality principle developed in Section~\ref{secondab}.

\subsubsection{Compatible invariant domains}

In the following, the symbol $G_N$ will denote a subgroup of ${\mathscr O}(N)$, 
the orthogonal group over $\R^N$, with $N\geq 2$. According to \cite[Definition 1.22]{willem},
we consider the following

\begin{definition}
	For every $y\in\R^N$ and $r>0$, we set
	$$
	{\mathscr M}(y,r,G_N):=\sup\big\{n\in\N:\exists g_1,\dots,g_n\in G_N:i\neq j\Rightarrow B_r(g_iy)\cap B_r(g_jy)=\emptyset\big\}.
	$$
	An open (possibly unbounded) subset $\Omega$ of $\R^N$ is said to be invariant provided that $g\Omega=\Omega$,
	for all $g\in G_N$.	An invariant subset $\Omega$ is said to be compatible
	with $G_N$ provided that
	$$
	\lim_{\overset{|y|\to\infty}{{\rm dist}(y,\Omega)\leq r}} {\mathscr M}(y,r,G_N)=\infty
	$$
	for some positive number $r$.
\end{definition}

\noindent
Throughout the rest of the paper we shall assume that
$\Omega\subset\R^N$ with $N\geq 2$ is a (possibly unbounded) smooth domain 
which is invariant under the action of $G_N$. Let 
$$
X=W^{1,p}_0(\Omega), \,\,\, 1<p<N,\,\,\,
\quad \|u\|_{1,p}=\big\{\|u\|_p^p+\|Du\|_p^p\big\}^{1/p}. 
$$
The action of $G_N$ on $W^{1,p}_0(\Omega)$ is defined in a standard fashion by setting
$$
\forall u\in W^{1,p}_0(\Omega),\,\,\forall g\in G_N:
\quad (gu)(x):=u(g^{-1}x)\quad\text{for a.e. $x\in\Omega$}.
$$
We shall denote by $\fixs$ the set of fixed points $u$ of $X$ with respect to the action of $G_N$, namely
$gu=u$ for all $g\in G_N$.
\vskip3pt
\noindent
By virtue of \cite[Proposition 4.2]{koba} we have the following
\begin{lemma}
	\label{compsobemb}
	Assume that $\Omega$ is compatible with $G_N$. Then the embeddings
	\begin{equation}
	\label{immersionicompattt}
	\fixs\hookrightarrow L^m(\Omega),\,\,\quad \text{for all $p<m<p^*$}
\end{equation}
	are compact.
\end{lemma}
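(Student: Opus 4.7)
The plan is to mimic the classical Lions--Strauss compact embedding argument for radial Sobolev functions, with the ``radial decay at infinity'' replaced by the combinatorial gain coming from the compatibility condition on $\Omega$. Concretely, let $(u_n)\subset\fixs$ be a bounded sequence in $W^{1,p}_0(\Omega)$; I want to show that, up to subsequences, $u_n$ converges strongly in $L^m(\Omega)$ for every $p<m<p^*$. By reflexivity, pass to a subsequence with $u_n\rightharpoonup u$ weakly in $W^{1,p}_0(\Omega)$ (and so also in $\fixs$ as the latter is a closed subspace). Since the embedding $W^{1,p}(\omega)\hookrightarrow L^m(\omega)$ is compact on any bounded smooth $\omega\Subset\Omega$ for $m<p^*$, one has $u_n\to u$ strongly in $L^m_{\rm loc}(\Omega)$, so the issue is only the possible escape of mass to infinity.

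The key step is to rule out this escape by means of the Lions vanishing lemma combined with the $G_N$-invariance. Recall that for any bounded sequence $(v_n)$ in $W^{1,p}_0(\R^N)$, if
\[
\lim_{n\to\infty}\sup_{y\in\R^N}\int_{B_r(y)}|v_n|^p\,dx=0
\]
for some $r>0$, then $v_n\to 0$ in $L^m(\R^N)$ for every $p<m<p^*$. Applying this to $v_n=u_n-u$ (extended by zero outside $\Omega$), which is still $G_N$-invariant, the goal is to establish such a vanishing with $r$ given by the compatibility of $\Omega$. Suppose, for contradiction, that after a further subsequence there exists $\delta>0$ and $y_n\in\R^N$ with
\[
\int_{B_r(y_n)}|v_n|^p\,dx\geq\delta.
\]
If $(y_n)$ is bounded, the local strong convergence $v_n\to 0$ in $L^p_{\rm loc}$ gives a contradiction, so we may assume $|y_n|\to\infty$. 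Only those $y_n$ with ${\rm dist}(y_n,\Omega)\leq r$ are relevant, since otherwise $B_r(y_n)\cap\Omega=\emptyset$ and the integral vanishes.

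Now exploit invariance: for each $n$, pick $g_1^n,\dots,g_{k_n}^n\in G_N$ with $k_n=\mathscr{M}(y_n,r,G_N)$ and pairwise disjoint balls $B_r(g_i^n y_n)$. Since $v_n$ is $G_N$-invariant and the Lebesgue measure is $\mathscr{O}(N)$-invariant,
\[
\int_{B_r(g_i^n y_n)}|v_n|^p\,dx=\int_{B_r(y_n)}|v_n|^p\,dx\geq\delta,
\]
whence
\[
\|v_n\|_p^p\geq \sum_{i=1}^{k_n}\int_{B_r(g_i^n y_n)}|v_n|^p\,dx\geq k_n\,\delta=\mathscr{M}(y_n,r,G_N)\,\delta.
\]
By compatibility, $\mathscr{M}(y_n,r,G_N)\to\infty$, contradicting boundedness of $(v_n)$ in $L^p(\R^N)$. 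Hence the vanishing hypothesis of Lions' lemma holds, and $v_n\to 0$ strongly in $L^m(\R^N)$ for every $p<m<p^*$, which is the claim.

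The only subtle point is the interplay between ``${\rm dist}(y_n,\Omega)\leq r$'' (so that the ball $B_r(y_n)$ carries nontrivial mass of a function supported in $\Omega$) and the hypothesis of compatibility, which is phrased exactly under that constraint; the rest is a routine concentration argument. I expect this verification of the geometric setup together with the check that invariance survives weak limits to be the only place where one has to be careful; the Sobolev and functional-analytic steps are standard.
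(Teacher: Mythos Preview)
Your argument is correct. The paper itself gives no proof of this lemma at all: it simply invokes \cite[Proposition~4.2]{koba} and states the result. What you have written is a self-contained proof along the standard Lions--Strauss lines---local Rellich compactness to handle bounded regions, Lions' vanishing lemma for the tails, and the disjoint-balls combinatorics from the compatibility condition to force vanishing---which is precisely the mechanism behind the cited reference. All the steps check out, including the two points you flag as delicate: the weak limit $u$ stays in $\fixs$ because $\fixs$ is a closed subspace, so $v_n=u_n-u$ is $G_N$-invariant; and the constraint ${\rm dist}(y_n,\Omega)\leq r$ is automatic once $\int_{B_r(y_n)}|v_n|^p\geq\delta>0$, since $v_n$ is extended by zero outside $\Omega$.
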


\begin{remark}\rm
	\label{partimpcase}
A particular, but important, case is contained in Lemma~\ref{compsobemb}, namely
$$
\Omega=\R^N,\quad
G_N:={\mathscr O}(N_1)\times {\mathscr O}(N_2)\times\cdots {\mathscr O}(N_\ell),
\quad N=\sum_{j=1}^\ell N_j, \quad \ell\geq 1,\,\, N_j\geq 2.
$$
In fact, $\R^N$ is compatible with this $G_N$ (cf.~\cite[Corollary 1.25]{willem}). Then Lemma~\ref{compsobemb} yields
$$
{\rm Fix}_{W^{1,p}(\R^N)}({\mathscr O}(N_1)\times {\mathscr O}(N_2)\times\cdots {\mathscr O}(N_\ell))\hookrightarrow L^m(\Omega),\quad\,\,
\text{for all $p<m<p^*$},
$$
with compact injection. In particular, ${\rm Fix}_{W^{1,p}(\R^N)}({\mathscr O}(N))\hookrightarrow L^m(\Omega)$ with compact injection.
\end{remark}

\subsubsection{Invariant quasi-linear functionals}
\label{invffffunct}
Let $f:W^{1,p}_0(\Omega)\to\R\cup\{+\infty\}$ be the functional 
\begin{equation}
	\label{funzionale}
f(u)=J(u)+I(u),\,\,\qquad 
J(u):=\int_\Omega j(u,|Du|),\,\,\,\, I(u):=\int_\Omega \frac{|u|^p}{p}-\int_\Omega \frac{|u|^q}{q},
\end{equation}
where $p<q<p^*$. Notice that $I:W^{1,p}_0(\Omega)\to\R$ is a $C^1$ functional while, on the contrary,
$J:W^{1,p}_0(\Omega)\to\R\cup\{+\infty\}$ is, in general, merely lower semi-continuous, by Fatou's lemma, if $j\geq 0$. Moreover, 
$f$ is invariant under $G_N$, being invariant under the action of ${\mathscr O}(N)$, that is
$$
\forall u\in W^{1,p}_0(\Omega),\,\,\forall g\in {\mathscr O}(N):\quad
f(gu)=f(u).
$$
We consider the following assumptions on $j$. We assume that, for every $s$ in $\R$,
\begin{equation}\label{j1}
\text{$\big\{t\mapsto j(s,t)\big\}$
is strictly convex and increasing}.
\end{equation}
Moreover, there exist a constant $\alpha_0>0$ and
a positive increasing function $\alpha\in C(\R)$
such that, for every $(s,t)\in\R\times\R^+$, it holds
\begin{equation}\label{j2}
\alpha_0 t^p\leq j(s,t)\leq \alpha(|s|)t^p.
\end{equation}
The functions $j_s(s,t)$ and $j_t (s,t)$
denote the derivatives of $j(s,t)$ with
respect to the variables $s$ and $t$
respectively.
Regarding the function $j_s(s,t)$,
we assume that there exist two positive increasing functions
$\beta,\gamma\in C(\R)$ and a positive constant $R$ such that 
\begin{equation}\label{j3}
|j_s(s,t)|\leq \beta(|s|)t^p,\qquad \text{
for every $s$ in $\R$ and all $t\in\R^+$},
\end{equation}
\begin{equation}\label{j32}
|j_t(s,t)|\leq \gamma(|s|)t^{p-1},\qquad \text{
for every $s$ in $\R$ and all $t\in\R^+$},
\end{equation}
\begin{equation}\label{j4}
\qquad \qquad  j_s(s,t)s\geq 0,
\qquad \text{for every $s$ in $\R$ with $|s|\geq R$ and all $t\in\R^+$.}
\end{equation}
It is readily seen that, without loss of generality, we can assume that $\gamma=\alpha$, up to a constant.
Furthermore, we assume that
there exist $R'>0$ and $\delta>0$ such that
\begin{equation}
\label{j5}
q j(s,t)-j_s(s,t)s-(1+\delta)j_t(s,t)t\geq 0,
\quad\text{for every $s\in\R$ with $|s|\geq R'$}
\end{equation}
and all $t\in\R^+$. Finally, it holds 
\begin{equation}
\label{j6}
\lim\limits_{|s|\to\infty} \frac{\alpha(|s|)}{|s|^{q-p}}=0.
\end{equation}
In the above assumptions, $f$ is merely lower semi-continuous. If
$\alpha$ is bounded, then $f$ becomes a continuous functional. Condition~\eqref{j4}
is typical for these problems and plays a significant r\v ole in the verification of the Palais-Smale 
condition and in the regularity theory (cf.~e.g.~\cite{sq-monog}).
Condition~\eqref{j5} allows the Palais-Smale sequences to be bounded in $W^{1,p}_0(\Omega)$, while
\eqref{j6} guarantees that $f$ admits a Mountain Pass geometry.

\subsubsection{Statement of the results}

The main result of the section is the following

\begin{theorem}
	\label{appl}
	Assume that $\Omega$ is compatible with $G_N$. 
	Then there exists a nontrivial solution $u\in \fixs$ to the quasi-linear problem	
	\begin{equation}
		\label{problemvarform}
	\int_\Omega j_t(u,|Du|)\frac{Du}{|Du|}\cdot D v +
		\int_\Omega j_s(u,|Du|)v+\int_\Omega |u|^{p-2}uv=\int_\Omega |u|^{q-2}uv,
	\end{equation}
	for every $v\in C^\infty_c(\Omega)$.
\end{theorem}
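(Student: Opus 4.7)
The plan is to combine a Mountain Pass argument for the restricted functional $f|_{\fixs}$ with Theorem~\ref{abstr22} in order to produce a nontrivial $G_N$-invariant distributional solution of~\eqref{problemvarform}. Restricting to $\fixs$ is what buys the compactness needed for Palais-Smale, via Lemma~\ref{compsobemb}, while Theorem~\ref{abstr22} converts the criticality over $\fixs$ into the desired Euler-Lagrange relation tested against all $v \in C^\infty_c(\Omega)$.

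First I would check the Mountain Pass geometry of $f|_{\fixs}$: from \eqref{j2} one has $f(u)\geq \alpha_0 \|Du\|_p^p + \frac{1}{p}\|u\|_p^p - \frac{1}{q}\|u\|_q^q$, so the Sobolev embedding yields a positive lower bound on a small sphere of $\fixs$; for a nonzero $w \in \fixs \cap C^\infty_c(\Omega)$, assumption \eqref{j6} together with \eqref{j2} gives $f(tw)\to -\infty$ as $t\to\infty$. Next, Palais-Smale must be established at the Mountain Pass level $c>0$: if $(u_n)\subset \fixs$ is a PS sequence, condition \eqref{j5} combined with the standard manipulations in \cite{sq-monog} shows $(u_n)$ is bounded in $W^{1,p}_0(\Omega)$; Lemma~\ref{compsobemb} and reflexivity then give a weak limit $u\in \fixs$ and strong convergence in every $L^m$, $p<m<p^*$; strong convergence in $W^{1,p}_0$ finally follows from the strict convexity \eqref{j1} together with \eqref{j3}-\eqref{j4}, after first obtaining pointwise a.e.~convergence of the gradients through a Boccardo-Murat type truncation argument. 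The non-smooth Mountain Pass theorem from \cite{dm} then produces a nontrivial critical point $u\in \fixs$ of $f|_{\fixs}$, that is, $0\in\partial(f|_{\fixs})(u)$.

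Second, I would apply Theorem~\ref{abstr22} with the invariant dense subspace $V_u:=C^\infty_c(\Omega)$, which is $G_N$-invariant since $\Omega$ is, and dense in $W^{1,p}_0(\Omega)$. The family realizing \eqref{suspazio} can be taken as $C_j:=\{v\in C^\infty_c(\Omega):\|v\|_\infty\leq j,\ \mathrm{supp}(v)\subset K_j\}$ with $(K_j)$ an increasing exhaustion of $\Omega$ by $G_N$-invariant compacts; each $C_j$ is convex, closed, $G_N$-invariant, and on it $v\mapsto f'(u)v$ is continuous by dominated convergence via \eqref{j3}-\eqref{j32}. The classical directional derivative $f'(u)v$ for $v\in V_u$ exists by differentiation under the integral sign, again using \eqref{j3}-\eqref{j32}, and it equals the left-hand side minus the right-hand side of~\eqref{problemvarform}. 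Inequalities \eqref{crucineq1}-\eqref{crucineq2} hold because the classical one-sided derivative dominates the generalized one of~\cite{campad} whenever it exists. Theorem~\ref{abstr22} then yields $f'(u)v=0$ for every $v\in C^\infty_c(\Omega)$, which is precisely~\eqref{problemvarform}; nontriviality is inherited from the Mountain Pass level $c>0=f(0)$.

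The hard part, as is typical in this quasi-linear setting, is the strong $W^{1,p}_0$-convergence step in Palais-Smale: the nonlinear dependence of $j$ on $|Du|$ prevents passing to the limit in $\int j_t(u_n,|Du_n|)\frac{Du_n}{|Du_n|}\cdot Dv$ by weak convergence alone, and one must first obtain a.e.~convergence of $Du_n$ from the monotonicity in \eqref{j1} before upgrading. This is also where \eqref{j5} plays its decisive role by ensuring the preliminary boundedness of the PS sequence despite $f$ being merely lower semi-continuous and non-convex, which simultaneously explains why the earlier non-smooth principles of \cite{koba,krisvarga} are not applicable and why Theorem~\ref{abstr22} is genuinely needed.
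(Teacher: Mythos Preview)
Your overall strategy matches the paper's: Mountain Pass on $f|_{\fixs}$ to produce a nontrivial critical point, then Theorem~\ref{abstr22} to upgrade to unrestricted criticality. However, your implementation of the second step contains a genuine gap.

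The choice $V_u = C^\infty_c(\Omega)$ does not fit the hypotheses of Theorem~\ref{abstr22}. First, condition~\eqref{crucineq0} fails in general: for $v\in C^\infty_c(\Omega)$ the critical point $u$ may be unbounded on $\mathrm{supp}(v)$, and since the growth bounds~\eqref{j3}--\eqref{j32} involve the merely continuous increasing functions $\beta(|s|),\gamma(|s|)$, there is no a priori reason for $j_s(u,|Du|)v$ or $j_t(u,|Du|)\frac{Du}{|Du|}\cdot Dv$ to be integrable; differentiation under the integral sign is not justified. Second, your sets $C_j=\{v\in C^\infty_c(\Omega):\|v\|_\infty\leq j,\ \mathrm{supp}(v)\subset K_j\}$ are not closed in $W^{1,p}_0(\Omega)$, so~\eqref{suspazio} fails. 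Third, the claim that ``the classical one-sided derivative dominates the generalized one of~\cite{campad} whenever it exists'' is not a general fact for merely lower semi-continuous functionals; establishing $(f|_{\fixs})^\circ(u;v)\leq f'(u)v$ and $f^\circ(u;v)\leq f'(u)v$ requires building an explicit deformation in the sense of Definition~\ref{effe0}, which the paper does in Lemma~\ref{subdcaract} via the cut-off $H(z/k)v$.

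The paper circumvents all three issues by taking instead the space $V_u$ of~\eqref{defvu}, namely those $v\in W^{1,p}_0(\Omega)\cap L^\infty(\Omega)$ with $u\in L^\infty(\{v\neq 0\})$. With this choice the directional derivative exists (Proposition~\ref{derivate}), the sets $C_j=\{v:|v|\leq j,\ |u|\leq j\text{ on }\{v\neq 0\}\}$ of Lemma~\ref{splitting} are closed, convex and $G_N$-invariant, and Lemma~\ref{subdcaract} (hence Corollary~\ref{subdcaract-cor}) supplies~\eqref{crucineq1}--\eqref{crucineq2}. Theorem~\ref{abstr22} then yields $f'(u)v=0$ for all $v\in V_u$. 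The passage to test functions $v\in C^\infty_c(\Omega)$ is a \emph{separate} final step: from $|df|_{\fixs}|(u)=0$ and Proposition~\ref{linkkkfix} one first extracts $j_t(u,|Du|)|Du|,\, j_s(u,|Du|)u\in L^1(\Omega)$, deduces $j_s(u,|Du|)\in L^1(\Omega)$ and $j_t(u,|Du|)\in L^1_{\rm loc}(\Omega)$, and only then extends the equation to $C^\infty_c(\Omega)$ by approximation. In short, the integrability needed to test against $C^\infty_c$ is a \emph{consequence} of criticality on the tailored space $V_u$, not something available a priori.
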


\vskip3pt
\noindent
Furthermore, let $\Omega=\R^N$ with $N\geq 2$. Then, we have the following

\begin{corollary}
	\label{appl-cor1}
	 Problem~\eqref{problemvarform} admits a nontrivial radial positive solution.
\end{corollary}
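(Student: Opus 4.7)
I would derive Corollary~\ref{appl-cor1} essentially as a specialization of Theorem~\ref{appl} together with a maximum-principle argument. Set $\Omega=\R^N$ and $G_N={\mathscr O}(N)$. By Remark~\ref{partimpcase}, $\R^N$ is compatible with ${\mathscr O}(N)$, so Lemma~\ref{compsobemb} and hence Theorem~\ref{appl} apply, and the fixed-point space $\fixs={\rm Fix}_{W^{1,p}(\R^N)}({\mathscr O}(N))$ coincides with the space of radial Sobolev functions. Thus Theorem~\ref{appl} already provides a nontrivial radial solution $u$ of \eqref{problemvarform}; the only remaining point is positivity.

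To force positivity, I would first replace the problem by a version whose nontrivial critical points are automatically nonnegative, and then apply Theorem~\ref{appl} to this modified problem. Specifically, I replace $j(s,t)$ by $\tilde j(s,t):=j(|s|,t)$, which is even in $s$ and still satisfies \eqref{j1}--\eqref{j6} with the same functions $\alpha,\beta,\gamma$ (so that the associated $\tilde J$ is $G_N$-invariant), and replace $\int_\Omega |u|^q/q$ in $I$ by $\int_\Omega (u^+)^q/q$, obtaining a $C^1$-functional $\tilde I$. The modified functional $\tilde f=\tilde J+\tilde I$ retains the Mountain Pass geometry thanks to \eqref{j6} and, by \eqref{j5}, its Palais--Smale sequences are still bounded in $W^{1,p}(\R^N)$. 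Applying Theorem~\ref{appl} to $\tilde f$ yields a nontrivial radial $u$ satisfying
\begin{equation*}
\int_{\R^N} j_t(u,|Du|)\frac{Du}{|Du|}\cdot Dv+\int_{\R^N}\tilde j_s(u,|Du|)v+\int_{\R^N}|u|^{p-2}uv=\int_{\R^N}(u^+)^{q-1}v
\end{equation*}
for every $v\in C^\infty_c(\R^N)$.

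To conclude that $u\geq 0$, I would test the identity with an approximation of $-u^-$. Since $u^-$ is not a priori an admissible test function, one uses a standard truncation/cut-off scheme $v_k=-T_k(u^-)\eta_k$, with $\eta_k\in C^\infty_c(\R^N)$ radial cutoffs, and passes to the limit using the growth bounds \eqref{j2}--\eqref{j32}. Combining $\tilde j_s(s,t)=0$ on $\{s<0\}$ (from evenness), the strict monotonicity of $j(s,\cdot)$ from \eqref{j1}, and the lower bound \eqref{j2}, one obtains
\begin{equation*}
\alpha_0\int_{\R^N}|Du^-|^p+\int_{\R^N}(u^-)^p\leq 0,
\end{equation*}
hence $u^-\equiv 0$, so $u\geq 0$ and $u\not\equiv 0$. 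Strict positivity $u>0$ on $\R^N$ then follows from the V\'azquez strong maximum principle applied to the quasi-linear Euler--Lagrange operator.

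\textbf{Main obstacle.} The delicate step is the passage from the identity in Theorem~\ref{appl}, valid only for $v\in C^\infty_c(\R^N)$, to a legitimate test with $-u^-$. This density argument has to be compatible with the growth of $j_t(u,|Du|)$ and $\tilde j_s(u,|Du|)$ and, as usual for this class of lower semi-continuous quasi-linear functionals (see \cite{sq-monog}), requires the a priori control coming from the equation together with \eqref{j2}--\eqref{j32}. A secondary but routine verification is that the modification $\tilde f$ still fulfils every structural hypothesis used inside the proof of Theorem~\ref{appl}, in particular the Mountain Pass geometry and the boundedness of Palais--Smale sequences.
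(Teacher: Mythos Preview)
Your overall route matches the paper's: take $\Omega=\R^N$, $G_N=\mathscr O(N)$, replace $\int|u|^q/q$ by $\int(u^+)^q/q$, run the argument of Theorem~\ref{appl} to get a nontrivial radial $u$, and then test with (an approximation of) $-u^-$ to force $u\geq 0$. However, there is a genuine gap in the positivity step.

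The claim that $\tilde j_s(s,t)=0$ for $s<0$ when $\tilde j(s,t)=j(|s|,t)$ is incorrect: evenness of $\tilde j$ in $s$ only yields that $\tilde j_s$ is \emph{odd} in $s$, namely $\tilde j_s(s,t)=\mathrm{sign}(s)\,j_s(|s|,t)$, which does not vanish for $s<0$. Consequently, when you test with $-u^-$ the term
\[
\int_{\{u<0\}}\tilde j_s(u,|Du|)(-u^-)=\int_{\{u<0\}} j_s(|u|,|Du|)\,|u|
\]
has no sign control on $\{|u|<R\}$ (assumption~\eqref{j4} only gives $j_s(s,t)s\geq 0$ for $|s|\geq R$), and the inequality $\alpha_0\int|Du^-|^p+\int(u^-)^p\leq 0$ does not follow. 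The modification of $j$ thus buys nothing here; the paper in fact leaves $j$ untouched and only replaces the nonlinearity by $(u^+)^q/q$.

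The paper's device to neutralize the uncontrolled part of $j_s$ is to test not with $-u^-$ but with the weighted function $\tilde v=-u^-e^{\zeta(u)}$, where $\zeta$ is the Lipschitz cutoff of~\eqref{zetadef}. The extra contribution from $D(e^{\zeta(u)})$ produces the combined term $-[\,j_s(u,|Du|)+j_t(u,|Du|)|Du|\zeta'(u)\,]u^-e^{\zeta(u)}$, and the constant $M$ in $\zeta$ is chosen (using $j_t(s,t)t\geq\alpha_0 t^p$ together with \eqref{j3}--\eqref{j4}) so that $[j_s(s,t)+j_t(s,t)t\zeta'(s)]s\geq 0$ for \emph{all} $s\in\R$. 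This makes every term on the left nonnegative and forces $\int(u^-)^p=0$. Admissibility of $\tilde v$ is obtained via \cite[Theorem~4.8]{pelsqu} (cf.~Proposition~\ref{testenlarging}), starting from the identity $f'(u)v=0$ for all $v\in V_u$ established inside the proof of Theorem~\ref{appl}, rather than from the $C^\infty_c$-formulation.
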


\begin{remark}\rm
	Assume $p=2$ and $j(s,t)=\frac{t^2}{2}$. Then, Corollary~\ref{appl-cor1} reduces to the 
	results due to {\sc Strauss}~\cite{strausscomp} (see, for instance, \cite[Theorem 1.29]{willem}).
\end{remark}

\begin{remark}\rm
In place of $|u|^{q-2}u$, more general nonlinearities $f(|x|,u)$ could be handled by Theorem~\ref{appl}.
For instance, if $\Omega=\R^N$ and $p<q<p^*$, one could assume that, for 
all $\eps>0$, there exists $C_\eps>0$ such that
\begin{equation}
	\label{crescita}
|f(|x|,s)|\leq \eps |s|^{p-1}+C_\eps |s|^{q-1},\quad\text{for a.e. $x\in\R^N$ and all $s\in\R$}.
\end{equation}
Let $W^{1,p}_{G}(\R^N)$ denote $\fix$ when $X=W^{1,p}(\R^N)$.
We claim that the map $\{u\mapsto f(|x|,u)\}$ is completely continuous from $W^{1,p}_{G}(\R^N)$ to its dual, as soon as the injection
of $W^{1,p}_{G}(\R^N)$ into $L^q(\R^N)$ is compact.
In fact, let $(u_h)\subset W^{1,p}_{G}(\R^N)$ be a bounded sequence. Then, up to a subsequence, 
$(u_h)$ converges weakly in $W^{1,p}(\R^N)$ and strongly in $L^q(\R^N)$ 
to some function $u\in W^{1,p}_{G}(\R^N)$. Let $\eps>0$ and let $C_\eps$ be the constant appearing
in~\eqref{crescita}. Then, it is readily checked that, for all $\eta>0$, there exist 
$R=R(\eta)>0$ and $C>0$ (independent of $\eps$ and $\eta$) with
\begin{equation*}
\sup_{h\geq 1}\sup_{\overset{v\in W^{1,p}_{G}(\R^N)}{\|v\|_{1,p}\leq 1}}
\Big|\int_{\R^N\setminus B(0,R)} (f(|x|,u_h)-f(|x|,u))v\Big|< C_\eps\eta+\eps C.
\end{equation*}
Therefore, taking into account that $\|f(|x|,u_h)-f(|x|,u)\|_{L^{q'}(B(0,R))}\to 0$ as $h\to\infty$,
the desired claim follows by letting first $h\to\infty$, then $\eta\to 0^+$ and, finally, $\eps\to 0^+$.
\end{remark}

\subsubsection{Some preparatory facts}

For all $u\in W^{1,p}_0(\Omega)$, define the space
\begin{equation}\label{defvu}
V_u=\big\{v\in W^{1,p}_0(\Omega)\cap L^\infty(\Omega):\,
u\in L^{\infty}(\{x\in\Omega:\,v(x)\not=0\})\big\}.
\end{equation}

The vector space $V_u$ is dense in $W^{1,p}_0(\Omega)$ (cf.~\cite{degzan}).
The following proposition,  easy to prove, shows that $V_u$ is a good test space
to differentiate non-smooth functionals under
suitable growth conditions. In particular, as a consequence, 
the abstract condition~\eqref{crucineq0} is fulfilled.

\begin{proposition}
	\label{derivate}
Assume conditions~\eqref{j2}, \eqref{j3} and \eqref{j4}. 
Then, for every $u\in W^{1,p}_0(\Omega)$ with $J(u)<+\infty$
and every $v\in V_u$ we have
$$
j_s(u,|Du|)v\in L^1(\Omega),\qquad
j_t(u,|Du|)\frac{Du}{|Du|}\cdot D v\in L^1(\Omega),
$$
with the agreement that $j_t(u,|Du|)\frac{Du}{|Du|}=0$ when $|Du|=0$ (in view of~\eqref{j4}). Moreover,
the function $\{t\mapsto J(u+tv)\}$ is of class $C^1$ and 
$$
J'(u)(v)=\int_{\Omega}j_t(u,|Du|)\frac{Du}{|Du|}\cdot Dv
+\int_{\Omega}j_s(u,|Du|)v.
$$
In particular,
$$
f'(u)(v)=\int_{\Omega}j_t(u,|Du|)\frac{Du}{|Du|}\cdot Dv
+\int_{\Omega}j_s(u,|Du|)v+\int_{\Omega}|u|^{p-2}uv-\int_{\Omega}|u|^{q-2}uv,
$$
for every $v\in V_u$.
\end{proposition}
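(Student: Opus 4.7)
The plan is to split Proposition~\ref{derivate} into three steps: (i) $L^1$-integrability of the two candidate integrands, (ii) $C^1$-regularity of the scalar map $\phi(t):=J(u+tv)$ via differentiation under the integral sign, and (iii) concatenation with the $C^1$ functional $I$.

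For (i), I would set $M:=\operatorname*{ess\,sup}_{\{v\neq 0\}}|u|<+\infty$, finite by the definition of $V_u$ in~\eqref{defvu}, and exploit the standard fact that $Dv=0$ a.e.~on $\{v=0\}$ for every Sobolev function $v$. Applying~\eqref{j3} gives
$$|j_s(u,|Du|)\,v|\leq \beta(M)\,\|v\|_\infty\,|Du|^p\,\chi_{\{v\neq 0\}}\in L^1(\Omega),$$
and~\eqref{j32} combined with H\"older's inequality (exponents $p',p$) yields
$$\Bigl|j_t(u,|Du|)\tfrac{Du}{|Du|}\cdot Dv\Bigr|\leq \gamma(M)\,|Du|^{p-1}|Dv|\,\chi_{\{v\neq 0\}}\in L^1(\Omega).$$

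For (ii) I would fix $T>0$ and set $M':=M+T\|v\|_\infty$. For a.e.~$x\in\Omega$, the scalar map $\tau\mapsto g_x(\tau):=j(u(x)+\tau v(x),|Du(x)+\tau Dv(x)|)$ is constant in $\tau$ on $\{v=0\}$ (using $Dv=0$ a.e.~there) and is differentiable at every $\tau\in[-T,T]$ on $\{v\neq 0\}$, with the convention that the $j_t$-contribution is set to $0$ wherever $|Du+\tau Dv|=0$; explicitly,
$$g_x'(\tau)=j_s(u+\tau v,|Du+\tau Dv|)\,v+j_t(u+\tau v,|Du+\tau Dv|)\,\tfrac{(Du+\tau Dv)\cdot Dv}{|Du+\tau Dv|}.$$
Using $|u+\tau v|\leq M'$ on $\{v\neq 0\}$ and $|Du+\tau Dv|\leq |Du|+T|Dv|$, assumptions~\eqref{j3} and~\eqref{j32} furnish the uniform-in-$\tau$ estimate
$$|g_x'(\tau)|\leq \beta(M')\|v\|_\infty\bigl(|Du|+T|Dv|\bigr)^p+\gamma(M')\bigl(|Du|+T|Dv|\bigr)^{p-1}|Dv|,$$
whose right-hand side lies in $L^1(\Omega)$. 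The mean-value theorem then dominates the difference quotient $[\phi(t+h)-\phi(t)]/h$ for $t,t+h\in[-T,T]$ by this $L^1$ function, so dominated convergence delivers $\phi'(t)=\int_\Omega g_x'(t)\,dx$; a further application of DCT with the same domination together with the continuity of $j_s$ and $j_t$ gives continuity of $\phi'$ on $[-T,T]$, whence $\phi\in C^1(\R)$ since $T$ was arbitrary.

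Step (iii) is routine: $I\in C^1(W^{1,p}_0(\Omega);\R)$ with Fr\'echet derivative $I'(u)(v)=\int_\Omega |u|^{p-2}uv-\int_\Omega |u|^{q-2}uv$ (valid since $p<q<p^*$), so the last formula for $f'(u)(v)$ follows by additivity of the directional derivative. The hard part will be checking that the degenerate set $\{|Du+\tau Dv|=0\}$ causes no trouble, since there the chain rule is not literally applicable and $\tfrac{Du+\tau Dv}{|Du+\tau Dv|}$ is ill-defined; the squeeze $j(s,t)\leq \alpha(|s|)t^p$ coming from~\eqref{j2} forces the right derivative $j_t(s,0^+)$ to vanish, which reconciles the stated convention with the actual limit of the difference quotient and makes every dominated-convergence step insensitive to what happens on this null set.
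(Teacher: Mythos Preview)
Your argument is correct and is precisely the standard route: bound the integrands on $\{v\neq 0\}$ using~\eqref{j3}--\eqref{j32} together with the $L^\infty$-control on $u$ coming from the definition of $V_u$, then differentiate under the integral sign via dominated convergence with a uniform $L^1$ majorant on each compact $\tau$-interval. The paper does not supply a proof of this proposition at all---it is declared ``easy to prove''---so there is nothing to compare against; your write-up is exactly what a reader filling in the details would do.

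Two minor remarks. First, you invoke~\eqref{j32} for the $j_t$ bound, while the statement lists only~\eqref{j2},~\eqref{j3},~\eqref{j4}; this is a harmless omission in the paper's hypothesis list (indeed~\eqref{j32} is clearly needed, and the paper uses it freely elsewhere under the same heading). Second, your handling of the degenerate set $\{|Du+\tau Dv|=0\}$ is the right one: the upper bound in~\eqref{j2} forces $j(s,0)=0$ and $j(s,t)/t\to 0$ as $t\to 0^+$, so the difference quotient of $g_x$ at such $\tau$ vanishes and the stated convention $j_t(u,|Du|)\frac{Du}{|Du|}:=0$ on $\{|Du|=0\}$ is consistent with the actual derivative. (The paper's parenthetical ``in view of~\eqref{j4}'' appears to be a misprint; the relevant input is~\eqref{j2} or~\eqref{j32}, as you observe.)
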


\vskip2pt
\noindent
For all $u\in \fixs$ and any $j\geq 1$, let us now set
$$
C_j=\{v\in W^{1,p}_0(\Omega): \text{$|v|\leq j$ a.e. in $\Omega$ and $|u|\leq j$ a.e. where $v\neq 0$}\}.
$$
Then, we have the following

\begin{lemma}
	\label{splitting}
For all $u\in \fixs$ and $j\geq 1$, 
the set $C_j$ is convex, closed, $G_N$-invariant with $f'(u)|_{C_j}$ continuous and
\begin{equation}
	\label{decompositform}
V_u=\bigcup_{j=1}^\infty C_j.
\end{equation}
In particular $V_u$ is $G_N$-invariant.
\end{lemma}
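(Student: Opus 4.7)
The plan is to reduce all four assertions about $C_j$ to elementary pointwise/measure-theoretic arguments, the only substantive step being the continuity of $f'(u)$ on $C_j$. The identity $V_u=\bigcup_j C_j$ is essentially definitional: for any $v\in V_u$, take $j$ to be any integer with $\|v\|_\infty\leq j$ and $\|u\|_{L^\infty(\{v\neq 0\})}\leq j$; conversely every $v\in C_j$ satisfies both boundedness conditions defining $V_u$. The $G_N$-invariance of $V_u$ then follows from that of each $C_j$.

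I would first verify the three measure-theoretic properties of $C_j$. Convexity uses $|\lambda v_1+(1-\lambda)v_2|\leq j$ together with the inclusion $\{\lambda v_1+(1-\lambda)v_2\neq 0\}\subseteq\{v_1\neq 0\}\cup\{v_2\neq 0\}$. Closedness follows by extracting an a.e.\ convergent subsequence of $v_n\to v$ in $W^{1,p}_0(\Omega)$ with $v_n\in C_j$: the bound $|v|\leq j$ is preserved in the limit, and if $v(x)\neq 0$ then $v_n(x)\neq 0$ eventually, giving $|u(x)|\leq j$. For $G_N$-invariance I would use that $g\in G_N$ is measure-preserving and that $u=gu$, so the bound on $u$ transfers from $\{v\neq 0\}$ to $g\{v\neq 0\}=\{gv\neq 0\}$.

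The main step is the continuity of $f'(u)|_{C_j}$, for which I would invoke the explicit formula of Proposition~\ref{derivate}. The two semi-linear pieces $v\mapsto\int|u|^{p-2}uv$ and $v\mapsto\int|u|^{q-2}uv$ are continuous on all of $W^{1,p}_0(\Omega)$, since $u\in L^p(\Omega)\cap L^q(\Omega)$ (the latter via $q<p^*$ and Sobolev embedding) yields $|u|^{p-2}u\in L^{p'}$ and $|u|^{q-2}u\in L^{q'}$. For the two quasi-linear pieces I would exploit the standard fact that $Dv=0$ a.e.\ on $\{v=0\}$, so that their integrands are supported in $\{v\neq 0\}$, where by definition of $C_j$ one has $|u|\leq j$. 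For $v_n,v\in C_j$ with $v_n\to v$ in $W^{1,p}_0(\Omega)$, the growth conditions \eqref{j32} and \eqref{j3} give, on $E=\{v_n\neq 0\}\cup\{v\neq 0\}$, the pointwise bounds $|j_t(u,|Du|)|\leq \gamma(j)|Du|^{p-1}$ and $|j_s(u,|Du|)|\leq \beta(j)|Du|^p$. H\"older's inequality then yields
$$\Big|\int_\Omega j_t(u,|Du|)\tfrac{Du}{|Du|}\cdot(Dv_n-Dv)\Big|\leq \gamma(j)\|Du\|_p^{p-1}\|Dv_n-Dv\|_p\longrightarrow 0,$$
while the $j_s$ term is handled by dominated convergence using the common dominant $j\beta(j)|Du|^p\in L^1(\Omega)$, applied after extracting an a.e.\ convergent subsequence and then upgrading to the whole sequence by the standard argument.

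The main obstacle is the careful bookkeeping around the varying supports $\{v_n\neq 0\}$: since $C_j$ is not a linear subspace, one must use the defining condition of $C_j$ precisely to secure the uniform bound $|u|\leq j$ on $E$ independently of $n$. Once this is in place, the growth hypotheses on $j_s$ and $j_t$ produce $L^1$-integrable dominants and H\"older estimates with constants depending only on $j$ and $u$, and the continuity conclusion is immediate.
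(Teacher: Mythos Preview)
Your argument is correct and follows essentially the same route as the paper: convexity, closedness, and $G_N$-invariance are handled by the same pointwise/measure-theoretic considerations, and the continuity of $f'(u)|_{C_j}$ is obtained from Proposition~\ref{derivate} together with the growth bounds~\eqref{j3}--\eqref{j32} and the restriction $|u|\leq j$ on the relevant supports. The only noticeable difference is that for the $j_t$-term you pass directly via H\"older (exploiting $D(v_n-v)=0$ a.e.\ outside $\{v_n\neq0\}\cup\{v\neq0\}$), whereas the paper phrases both terms through dominated convergence with the $h$-dependent majorant $M_j|Du|^{p-1}|Dv_h|$; your version is slightly cleaner here but otherwise equivalent.
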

\begin{proof}
	Of course each $C_j$ is convex. Moreover $C_j$ is $G$-invariant. In fact, if $v\in C_j$,
	then $|(gv)(x)|=|v(g^{-1}x)|\leq j$ a.e.~in $\Omega$, being the domain invariant under $G_N$. 
	Moreover, if $x\in\{gv\neq 0\}$, then $v(g^{-1}x)\neq 0$, so that
	$|u(x)|=|u(g^{-1}x)|\leq j$, since $u\in\fixs$. Hence $gv\in C_j$. Let us now prove that $C_j$ is closed. If $(v_h)\subset C_j$
	with $v_h\to v$ in $W^{1,p}_0(\Omega)$, then up to a subsequence $|v_h(x)|\leq j$ and $v_h(x)\to v(x)$, as
	$h\to\infty$, yielding $|v|\leq j$ a.e.~in $\Omega$. Moreover, if $A=\{v\neq 0\}$, then by the pointwise convergence,
	$$
	A\subset B=\bigcup_{h=1}^\infty\{v_h\neq 0\}.
	$$
	Since $\sup_B|u|\leq j$, it holds  $\sup_A |u|\leq \sup_B |u|\leq j$. Thus, each $C_j$ is closed.
	Let us now prove~\eqref{decompositform}. Let $v\in V_u$. Since $v\in L^\infty(\Omega)$, there exists $j_0\geq 1$ such that
	$|v|\leq j_0$, a.e. in $\Omega$. Since $u$ is uniformly bounded over $\{v\neq 0\}$,
	up to enlarging $j_0$, we may as well assume that $|u|\leq j_0$ over $\{v\neq 0\}$. Hence $u\in C_{j_0}$,
	proving the assertion, the converse inclusion being trivial.
	Finally, let us prove that $f'(u)|_{C_j}$ is continuous. To this aim, let $(v_h)\subset C_j$
	with $v_h\to v$ in $W^{1,p}_0(\Omega)$, as $h\to\infty$. For all $h\geq 1$, since $v_h\in V_u$, by 
	Proposition~\ref{derivate}, we get
	$$
	f'(u)v_h=\int_{\Omega}j_t(u,|Du|)\frac{Du}{|Du|}\cdot Dv_h
	+\int_{\Omega}j_s(u,|Du|)v_h+\int_{\Omega} |u|^{p-2}uv_h-\int_{\Omega} |u|^{q-2}uv_h.
	$$
Of course, the last two integrals converge to $\int_{\Omega} |u|^{p-2}uv$
and $\int_{\Omega} |u|^{q-2}uv$, respectively. Also
	\begin{align*}
	 j_t(u,|Du|)\frac{Du}{|Du|}\cdot Dv_h &\to j_t(u,|Du|)\frac{Du}{|Du|}\cdot Dv,\quad \text{a.e. in $\Omega$}, \\
	 j_s(u,|Du|)v_h &\to j_s(u,|Du|)v,\quad \text{a.e. in $\Omega$}. 
	\end{align*}
In addition, in light of the growth conditions on $j_t,j_s$, for some positive constant $M_j$,
	\begin{align*}
&  |j_t(u,|Du|)\frac{Du}{|Du|}\cdot Dv_h|\leq M_j |Du|^{p-1}|Dv_h|,\quad \text{a.e. in $\Omega$}, \\
& |j_s(u,|Du|)v_h|\leq M_j |Du|^p|v_h|\leq j M_j |Du|^p,\quad \text{a.e. in $\Omega$}.
\end{align*}
Then, Lebesgue dominated convergence theorem yields
$f'(u)v_h\to f'(u)v$, as $h\to\infty$. 
\end{proof}

Then we have the following

\begin{lemma}
	\label{subdcaract}
	Assume conditions~\eqref{j2}-\eqref{j32} and let 
	$u\in \fixs$ such that $J(u)<+\infty$. Then, the following facts hold:
	\begin{itemize}
		\item[{\rm (i)}] for every $v\in V_u$, we have
		$$
		J^\circ(u;v)\leq J'(u)v.
		$$
		\item[{\rm (ii)}] for every $v\in V_u\cap \fixs$, we have
		$$
		(J|_\fixs)^\circ(u;v)\leq J'(u)v.
		$$
	\end{itemize}
\end{lemma}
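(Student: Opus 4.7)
The plan is to build a single continuous deformation that works for both (i) and (ii). I take the constant map $\mathcal V((\xi,\mu),t):=v$, which trivially takes values in $B_\eps(v)$ for every $\eps>0$. For (ii), since $\fixs$ is a linear subspace containing $v$, we have $\xi+tv\in\fixs$ whenever $\xi\in\fixs$; hence the same $\mathcal V$, restricted to $\epi{J|_{\fixs}}\times(0,\delta]$, automatically stays in $\fixs$ and is admissible for computing $(J|_{\fixs})^\circ(u;v)$ --- precisely the situation flagged in the remark after Theorem~\ref{abstr11}. With this choice, Definition~\ref{effe0} reduces the lemma to proving: for every $\sigma>0$ there is $\delta>0$ with $J(\xi+tv)\le\mu+(J'(u)v+\sigma)t$ for every $(\xi,\mu)\in B_\delta(u,J(u))\cap\epi{J}$ (additionally in $\fixs\times\R$ for (ii)) and every $t\in(0,\delta]$. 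Since $\mu\ge J(\xi)$, it suffices to establish the stronger inequality $J(\xi+tv)-J(\xi)\le(J'(u)v+\sigma)t$.

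I argue by contradiction: assume the existence of $\xi_n\to u$ strongly in $W^{1,p}_0(\Omega)$ and $t_n\downarrow 0$ with
\begin{equation*}
\frac{J(\xi_n+t_n v)-J(\xi_n)}{t_n}>J'(u)v+\sigma\qquad\text{for all }n.
\end{equation*}
Passing to a subsequence, $\xi_n\to u$ and $D\xi_n\to Du$ pointwise a.e.\ in $\Omega$, and $\{|D\xi_n|^p\}$ is equi-integrable in $L^1(\Omega)$ by the strong $L^p$-convergence of $D\xi_n$. Since $v\in V_u$, there exist constants $M,M'$ with $|v|,|Dv|\le M'$ and $|u|\le M$ a.e.\ on $\{v\ne 0\}$; moreover $Dv=0$ a.e.\ on $\{v=0\}$, so $j(\xi+tv,|D\xi+tDv|)=j(\xi,|D\xi|)$ on that set and only the contribution from $\{v\ne 0\}$ survives in the difference.

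By the chain rule (with the convention that the $j_t$-term vanishes when $|D\xi_n+sDv|=0$, licit because $j_t(s,t)\to 0$ as $t\to 0^+$ by~\eqref{j32}), at a.e.\ point of $\{v\ne 0\}$ we have
\begin{equation*}
\frac{j(\xi_n+t_n v,|D\xi_n+t_n Dv|)-j(\xi_n,|D\xi_n|)}{t_n}=\frac{1}{t_n}\int_0^{t_n}\Bigl[j_s\,v+j_t\,\frac{(D\xi_n+sDv)\cdot Dv}{|D\xi_n+sDv|}\Bigr]ds,
\end{equation*}
with $j_s,j_t$ evaluated at $(\xi_n+sv,|D\xi_n+sDv|)$; the integrand converges pointwise to $j_s(u,|Du|)v+j_t(u,|Du|)\frac{Du}{|Du|}\cdot Dv$. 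The main obstacle, and the heart of the proof, is producing an $L^1(\Omega)$-dominant so that Vitali's theorem yields $\lim_n\int_\Omega(\cdots)\,dx=J'(u)v$. The growth conditions~\eqref{j3} and~\eqref{j32} give pointwise bounds of the form $\beta(|\xi_n|+M)(|D\xi_n|+M')^p$ and $\gamma(|\xi_n|+M)(|D\xi_n|+M')^{p-1}M'$; the subtlety is that $|\xi_n|$ is bounded only in $L^p$, not in $L^\infty$. I resolve this by an Egorov-type splitting of $\{v\ne 0\}$: using $\xi_n\to u$ a.e.\ there together with $|u|\le M$, for any $\eta>0$ one finds $E_\eta\subset\{v\ne 0\}$ of measure $<\eta$ outside which $|\xi_n|\le M+1$ for all large $n$, providing the fixed $L^1$-dominant $C(|Du|+M')^p$; the small-measure complement $E_\eta$ is controlled, uniformly in $n$, via equi-integrability of $|D\xi_n|^p$ together with the continuity of $\beta,\gamma$. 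Letting $n\to\infty$ then contradicts the strict inequality, proving both (i) and (ii).
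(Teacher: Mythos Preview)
Your approach --- using the constant deformation $\mathcal V\equiv v$ and a contradiction argument --- has a genuine gap precisely at the ``main obstacle'' you yourself flag. The membership $v\in V_u$ only tells you that $u$ is bounded on $\{v\neq 0\}$; it says nothing about the perturbed points $\xi_n$. On the exceptional set $E_\eta$ (or on $\{|\xi_n|>M+1\}$, however you carve it out) there is no a~priori bound on $|\xi_n|$, and since $\beta,\gamma$ are merely positive, increasing, continuous functions --- with no growth restriction whatsoever --- the factors $\beta(|\xi_n|+M')$ and $\gamma(|\xi_n|+M')$ can be arbitrarily large there. Equi-integrability of $|D\xi_n|^p$ does \emph{not} imply equi-integrability of $\beta(|\xi_n|)\,|D\xi_n|^p$. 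Concretely, take $j(s,t)=(1+e^{|s|})t^p/p$, which satisfies \eqref{j2}--\eqref{j32}, and a sequence $\xi_n\to u$ in $W^{1,p}_0$ carrying spikes of height $\sim\log n$ on balls of radius $\sim n^{-1}$ inside $\{v\neq 0\}$: the contribution of the $j_s$-term over the spikes is of order $n^{p+1-N}(\log n)^p$, which diverges whenever $p<N<p+1$, and no contradiction is reached. Two further slips compound the problem: $v\in V_u$ gives $v\in L^\infty$ but \emph{not} $Dv\in L^\infty$, so your constant $M'$ cannot bound $|Dv|$; and Egorov requires $|\{v\neq 0\}|<\infty$, which is not guaranteed on an unbounded $\Omega$.

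The paper resolves exactly this obstruction by choosing a \emph{$z$-dependent} deformation ${\mathcal H}((z,\mu),t):=H(z/k)\,v$, where $H\in C^\infty(\R)$ is a cutoff with $H=1$ on $[-1,1]$ and $H=0$ outside $[-2,2]$. This forces the test direction to vanish on $\{|z|>2k\}$, so throughout the computation one has $|z+\theta H(z/k)v|\le 2k+\|v\|_\infty$ on the relevant support, and $\beta,\gamma$ are evaluated only at arguments bounded by $2k+\|v\|_\infty$ --- a fixed constant. Because $u$ is bounded on $\{v\neq 0\}$, one also obtains $\|H(z/k)v-v\|_{1,p}<\eps$ for $k$ large and $z$ close to $u$, so the deformation still lands in $B_\eps(v)$. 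With these uniform bounds in hand, the Lagrange mean-value theorem applied to the $C^1$ map $t\mapsto J(z+tH(z/k)v)$ (here $H(z/k)v\in V_z$, so Proposition~\ref{derivate} applies) yields $J(z+tH(z/k)v)-J(z)\le rt$ directly. The cutoff is not cosmetic: it is precisely what turns a direction in $V_u$ into one lying in $V_z$ for every nearby $z$, which is the missing ingredient in your argument.
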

\begin{proof}
	The proofs of assertions (i) and (ii) are similar. 
	Hence, let us focus on the proof of (ii). Let $\eta>0$ with $J(u)<\eta$. Moreover, 
	let $v\in V_u\cap\fixs$ and $\eps>0$. Take now $r\in\R$ with
\begin{equation}
	\label{step1}
J'(u)v=\int_\Omega j_t(u,|Du|)\frac{Du}{|Du|}\cdot D v +
\int_\Omega j_s(u,|Du|)v<r.
\end{equation}
Let $H\in C^\infty(\R)$ be a cut-off function such that
$H(s)=1$ on $[-1,1]$, $H(s)=0$ outside $[-2,2]$ and
$|H'(s)|\leq 2$ on $\R$. Notice that, as $v\in V_u$, we also have $H(\frac{u}{k})v\in V_u$
and $H(\frac{z}{k})v\in V_z$ for all $z\in W^{1,p}_0(\Omega)$ and $k\geq 1$.
Then, by exploiting the growth conditions on $j_t$ and $j_s$, it is possible to prove that there exist 
$k\geq 1$ and $\delta>0$ (depending upon $k$) such that 
\begin{equation}
	\label{prima3-mod}
\Big\|H(\frac{z}{k})v-v\Big\|_{1,p}<\eps,
\end{equation}
as well as
\begin{align}
	\label{step3-mod}
J'(z+\theta H(\frac{z}{k})v) \Big(H(\frac{z}{k})v\Big)<r,
\end{align}
for all $z\in B(u,\delta)\cap J^\eta$ and $\theta\in [0,\delta)$. Since the map
$\{t\mapsto J(z+t H(\frac{z}{k})v)\}$ is of class $C^1$, by applying Lagrange theorem on $[0,t]$ and
taking into account~\eqref{step3-mod}, there exists $\theta\in [0,t]$ with
\begin{equation}
	\label{ccruxineqq}
 J(z+t H(\frac{z}{k})v)-J(z)=t J'(z+\theta H(\frac{z}{k})v) \Big(H(\frac{z}{k})v\Big)\leq rt,
\end{equation}
for every $z\in B(u,\delta)\cap J^\eta$ and all $t\in [0,\delta)$.
Up to reducing the value of $\delta>0$, we may also assume that $J(u)+\delta<\eta$. Notice that
$H(\frac{z}{k})v\in\fixs$ for all $v,z\in\fixs$, since 
$$
\forall g\in G_N:\,\,\,  g\big(H(\frac{z}{k})v\big)=H(\frac{gz}{k})gv=H(\frac{z}{k})v.
$$
Then, on account of inequality~\eqref{prima3-mod}, we are allowed to define the continuous function 
$$
{\mathcal H}: B_\delta (u,J(u))\cap {\rm epi}(J)\cap (\fixs\times\R)\times ]0,\delta]\to B_\eps(v)\cap \fixs,
$$
by setting
$$
{\mathcal H}((z,\mu),t):=H(\frac{z}{k})v.
$$
Notice that, for all $(z,\mu)\in B_\delta (u,J(u))\cap {\rm epi}(J)$, we have
$z\in B(u,\delta)\cap J^\eta$. Hence, by inequality~\eqref{ccruxineqq}, we have
$$
J(z+t{\mathcal H}((z,\mu),t))\leq J(z) +rt\leq \mu+rt.
$$
whenever $(z,\mu)\in B_\delta(u,J(u))\cap {\rm epi}(J)$ and $t\in ]0,\delta]$. Then, according to
Definition~\ref{effe0}, we can conclude that $({J|_\fixs})_\eps^\circ(u;v)\leq r$. 
By the arbitrariness of $r$, it follows that
$$
({J|_\fixs})_\eps^\circ(u;v)\leq \int_\Omega j_t(u,|Du|)\frac{Du}{|Du|}\cdot Dv +
\int_\Omega j_s(u,|Du|)v.
$$
By the arbitrariness of $\eps$, we get $({J|_\fixs})^\circ(u;v)\leq J'(u)v$,
for all $v\in V_u$. 
\end{proof}

\begin{remark}\rm
Under the assumptions of Lemma~\ref{subdcaract}, assuming that $\partial J(u)\neq\emptyset$, 
then $\partial J(u)=\{\alpha\}$ for some $\alpha\in W^{-1,p'}(\Omega)$ such that
	\begin{equation*}
		\forall v\in V_u:\quad \int_\Omega j_t(u,|Du|)\frac{Du}{|Du|}\cdot D v +
		\int_\Omega j_s(u,|Du|)v=\langle \alpha,v\rangle.
	\end{equation*}
	On account of the definition of $\partial f(u)$, it is sufficient to combine 
	(i) of Lemma~\ref{subdcaract}, the linearity of $\{v\mapsto J'(u)v\}$ and, of course, the density of 
	$V_u$ in $W^{1,p}_0(\Omega)$. In a similar fashion, assuming $\partial J|_{\fixs}(u)\neq\emptyset$, 
	then $\partial J|_{\fixs}(u)=\{\beta\}$ for some $\beta\in (\fixs)'$ such that
		\begin{equation*}
			\forall v\in V_u\cap \fixs:\quad \int_\Omega j_t(u,|Du|)\frac{Du}{|Du|}\cdot D v +
			\int_\Omega j_s(u,|Du|)v=\langle \beta,v\rangle.
		\end{equation*}
		This follows by (ii) of Lemma~\ref{subdcaract}, the linearity of $\{v\mapsto J'(u)v\}$ and, finally, by the 
		density of $V_u\cap\fixs$ in $\fixs$, which is stated in Proposition~\ref{densitypro}.
\end{remark}

Finally, returning to the functional $f$ defined in~\eqref{funzionale}, we have the following

\begin{corollary}
	\label{subdcaract-cor}
	Assume conditions~\eqref{j2}-\eqref{j32} and let 
	$u\in \fixs$ such that $f(u)<+\infty$. Then, the following facts hold:
	\begin{itemize}
		\item[{\rm (i)}] for every $v\in V_u$, we have
		$$
		f^\circ(u;v)\leq f'(u)v.
		$$
		\item[{\rm (ii)}] for every $v\in V_u\cap \fixs$, we have
		$$
		(f|_\fixs)^\circ(u;v)\leq f'(u)v.
		$$
	\end{itemize}
\end{corollary}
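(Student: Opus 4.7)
The plan is to reduce Corollary~\ref{subdcaract-cor} to Lemma~\ref{subdcaract} by absorbing the $C^1$ perturbation $I$. I write $f=J+I$; by the Sobolev embedding $W^{1,p}_0(\Omega)\hookrightarrow L^p(\Omega)\cap L^q(\Omega)$ (valid since $p<q<p^*$), the functional $I$ is of class $C^1$, with $I'(u)v=\int_\Omega|u|^{p-2}uv-\int_\Omega|u|^{q-2}uv$ and $I'$ continuous from $W^{1,p}_0(\Omega)$ to $W^{-1,p'}(\Omega)$. In particular, for every $w\in W^{1,p}_0(\Omega)$, the scalar map $\{t\mapsto I(u+tw)\}$ is $C^1$, and its derivative is continuous in $w$ with respect to the $W^{1,p}$-topology.

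To prove (i), fix $v\in V_u$ and $\eps>0$. Pick $r>f'(u)v=J'(u)v+I'(u)v$ and decompose $r=r_J+r_I$ with $r_J>J'(u)v$ and $r_I>I'(u)v$. Running the construction in the proof of Lemma~\ref{subdcaract}, I obtain $k\geq 1$, $\eta>J(u)$ and $\delta_0>0$ for which $\|H(z/k)v-v\|_{1,p}<\eps$ and, by Lagrange's theorem applied to the $C^1$ scalar map $\{t\mapsto J(z+tH(z/k)v)\}$,
\[
J(z+tH(z/k)v)\leq J(z)+r_J t,\qquad\forall z\in B(u,\delta_0)\cap J^\eta,\ \forall t\in[0,\delta_0].
\]
Since $I$ is $C^1$, by continuity of $I'$ at $u$ (possibly after shrinking $\delta_0$), I likewise have
\[
I(z+tH(z/k)v)\leq I(z)+r_I t,\qquad\forall z\in B(u,\delta_0),\ \forall t\in[0,\delta_0].
\]
Next, I choose $\delta\in(0,\delta_0]$ so small that for any $(z,\mu)\in B_\delta(u,f(u))\cap\epi{f}$, the continuity of $I$ near $u$ gives $J(z)=f(z)-I(z)\leq\mu-I(z)\leq J(u)+2\delta<\eta$. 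Defining $\mathcal{H}((z,\mu),t):=H(z/k)v$ on $B_\delta(u,f(u))\cap\epi{f}\times]0,\delta]$, with values in $B_\eps(v)$, and summing the two Lagrange inequalities while using $J(z)+I(z)\leq\mu$, I obtain
\[
f(z+t\mathcal{H}((z,\mu),t))\leq J(z)+I(z)+rt\leq\mu+rt.
\]
According to Definition~\ref{effe0}, this yields $f^\circ_\eps(u;v)\leq r$; letting $r\downarrow f'(u)v$ and $\eps\to 0^+$ proves (i).

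Assertion (ii) is analogous: by the equivariance argument of Lemma~\ref{subdcaract} (namely $H(z/k)v\in\fixs$ whenever $v,z\in\fixs$), the same deformation restricted to $B_\delta(u,f(u))\cap\epi{f}\cap(\fixs\times\R)\times]0,\delta]$ takes values in $B_\eps(v)\cap\fixs$, and the same estimate gives $(f|_\fixs)^\circ(u;v)\leq f'(u)v$. The main technical point (rather than a genuine obstacle) is the synchronization of the two Lagrange-type estimates on a common $\epi{f}$-neighborhood: the $J$-estimate relies on the cutoff machinery of Lemma~\ref{subdcaract} and requires a uniform bound on $J(z)$, while membership in $\epi{f}$ only controls $f(z)$; the continuity of $I$ at $u$ is exactly what converts the $\epi{f}$-bound into the $J$-bound needed to activate that estimate, so that a single choice of $\delta$ accommodates both.
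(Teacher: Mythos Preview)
Your argument is correct in substance but follows a different route from the paper. The paper simply invokes the abstract subadditivity inequality $(J+I)^\circ(u;v)\leq J^\circ(u;v)+I^\circ(u;v)$ from \cite[Theorem~5.1]{campad}, combines it with $I^\circ(u;v)=I'(u)v$ (valid since $I$ is $C^1$), and quotes Lemma~\ref{subdcaract} for the $J$-term; part (ii) is handled identically with $(J|_{\fixs})^\circ$ and $(I|_{\fixs})^\circ$. The corollary thus reduces to two citations.

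You instead re-open the deformation construction from the proof of Lemma~\ref{subdcaract}, run a parallel Lagrange estimate for $I$, and merge the two on a common $\epi{f}$-neighborhood. This is more self-contained (it avoids the external subadditivity theorem) and makes explicit the passage from the $\epi{f}$-bound on $f(z)$ to the sublevel bound on $J(z)$ via the continuity of $I$, a point the paper's proof leaves implicit inside the cited result. One step deserves care, however: the inequality $I(z+tH(z/k)v)\leq I(z)+r_I t$ is not secured merely by shrinking $\delta_0$. The direction $H(z/k)v$ is only guaranteed to lie in $B_\eps(v)$, with $\eps$ fixed \emph{before} $r_I$, and continuity of $I'$ at $u$ drives $I'(\xi)(H(z/k)v)$ toward $I'(u)(H(u/k)v)$, which may exceed $r_I$ by roughly $\|I'(u)\|\,\eps$. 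The repair is routine: either enlarge $k$ further (the construction in Lemma~\ref{subdcaract} allows any sufficiently large $k$) so that $\|H(z/k)v-v\|_{1,p}$ is also small relative to $r_I-I'(u)v$, or else carry an extra $C\eps$ on the right-hand side, obtain $f^\circ_\eps(u;v)\leq f'(u)v+C\eps$, and let $\eps\to 0^+$.
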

\begin{proof}
By~\eqref{funzionale}, it is $f=J+I$ with $I:W^{1,p}_0(\Omega)\to\R$ of class $C^1$. 
Then $I^\circ(u;v)=I'(u)v$ for any $v\in V_u$ and $(I|_\fixs)^\circ(u;v)=I'(u)v$ for any $v\in V_u\cap \fixs$.
Of course, we have $J(u)<+\infty$. Then, by combining~\cite[Theorem 5.1]{campad} with Lemma~\ref{subdcaract}, we have
\begin{align*}
\forall v\in V_u:\quad   f^\circ(u;v)&=(J+I)^\circ(u;v) \\
& \leq J^\circ(u;v)+I^\circ(u;v) \\
&\leq J'(u)v+I'(u)v=f'(u)v.
\end{align*}
In a similar fashion, we have
\begin{align*}
\forall v\in v\in V_u\cap \fixs:\quad    (f|_\fixs)^\circ(u;v)&=(J|_\fixs+I|_\fixs)^\circ(u;v)  \\
&\leq (J|_\fixs)^\circ(u;v)+(I|_\fixs)^\circ(u;v)  \\
&\leq J'(u)v+I'(u)v=f'(u)v,
\end{align*}
concluding the proof.
\end{proof}

In light of the previous facts, we have the following

\begin{corollary}
	\label{corconcrabs2}
Let $f:W^{1,p}_0(\Omega)\to\R\cup\{+\infty\}$ be the $G_N$-invariant functional defined in~\eqref{funzionale}
which satisfy assumptions indicated in Section~\ref{invffffunct}. Let $u\in \fixs$ with $f(u)\in\R$ 
be a critical point of $f|_{\fixs}$, that is $0\in\partial f|_{\fixs}(u)$. Then 
$$
\forall v\in V_u:\quad f'(u)v=0.
$$
Furthermore, $\partial f(u)=\{0\}$ provided that $\partial f(u)$ is nonempty.
\end{corollary}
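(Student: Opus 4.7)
The plan is to recognize that Corollary~\ref{corconcrabs2} is nothing but the abstract Theorem~\ref{abstr22} applied to the concrete setting of Section~\ref{concrete}, with $X=W^{1,p}_0(\Omega)$, the compact Lie group $G_N$, and the family of $G_N$-invariant, dense subspaces $\{V_u\}_{u\in\fixs}$ defined in~\eqref{defvu}. All the technical work needed to verify the hypotheses \eqref{crucineq0}--\eqref{suspazio} has already been carried out in the preparatory lemmas of this section, so the proof essentially consists in assembling those pieces.

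More precisely, I would first remark that $V_u$ is dense in $W^{1,p}_0(\Omega)$, as recalled right after the definition~\eqref{defvu}, and that $V_u$ is $G_N$-invariant, which is part of the conclusion of Lemma~\ref{splitting}. Next, I would verify the four abstract conditions one by one: condition~\eqref{crucineq0}, namely the existence of the directional derivative $f'(u)v$ for $u\in\fixs$ with $f(u)\in\R$ and $v\in V_u$, is precisely the content of Proposition~\ref{derivate}, which moreover supplies the explicit integral formula for $f'(u)v$; conditions~\eqref{crucineq1} and \eqref{crucineq2} are exactly the two inequalities stated in parts (ii) and (i) of Corollary~\ref{subdcaract-cor}, respectively; and condition~\eqref{suspazio}, the decomposition $V_u=\bigcup_{j\geq 1}C_j$ into convex, closed, $G_N$-invariant sets on which $f'(u)$ restricts continuously, is established in Lemma~\ref{splitting}.

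With all hypotheses of Theorem~\ref{abstr22} in force, I would apply that theorem at the given $u\in\fixs$, which by assumption satisfies $0\in\partial f|_{\fixs}(u)$. The first conclusion of Theorem~\ref{abstr22} yields immediately $f'(u)v=0$ for every $v\in V_u$, and the second conclusion gives $\partial f(u)\subseteq\{0\}$; when $\partial f(u)$ is nonempty, this forces $\partial f(u)=\{0\}$, as desired.

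There is really no serious obstacle here: the corollary is a packaging step whose role is to show that the abstract symmetric criticality framework of Section~\ref{secondab} applies to the quasi-linear functional~\eqref{funzionale}, and the genuine content—the smooth dependence of $J$ along directions in $V_u$, the bounds $(f|_{\fixs})^\circ(u;v)\leq f'(u)v$ and $f^\circ(u;v)\leq f'(u)v$, and the exhaustion of $V_u$ by good convex invariant sets—has been carried out in Proposition~\ref{derivate}, Lemma~\ref{subdcaract}, Corollary~\ref{subdcaract-cor}, and Lemma~\ref{splitting}. The only point that requires any care is bookkeeping, to make sure that the abstract hypotheses are matched to the correct concrete statements.
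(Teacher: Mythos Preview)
Your proposal is correct and follows essentially the same approach as the paper's own proof, which simply invokes Theorem~\ref{abstr22} after noting that conditions \eqref{crucineq0}--\eqref{suspazio} are verified via Proposition~\ref{derivate}, Lemma~\ref{splitting}, and Corollary~\ref{subdcaract-cor}. Your write-up is just a more detailed unpacking of that same one-line argument.
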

\begin{proof}
It is sufficient to apply Theorem~\ref{abstr22}, since conditions \eqref{crucineq0}-\eqref{suspazio} are 
satisfied in view of Proposition~\ref{derivate}, Lemma~\ref{splitting} and Corollary \ref{subdcaract-cor}.
\end{proof}

Next we state some technical facts, necessary to the proof of the main result, Theorem~\ref{appl}.

\begin{proposition}
	\label{chiavesemicontfunct}
	It holds
	\begin{equation*}
	\forall (u,\xi)\in\epi{f|_\fixs}:\quad
	f(u)<\xi\,\,\,\Longrightarrow\,\,\,\ws{\gra{f|_\fixs}}(u,\xi)=1.
	\end{equation*}
\end{proposition}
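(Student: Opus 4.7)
Since the function $\gra{f|_\fixs}$ is $1$-Lipschitz on $\epi{f|_\fixs}$ equipped with the product metric (as noted after its definition in \eqref{defg}), the bound $|d\gra{f|_\fixs}|(u,\xi) \leq 1$ comes for free. The substantive task is therefore the reverse inequality: for every $\sigma \in (0,1)$ I must exhibit $\delta>0$ and a continuous ${\mathcal H}: B((u,\xi),\delta)\cap \epi{f|_\fixs} \times [0,\delta] \to \epi{f|_\fixs}$ with $d({\mathcal H}((v,\mu),t),(v,\mu))\leq t$ and $\pi_2\,{\mathcal H}((v,\mu),t)\leq \mu - \sigma t$. The plan is to mimic, in the $\fixs$-restricted setting, the standard construction used for the unrestricted $f$ in \cite{sq-monog}, making crucial use of the cut-off apparatus already deployed in Lemma~\ref{subdcaract}.

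Set $\eta := \xi - f(u) > 0$ and let $H \in C^\infty(\R)$ be the cut-off of Lemma~\ref{subdcaract}. Since $J(u)<+\infty$, the growth bound \eqref{j2} and dominated convergence, combined with the continuity of $I$, allow me to pick $k\geq 1$ so large that $f(H(u/k)u)\leq f(u)+\eta/8$. The pointwise truncation $v\mapsto H(v/k)v$ preserves $\fixs$: for $v \in \fixs$ and $g\in G_N$,
\[
g\bigl(H(v/k)v\bigr)(x)=H\bigl(v(g^{-1}x)/k\bigr)\,v(g^{-1}x)=H(v(x)/k)\,v(x),
\]
so this truncation is a self-map of $\fixs$.

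With $k$ fixed, the deformation I have in mind is
\[
{\mathcal H}\bigl((v,\mu),t\bigr)\;:=\;\Bigl(v+\tau(t)\bigl(H(v/k)v-v\bigr),\;\mu-\sigma t\Bigr),
\]
where $\tau:[0,\delta]\to[0,1]$ is continuous, $\tau(0)=0$, and is calibrated so that $\tau(t)\|H(v/k)v-v\|_{1,p}\leq t\sqrt{1-\sigma^2}$ uniformly for $(v,\mu)\in B((u,\xi),\delta)\cap\epi{f|_\fixs}$ (this is achievable because for $v$ close to $u$ the quantity $\|H(v/k)v-v\|_{1,p}$ is uniformly bounded by a neighborhood-independent constant). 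This choice guarantees that the product-metric displacement of ${\mathcal H}$ from $(v,\mu)$ is at most $t$, while its first coordinate stays in $\fixs$.

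The main obstacle, and the technical heart of the proof, is the verification that ${\mathcal H}((v,\mu),t)\in\epi{f|_\fixs}$, that is,
\[
f\bigl(v+\tau(t)(H(v/k)v-v)\bigr)\leq \mu-\sigma t.
\]
The naive pure-vertical choice $\tau\equiv 0$ fails exactly because the "gap" $\mu-f(v)$ may vanish at points $(v,f(v))$ on the graph that can approach $(u,\xi)$. Introducing the cut-off in the first coordinate moves $v$ along a perturbation that is $L^\infty$-bounded, and on such perturbations $J$ is continuous by the growth estimates \eqref{j2}--\eqref{j32} (this is the very mechanism exploited in Lemma~\ref{subdcaract}); combined with the $C^1$-continuity of $I$ and the smallness $f(H(u/k)u)-f(u)<\eta/8$, propagating the bound along the straight-line interpolation yields the required epigraph inclusion provided $\delta$ is taken small enough. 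The delicate point is the correct order of choices ($k$ first, then $\tau$, then $\delta$), exactly as in the proof of Lemma~\ref{subdcaract}.
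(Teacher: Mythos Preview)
Your overall strategy---deform the first coordinate toward a truncation so as to land in a region where $J$ is under control---is correct in spirit and parallels what the paper does. But the key step, the epigraph inclusion
\[
f\bigl((1-\tau(t))v+\tau(t)H(v/k)v\bigr)\leq\mu-\sigma t,
\]
is not established, and the justification you offer contains an error.

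You claim the deformation ``moves $v$ along a perturbation that is $L^\infty$-bounded'' and invoke the mechanism of Lemma~\ref{subdcaract}. This is false: the direction $H(v/k)v-v$ equals $-v$ on $\{|v|\geq 2k\}$, hence is unbounded. In Lemma~\ref{subdcaract} the bounded direction is $H(z/k)v$ with a \emph{fixed} $v\in V_u\subset L^\infty(\Omega)$; here your direction depends on the moving point and lies outside $L^\infty$. Consequently the interpolated point $(1-\tau)v+\tau H(v/k)v$ is not in any fixed $L^\infty$-ball for $\tau<1$, and no continuity of $J$ along the path is available from the growth estimates. You do control $f$ at the \emph{endpoint} $H(v/k)v$ (which is bounded by $2k$), but for small $t$ the deformation sits near $v$, and precisely in the worst case $(v,\mu)=(v,f(v))$ with $f(v)$ close to $\xi$ you would need $f$ to \emph{strictly decrease} along the path out of $v$---something your argument does not provide.

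The paper does not reconstruct this step: it invokes \cite[Theorem~3.11 and Proposition~3.7]{pelsqu} and simply observes that the deformation used there,
\[
{\mathscr H}(z,t)=(1-t)z+tT_k(z),
\]
with the standard truncation $T_k$, maps $\fixs$ into itself. The estimates in \cite{pelsqu} exploit the specific structure of $T_k$ (in particular $|DT_k(z)|\leq |Dz|$ pointwise) together with the convexity~\eqref{j1} and the growths~\eqref{j2}--\eqref{j32} to obtain the required one-sided control on $J$ along the interpolation; your smooth cut-off $H(\cdot/k)$ does not obviously share those monotonicity properties. If you want a self-contained proof, switch to $T_k$ and supply (or cite) the relevant estimate from \cite{pelsqu}; the phrase ``propagating the bound along the straight-line interpolation'' is exactly where the real work lies, and it is not done here.
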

\begin{proof}
	Notice first that~\cite[Theorem 3.11]{pelsqu} extends to the case where $H^1_0(\Omega)$ is
	substituted by $W^{1,p}_0(\Omega)$, $\Omega$ is possibly unbounded and the growths~\eqref{j2}-\eqref{j32} are assumed.
	Since $f=J+I$ with $I\in C^1(\fixs,\R)$, the assertion follows by~\cite[Proposition 3.7]{pelsqu} 
	with $X=\fixs$ by using~\cite[Theorem 3.11]{pelsqu} with $J$ replaced by the restriction $J|_\fixs$. In fact, it is sufficient 
	to notice that for the deformation (for $\delta,\eta>0$ and $k\geq 1$)
	$$
	{\mathscr H}(z,t)=(1-t)z+t T_k(z),\qquad z\in B(u,\delta)\cap J^\eta,\,\, t\in[0,\delta],
	$$ 
	making the job in~\cite[Theorem 3.11]{pelsqu} it is ${\mathscr H}(z,t)\in \fixs$ 
	for $z\in B(u,\delta)\cap\fixs$.
\end{proof}

\begin{proposition}
	\label{densitypro}
Let	$u\in \fixs$. Then the subspace $V_u\cap \fixs$ is dense in $\fixs$.
\end{proposition}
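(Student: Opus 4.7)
The plan is to approximate any $v\in\fixs$ by elements of $V_u\cap\fixs$ via a double cut-off: truncate $v$ at height $k$ to secure the $L^\infty$ bound on $v$ required by $V_u$, and multiply by a $G_N$-invariant cut-off built from $|u|$ to force $u$ to be essentially bounded on the resulting support. Concretely, I would fix $H\in C^\infty(\R,[0,1])$ with $H\equiv 1$ on $[-1,1]$ and $\mathrm{supp}\,H\subset[-2,2]$, let $T_k:\R\to\R$ denote the standard truncation at height $k$, and set
$$
w_{k,j}:=T_k(v)\,H\!\left(\frac{|u|}{j}\right),\qquad k,j\geq 1.
$$

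First I would verify that $w_{k,j}\in V_u\cap\fixs$. Membership in $W^{1,p}_0(\Omega)\cap L^\infty(\Omega)$ follows from the chain rule applied to $H(|u|/j)$ together with the fact that $W^{1,p}_0(\Omega)$ is an ideal under multiplication by elements of $W^{1,p}(\Omega)\cap L^\infty(\Omega)$; the inclusion $\{w_{k,j}\neq 0\}\subset\{|u|\leq 2j\}$ ensures the essential boundedness of $u$ on the support of $w_{k,j}$, so $w_{k,j}\in V_u$. Invariance under $G_N$ is automatic: since $u\in\fixs$, the functions $|u|$ and $H(|u|/j)$ are $G_N$-invariant, while $T_k$ commutes with the pullback action, so $T_k(v)\in\fixs$, and the product of two invariants is invariant.

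Next, the convergence $w_{k,j}\to v$ in $W^{1,p}_0(\Omega)$ would be obtained by a diagonal argument. For fixed $k$ and $j\to\infty$, dominated convergence yields $w_{k,j}\to T_k(v)$ in $L^p$ and also $H(|u|/j)\nabla T_k(v)\to\nabla T_k(v)$ in $L^p$; the remaining gradient piece reads
$$
\frac{1}{j}\,T_k(v)\,H'\!\left(\frac{|u|}{j}\right)\mathrm{sgn}(u)\,Du,
$$
which is bounded in absolute value by $\frac{2k}{j}|Du|\in L^p$ and therefore tends to $0$ in $L^p$. Combining this with the classical convergence $T_k(v)\to v$ in $W^{1,p}_0(\Omega)$ as $k\to\infty$, a standard diagonal extraction produces a sequence $j(k)\to\infty$ such that $w_{k,j(k)}\to v$ in $W^{1,p}_0(\Omega)$, yielding the density of $V_u\cap\fixs$ in $\fixs$.

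The main obstacle I expect is bookkeeping rather than conceptual: one must check that $H(|u|/j)$ really lies in $W^{1,p}(\Omega)\cap L^\infty(\Omega)$ via the Sobolev chain rule, that its product with $T_k(v)\in W^{1,p}_0(\Omega)$ preserves the zero trace, and that both $T_k$ and the cut-off $H(|u|/\cdot)$ respect the $G_N$-action in the appropriate sense; once these technical ingredients are in place, the convergence is a routine application of the dominated convergence theorem.
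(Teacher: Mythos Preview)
Your proof is correct and follows essentially the same approach as the paper: both use the double cut-off $T_k(v)\,H(u/h)$ (the paper) or $T_k(v)\,H(|u|/j)$ (your version) and conclude via dominated convergence. The only cosmetic difference is that you insert the absolute value $|u|$ in the cut-off, which is unnecessary since $u\in\fixs$ already makes $H(u/h)$ $G_N$-invariant, but this does no harm.
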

\begin{proof}
	Let $v\in \fixs$ and take $\eps>0$. Let $T_k:\R\to\R$ be the Lipschitz function
	such that $T_k(s)=s$, for $|s|\leq k$, and $T_k(s)=ks|s|^{-1}$, for $|s|\geq k$. 
	Then $T_k(v)\in W^{1,p}_0(\Omega)\cap L^\infty(\Omega)$ and $T_k(v)\in\fixs$.
	By Lebesgue's theorem, there exists $k\geq 1$ such that $\|v-T_k(v)\|_{1,p}<\eps/2$.
	For this $k\geq 1$, for all $h\geq 1$, consider now $v_h=H(u/h)T_k(v)$, where $H$ is defined as in 
	the proof of Lemma~\ref{subdcaract}.
	Then, $v_h\in W^{1,p}_0(\Omega)\cap L^\infty(\Omega)$, $v_h\in\fixs$ and, of course, 
	$u\in L^\infty(\{v_h\neq 0\})$. In turn, $v_h\in V_u\cap\fixs$, $v_h(x)\to T_k(v)(x)$,
	$|v_h(x)|\leq |T_k(v)|\leq |v(x)|$ for a.e.~$x\in \Omega$,
	$D_j v_h(x)\to D_jT_k(v)(x)$ a.e.~in $\Omega$ as $h\to\infty$ and there exists a positive constant $C$
	(depending upon $k$), such that
	\begin{align*}
	|D_j v_h(x)| &=|h^{-1}H'(u/h)D_ju(x) T_k(v)(x)+H(u/h)D_j v(x)\chi_{\{|v|\leq k\}}(x)| \\
	& \leq C|D_j u(x)|+C|D_j v(x)|,\quad\text{for a.e.~$x\in \Omega$}.
\end{align*}
	By Lebesgue's theorem, we find $h\geq 1$ so large that $\|T_k(v)-v_h\|_{1,p}<\eps/2$. 
	Then, we conclude that $\|v_h-v\|_{1,p}\leq \|v_h-T_k(v)\|_{1,p}+\|T_k(v)-v\|_{1,p}<\eps$
	with $v_h\in V_u\cap\fixs$.
\end{proof}

\begin{proposition}\label{linkkkfix}
Let	$u\in \fixs$. Then it holds
\begin{align*}
|df|_\fixs|(u)  &\geq \sup\Big\{\int_\Omega  j_t(u,|Du|)\frac{Du}{|Du|}\cdot D v+
\int_\Omega j_s(u,|Du|)v\\
& \,\,\, +\int_\Omega |u|^{p-2}uv-\int_\Omega |u|^{q-2}uv:\,\, v\in V_u\cap\fixs,\,\,\|v\|_{1,p}
\leq 1\Big\}.
\end{align*}
Moreover,
\begin{align}
\label{stimaslope}
\int_\Omega j_t(u,|Du|)|Du|+\int_\Omega j_s(u,|Du|)u \leq |dJ|_\fixs|(u)\|u\|_{1,p}.
\end{align}
In turn, if $|dJ|_\fixs|(u)<\infty$, then
$j_t(u,|Du|)|Du|\in L^1(\Omega)$ and $j_s(u,|Du|)u\in L^1(\Omega)$.
\end{proposition}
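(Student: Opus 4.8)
The plan is to establish the two claims of Proposition~\ref{linkkkfix} in sequence, relying on the machinery already in place. For the first inequality, I would unwind the definition of the weak slope $|df|_\fixs|(u)$ through Proposition~\ref{connection}: since $|d(f|_\fixs)|(u)<\infty$ yields a nonempty subdifferential $\partial(f|_\fixs)(u)$ with $\min\{\|\beta\|:\beta\in\partial(f|_\fixs)(u)\}\leq |d(f|_\fixs)|(u)$, and since by (ii) of Corollary~\ref{subdcaract-cor} combined with the linearity of $\{v\mapsto f'(u)v\}$ on $V_u\cap\fixs$ every such $\beta$ must satisfy $\langle\beta,v\rangle=f'(u)v$ for all $v\in V_u\cap\fixs$ (here the remark after Lemma~\ref{subdcaract} pins down $\partial(f|_\fixs)(u)=\{\beta\}$ as a singleton and identifies $\langle\beta,v\rangle$ with the explicit integral expression via Proposition~\ref{derivate}), taking the supremum over the unit ball of $V_u\cap\fixs$ gives exactly the dual-norm lower bound claimed. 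So the first bound is essentially a repackaging of Corollary~\ref{subdcaract-cor}(ii) plus Proposition~\ref{connection}.

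For the second inequality \eqref{stimaslope}, the natural test direction is $v=u$ itself — or rather a truncated version of it, since $u$ need not lie in $V_u$. I would pick $v_k := H(u/k)u$ with $H$ the cut-off function from the proof of Lemma~\ref{subdcaract}, so that $v_k\in V_u\cap\fixs$ (invariance of $u$ carries over as in Lemma~\ref{splitting}), compute $J'(u)v_k$ explicitly via Proposition~\ref{derivate}, and then divide by $\|v_k\|_{1,p}$ and invoke the first part of the proposition applied to $J|_\fixs$ (the $J$-analogue, which holds by the same argument). The point is that $H(u/k)u\to u$ in $W^{1,p}_0(\Omega)$ as $k\to\infty$, so $\|v_k\|_{1,p}\to\|u\|_{1,p}$, while on the integrand side one has, from \eqref{j4}, $j_s(u,|Du|)u\geq 0$ on $\{|u|\geq R\}$ and $j_t(u,|Du|)|Du|\geq 0$ by monotonicity \eqref{j1}; hence the positive parts of $\int j_t(u,|Du|)H(u/k)|Du| + \int j_s(u,|Du|)H(u/k)u$ can be controlled from below, and Fatou's lemma pushes the limit inside. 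The final sentence — integrability of $j_t(u,|Du|)|Du|$ and $j_s(u,|Du|)u$ — then follows because \eqref{stimaslope} bounds the integral of a function that is nonnegative outside a bounded range of $u$, while on $\{|u|<R\}\cup\{|u|<R'\}$ the growths \eqref{j3}, \eqref{j32} together with $J(u)<\infty$ (equivalently $|Du|^p\in L^1$ via \eqref{j2}) give local $L^1$ bounds.

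The main obstacle I anticipate is the limiting argument for \eqref{stimaslope}: one must be careful that $H(u/k)u$ really does converge to $u$ in norm (the product rule produces a term $k^{-1}H'(u/k)\,Du\,u$ which is supported on $\{k\leq|u|\leq 2k\}$ and is bounded by $2|Du|$ there, hence vanishes in $L^p$ by dominated convergence since $|Du|^p\in L^1$), and simultaneously that the integrand $j_t(u,|Du|)H(u/k)|Du|+j_s(u,|Du|)H(u/k)u$, which is \emph{not} globally nonnegative because of the region $|u|<R$ where $j_s(u,|Du|)u$ may be negative, can still be handled. The cleanest route is to split $\int_\Omega = \int_{\{|u|\geq R\}} + \int_{\{|u|<R\}}$: on the first region everything is nonnegative and Fatou applies directly to the $k\to\infty$ limit; on the second region, $|j_s(u,|Du|)u|\leq R\beta(R)|Du|^p\in L^1$ and $|j_t(u,|Du|)|Du||\leq\gamma(R)|Du|^p\in L^1$ by \eqref{j3}, \eqref{j32} and $J(u)<\infty$, so those contributions are a fixed finite constant absorbed harmlessly. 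Assembling the two regions and letting $k\to\infty$ in the inequality $J'(u)v_k\leq |d(J|_\fixs)|(u)\|v_k\|_{1,p}$ yields \eqref{stimaslope}, and the integrability conclusion drops out immediately.
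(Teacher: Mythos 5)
Your argument for the first inequality is correct, and it is a different (in fact cleaner) route than the paper's: the paper extends the deformation construction of \cite[Propositions 4.5 and 6.2]{pelsqu}, checking that ${\mathscr H}(z,t)=z+\frac{t}{1+\eps}H(\frac zk)v$ maps $\fixs$ into itself, whereas you dispose of the trivial case $|df|_\fixs|(u)=+\infty$ and otherwise invoke Proposition~\ref{connection} for $f|_\fixs$ to get $\beta\in\partial (f|_\fixs)(u)$ with $\|\beta\|\leq |df|_\fixs|(u)$, then use Corollary~\ref{subdcaract-cor}(ii) applied to $\pm v$ (plus linearity of $v\mapsto f'(u)v$) to force $\langle\beta,v\rangle=f'(u)v$ on $V_u\cap\fixs$, which yields the stated supremum bound. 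This reuses Lemma~\ref{subdcaract}, already proved in the paper, and is not circular.

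The second part, however, has a genuine gap. With $v_k=H(u/k)u$ one has $v_k\in V_u\cap\fixs$, but $J'(u)v_k$ is \emph{not} $\int_\Omega j_t(u,|Du|)|Du|H(\frac uk)+\int_\Omega j_s(u,|Du|)H(\frac uk)u$: the product rule adds the term $\int_\Omega j_t(u,|Du|)|Du|\,\frac uk H'(\frac uk)$, supported on $\{k\leq|u|\leq 2k\}$. You only used this product-rule term to check $v_k\to u$ in $W^{1,p}_0(\Omega)$, but its contribution to $J'(u)v_k$ is the real obstruction: since $sH(s)$ must decrease from $1$ to $0$ on $[1,2]$, this term is negative, and its modulus is only bounded by $C\int_{\{k\leq|u|\leq 2k\}}j_t(u,|Du|)|Du|\leq C\gamma(2k)\int_{\{k\leq|u|\leq 2k\}}|Du|^p$, which need not vanish as $k\to\infty$ because $\gamma$ is unbounded in the merely lower semi-continuous setting; it vanishes only if one already knows $j_t(u,|Du|)|Du|\in L^1(\Omega)$, i.e.\ the conclusion being proved. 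The same defect affects any truncation $\theta_k(u)\in V_u$: membership in $V_u$ forces $\theta_k$ to vanish where $|u|$ is large, hence $\theta_k'<0$ on some range, producing an uncontrollable negative term $\int_\Omega j_t(u,|Du|)|Du|\,\theta_k'(u)$; the natural direction $T_k(u)$, for which $T_k'\geq 0$, is not in $V_u$, so the first inequality cannot be applied to it. This is exactly why the paper does not deduce \eqref{stimaslope} from the first assertion: it estimates $|dJ|_\fixs|(u)$ directly from Definition~\ref{defslope} through the deformation ${\mathscr H}(z,t)=z-\frac{t}{\|T_k(u)\|_{1,p}(1+\eps)}T_k(z)$, extending \cite[Lemma 4.6]{pelsqu} and checking that it preserves $\fixs$; no cutoff-derivative term arises there, and only afterwards one lets $k\to\infty$ with Fatou. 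Your final integrability remark is fine once \eqref{stimaslope} is in place, but as written the key estimate \eqref{stimaslope} is not established.
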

\begin{proof}
Concerning the first assertion, notice that~\cite[Propositions 4.5 and 6.2]{pelsqu} extend to the case where $H^1_0(\Omega)$ is
substituted by $W^{1,p}_0(\Omega)$, $\Omega$ is possibly unbounded and \eqref{j2}-\eqref{j32} are assumed.		
The assertion follows as in \cite[Propositions 4.5 and 6.2]{pelsqu} applied with $f$ and $J$ substituted by
$f|_{\fixs}$ and $J|_{\fixs}$. It is enough to notice that, fixed $v\in V_u\cap\fixs$, the deformation 
(for $\delta>0$ and $k\geq 1$)
$$
{\mathscr H}(z,t)=z+\frac{t}{1+\eps} H\big(\frac{z}{k}\big)v,\qquad z\in B(u,\delta),\,\, t\in[0,\delta],
$$
that makes the job in the proof of~\cite[Proposition 4.5]{pelsqu} satisfies ${\mathscr H}(z,t)\in \fixs$ 
for every $z\in B(u,\delta)\cap\fixs$. 
Concerning the second assertion, notice that~\cite[Lemma 4.6]{pelsqu} extends to the case where $H^1_0(\Omega)$ is
substituted by $W^{1,p}_0(\Omega)$, $\Omega$ is possibly unbounded and \eqref{j2}-\eqref{j32} hold.
Furthermore, the deformation (for $\delta>0$ and $k\geq 1$)
$$
{\mathscr H}(z,t)=z-\frac{t}{\|T_k(u)\|_{1,p}(1+\eps)} T_k(z),\qquad z\in B(u,\delta),\,\, t\in[0,\delta],
$$
that works inside the proof of~\cite[Lemma 4.6]{pelsqu} is again such that 
${\mathscr H}(z,t)\in \fixs$ whenever one takes $z\in B(u,\delta)\cap\fixs$.
\end{proof}

\begin{proposition}\label{testenlarging}
Let $w\in (\fixs)'$ and let $u\in \fixs$ be such that
\begin{equation*}
\int_{\Omega}j_s(u,|Du|)v+\int_{\Omega}
j_t(u,|Du|)\frac{Du}{|Du|}\cdot Dv  
+\int_\Omega |u|^{p-2}uv=\int_\Omega |u|^{q-2}uv+\langle w,v\rangle,
\end{equation*}
for every $v\in V_u\cap \fixs$.
Moreover, suppose that $j_t(u,|Du|)|Du|\in L^1(\Omega)$,
and that there exist  $v\in \fixs$ and $\eta\in L^1(\Omega)$ such that
\begin{equation}\label{control2}
j_s(u,|Du|)v\geq\eta,
\qquad
j_t(u,|Du|)\frac{Du}{|Du|}\cdot Dv\geq\eta.
\end{equation}
Then $j_s(u,|Du|)v\in L^1(\Omega)$, 
$j_t(u,|Du|)\frac{Du}{|Du|}\cdot Dv\in L^1(\Omega)$ 
and $v$ is an admissible test.
\end{proposition}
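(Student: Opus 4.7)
My plan is to approximate $v$ by admissible test functions $\phi_{h,k}:=H(u/h)T_k(v)$, where $H\in C^\infty(\R)$ is the cut-off used in Lemma~\ref{subdcaract} and $T_k$ is the truncation used in Proposition~\ref{densitypro}, and then to pass to the limit using Fatou's lemma together with Lebesgue's theorem. Since $|\phi_{h,k}|\leq k$ and $\phi_{h,k}$ is supported in $\{|u|\leq 2h\}$, $\phi_{h,k}$ belongs to $V_u$; the $G_N$-invariance of $u$ and $v$, together with the fact that $H$ and $T_k$ act pointwise, yields $\phi_{h,k}\in V_u\cap\fixs$, so each $\phi_{h,k}$ may be legitimately inserted in the equation satisfied by $u$.

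After expanding $D\phi_{h,k}=H(u/h)\chi_{\{|v|\leq k\}}Dv+\tfrac{1}{h}H'(u/h)T_k(v)Du$, the identity will take the form $A_{h,k}+B_{h,k}+C_{h,k}+\int|u|^{p-2}u\phi_{h,k}=\int|u|^{q-2}u\phi_{h,k}+\langle w,\phi_{h,k}\rangle$, where
\begin{align*}
A_{h,k}&:=\int_\Omega j_s(u,|Du|)\, H(u/h)\, T_k(v),\\
B_{h,k}&:=\int_\Omega j_t(u,|Du|)\frac{Du}{|Du|}\cdot Dv\, H(u/h)\chi_{\{|v|\leq k\}},\\
C_{h,k}&:=\frac{1}{h}\int_\Omega H'(u/h)\, T_k(v)\, j_t(u,|Du|)|Du|.
\end{align*}
The hypothesis $j_t(u,|Du|)|Du|\in L^1$ gives $|C_{h,k}|\leq(2k/h)\int_\Omega j_t(u,|Du|)|Du|$, which will vanish along any sequence with $k/h\to 0$ (for instance $h=k^2$). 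Since $u,v\in W^{1,p}_0(\Omega)\hookrightarrow L^p(\Omega)\cap L^{p^*}(\Omega)$, Lebesgue's theorem (with dominations $|u|^{p-1}|v|$ and $|u|^{q-1}|v|$, both in $L^1$) will give $\int|u|^{p-2}u\phi_{h,k}\to\int|u|^{p-2}uv$ and $\int|u|^{q-2}u\phi_{h,k}\to\int|u|^{q-2}uv$; moreover, along the same sequence $\phi_{h,k}\to v$ in $W^{1,p}_0(\Omega)$, so $\langle w,\phi_{h,k}\rangle\to\langle w,v\rangle$ by continuity of $w\in(\fixs)'$.

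The key step will be the pointwise lower bound for the integrands of $A_{h,k}$ and $B_{h,k}$. On $\{|v|\leq k\}$ the first integrand equals $j_s H v\geq H\eta\geq -|\eta|$, since $H\in[0,1]$; on $\{|v|>k\}$ one has $T_k(v)=(k/|v|)v$, so $j_s H T_k(v)=(k/|v|)H j_s v\geq(k/|v|)H\eta\geq -|\eta|$, because $(k/|v|)H\in[0,1]$. An analogous direct estimate works for $B_{h,k}$ using $H\chi_{\{|v|\leq k\}}\in[0,1]$ and $j_t\frac{Du}{|Du|}\cdot Dv\geq\eta$. Combined with the preceding identity, $A_{h,k}+B_{h,k}$ will be uniformly bounded above and below, hence so will each of $A_{h,k}$ and $B_{h,k}$ individually. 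Applying Fatou's lemma to the non-negative integrands $j_s H T_k(v)+|\eta|\to j_s v+|\eta|$ and $j_t\frac{Du}{|Du|}\cdot Dv\, H\chi_{\{|v|\leq k\}}+|\eta|\to j_t\frac{Du}{|Du|}\cdot Dv+|\eta|$, I would then conclude that both $j_s(u,|Du|)v$ and $j_t(u,|Du|)\frac{Du}{|Du|}\cdot Dv$ lie in $L^1(\Omega)$: the negative parts are dominated by $|\eta|$, and the positive parts are finite by Fatou.

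With these integrabilities available, the majorants $|j_s H T_k(v)|\leq |j_s v|$ and $|j_t\frac{Du}{|Du|}\cdot Dv\, H\chi_{\{|v|\leq k\}}|\leq |j_t\frac{Du}{|Du|}\cdot Dv|$ will legitimize dominated convergence in $A_{h,k}$ and $B_{h,k}$, and passing to the limit in the identity will produce the equation with test function $v$, showing that $v$ is indeed an admissible test. The main obstacle I expect is precisely the pointwise sign analysis just described: since $T_k$ does not commute with multiplication by $j_s$, one must carefully exploit the factorization $T_k(v)=(k/|v|)v$ on $\{|v|>k\}$ together with $(k/|v|)H\in[0,1]$ to transfer the lower bound from $j_s v$ to $j_s H T_k(v)$, and similarly for the gradient term.
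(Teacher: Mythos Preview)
Your argument is correct and follows precisely the approximation scheme that underlies \cite[Theorem~4.8]{pelsqu}, to which the paper merely refers (``a simple adaptation \ldots\ to the invariant framework''); the only adaptation needed is the observation you make, namely that the test functions $H(u/h)T_k(v)$ remain in $\fixs$ whenever $u,v\in\fixs$. Your pointwise lower bound via the factorisation $T_k(v)=(k/|v|)v$ on $\{|v|>k\}$ and the choice $h=k^2$ to kill $C_{h,k}$ are exactly the right ingredients, and the two-step passage to the limit (Fatou for integrability, then dominated convergence for the identity) is the standard way to close the argument.
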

\begin{proof}
It is a simple adaptation of~\cite[Theorem 4.8]{pelsqu} to the invariant framework.
\end{proof}

We are now ready to provide the proof of Theorem~\ref{appl}.

\subsubsection{Proof of Theorem~\ref{appl}}
By Proposition~\ref{derivate} and Corollary~\ref{subdcaract-cor}, $f$ satisfies condition~\eqref{crucineq0} and conditions 
\eqref{crucineq1} and \eqref{crucineq2} of Theorem~\ref{abstr22}. Furthermore, by Lemma~\ref{splitting},
condition~\eqref{suspazio} is satisfied as well. We now consider $f$ restricted to $\fixs$, namely
$$
f|_\fixs:\fixs\to\R\cup\{+\infty\}.
$$
Let us now prove that $f|_\fixs$ admits a nontrivial critical point
$u\in\fixs$, by applying the Mountain Pass theorem for semi-continuous functionals (cf. e.g. \cite[Theorem 3.9]{pelsqu}). 
This result requires condition~\eqref{keycond} to
be satisfied, which holds true by Proposition~\ref{chiavesemicontfunct}.
In light of $j(s,t)\geq \alpha_0 t^p$, there exist $\rho_0>0$ sufficiently small and $\sigma_0>0$ such that
$f(u)\geq \sigma_0$ for any $u\in\fixs$ with $\|u\|_{1,p}=\rho_0$. Moreover,
fixed a nonzero function $\psi\in \fixs\cap L^\infty(\Omega)$, by virtue of assumption \eqref{j6}, we can 
find a positive number $\tau$ such that  
$$
\alpha(\tau\|\psi\|_\infty)\tau^p\leq (2q)^{-1}\|\psi\|_q^q\|D\psi\|^{-p}_p \tau^q
\quad
\text{and}
\quad \tau>\max\Big\{\Big\{\frac{2q}{p}
\frac{\|\psi\|_p^p}{\|\psi\|_q^q}\Big\}^{\frac{1}{q-p}}\!\!\!,\frac{\rho_0}{\|\psi\|_{1,p}}\Big\}.
$$ 
In turn, setting $v_1:=\tau\psi\in \fixs$, we obtain $\|v_1\|_{1,p}>\rho_0$ and 
\begin{equation*}
f(v_1)  \leq \alpha(\tau\|\psi\|_\infty)\tau^p\|D\psi\|^p_p+\tau^p\frac{\|\psi\|^p_p}{p}-\tau^q\frac{\|\psi\|^q_q}{q} 
 \leq \tau^p\frac{\|\psi\|^p_p}{p}-\tau^q\frac{\|\psi\|^q_q}{2q}<0.
\end{equation*}
Therefore $\inf\{f(u):u\in\fixs,\|u\|_{1,p}=\rho_0\}\geq\sigma_0>0=\max\{f(0),f(v_1)\}$, 
so that $f$ admits a positive Mountain Pass value.
Furthermore, for any $c\in\R$, $f$ satisfies the Palais-Smale condition at level $c$. 
Although the proof is essentially contained in~\cite{pelsqu}, for the sake
of self-containedness, we shall provide a sketch in the following highlighting the main differences.
Let then $(u_h)\subset\fixs$ with $f(u_h)\to c$ and $|df|_{\fixs}|(u_h)\to 0$, as $h\to\infty$.
Then, by combining Proposition~\ref{densitypro}, Proposition~\ref{linkkkfix} and the Hahn-Banach theorem, there exists $(w_h)\subset (\fixs)'$
such that $\|w_h\|_{-1,p'}\to 0$ as $h\to\infty$,  $j_t(u_h,|D u_h|)|Du_h|\in L^1(\Omega)$ (by
\eqref{stimaslope}, since $|df|_{\fixs}|(u_h)<+\infty$ yields $|dJ|_\fixs|(u)<+\infty$ in light of
\cite[Theorem 4.13(ii) combined with Theorem 5.1(ii)]{campad}) and
\begin{align}
	\label{loccomp}
\int_{\Omega}j_s(u_h,|Du_h|)v & +\int_{\Omega}
j_t(u_h,|Du_h|)\frac{Du_h}{|Du_h|}\cdot D v   \\
& +\int_\Omega |u_h|^{p-2}u_hv=\int_\Omega |u_h|^{q-2}u_hv+\langle w_h,v\rangle,  \notag
\end{align}
for every $v\in V_{u_h}\cap \fixs$. 
By assumption \eqref{j5}, the sequence $(u_h)\subset\fixs$ is bounded in $\fixs$ 
(see e.g.~\cite[Lemma 3.15]{sq-toulo}). 
Moreover, by means of 
Proposition~\ref{testenlarging}, due to~\eqref{j4} we can choose the test $v=u_n$ in \eqref{loccomp}, yielding
\begin{align*}
\int_{\Omega}j_s(u_h,|Du_h|)u_h+\int_{\Omega}j_t(u_h,|Du_h|)|Du_h|+
\int_\Omega |u_h|^{p}=\int_\Omega |u_h|^{q}+\langle w_h,u_h\rangle.  
\end{align*}
By the sign condition~\eqref{j4} and the boundedness of $(u_h)$ in $W^{1,p}_0(\Omega)$, this yields
\begin{equation}
	\label{supj_t}
\sup_{h\geq 1}\int_{\Omega}j_t(u_h,|Du_h|)|Du_h|<+\infty,\qquad
\sup_{h\geq 1}\int_{\Omega}j_s(u_h,|Du_h|)u_h<+\infty.
\end{equation}
We claim that, up to a subsequence, $(u_h)$ is strongly convergent to some $u\in\fixs$. In this step,
of course, the compact injections~\eqref{immersionicompattt} will play a crucial r\v ole. Let $u\in\fixs$ be the weak limit 
of the sequence $(u_h)$ in $\fixs$. Since $j_t(s,t)t\geq 0$, by Fatou's Lemma the first 
formula in~\eqref{supj_t} yields $j_t(u,|Du|)|Du|\in L^1(\Omega)$.

By following the first part of the proof of~\cite[Theorem 5.1]{pelsqu}, we get $D u_h(x)\to Du(x)$ 
a.e.~in $\Omega$. More precisely, given a sequence of bounded $G_N$-invariant invading domains $(\Omega_k)$ of $\Omega$, one is lead
to the application of \cite[Theorem 5]{dalmur} to a suitable variational identity involving a
class of Leray-Lions operators $b_h(x,\xi)$ related to $j(s,|\xi|)$, $(\tilde w_h)\subset (\fixs)'$ convergent
in $\|\cdot\|_{-1,p'}$ and a sequence $(f_h)$ bounded in
$L^1(\Omega_k)$. This is achieved by testing \eqref{loccomp}
with $H(u_h/k)v\in V_{u_h}\cap \fixs$, where $v\in W^{1,p}_0(\Omega_k)$ is any bounded 
$G_N$-invariant function. In fact, after suitably rearranging the terms arising in the calculation, one is lead to
\begin{equation*}
	\begin{cases}
		\displaystyle
\int_{\Omega_k} b_h(x,Du_h)\cdot Dv=\langle \tilde w_h,v\rangle+\int_{\Omega_k} f_hv,  &\\
\noalign{\vskip3pt}
\,\,\text{for every $G_N$-invariant function $v\in W^{1,p}_0(\Omega_k)\cap L^\infty(\Omega_k)$}. & 
\end{cases}
\end{equation*}
We now point out that, in our framework, differently from~\cite{dalmur} 
(cf.~\cite[Formula 19]{dalmur}) the above variational identity only holds  
for $G_N$-invariant test functions. On the other hand, since $\Omega_k$ and $u_h$ are $G_N$-invariant, 
this is actually sufficient to succeed. In fact, for a suitable sequence of numbers $\delta_k>0$, 
the only test functions which arise in the proof of 
\cite[Theorem 5]{dalmur} are indeed $G_N$-invariant functions, being exactly of the form 
$$
v_h:=\varphi_K\,\psi_{\delta_h}\circ (u_h-u)\in W^{1,p}_0(\Omega_k)\cap L^\infty(\Omega_k),
$$ 
where $K$ is a fixed $G_N$-invariant compact subset of $\Omega_k$,
$\varphi_K\in C^\infty_c(\Omega_k)$ with $0\leq\varphi_K\leq 1$ is a $G_N$-invariant function with $\varphi_K=1$ on $K$ and 
${\rm supt}(\varphi_K)\subseteq U\subseteq\bar U\subseteq \Omega_k$
for some $G_N$-invariant open set $U\subseteq\Omega_k$ and $\psi_\delta\in C^1_c(\R)$, 
$\psi_\delta(s)=s$ for $\{|s|\leq \delta\}$, $\psi_\delta(s)=0$ for $\{|s|\geq 2\delta\}$,
$|\psi_\delta|\leq 2\delta$ and $|D\psi_\delta|$ is bounded. Due to the $G_N$-invariance of $\Omega_k$
restricting to $G_N$-invariant compacts $K$ of $\Omega_k$ is without loss of generality.
In conclusion, \cite[Theorem 5]{dalmur} allows to conclude that
$D u_h(x)\to Du(x)$ for a.e.~$x$ in the set $E_k=\{x\in\Omega_k:|u(x)|\leq k\}$.
Finally, the property on $\Omega$ follows by a standard diagonal argument on the $E_k$s.
Moreover, taking suitable test functions in $\fixs$
(cf. \cite[test (5.8) and the test above (5.11)]{pelsqu}), we get
\begin{equation}
	\label{loccomp-2}
\int_{\Omega}j_s(u,|Du|)v+
\int_{\Omega} j_t(u,|Du|)\frac{Du}{|Du|}\cdot Dv+\int_\Omega |u|^{p-2}uv=\int_\Omega |u|^{q-2}uv,
\end{equation}
for every $v\in V_u\cap \fixs$. For $M>0$, consider the Lipschitz function $\zeta:\R\to\R^+$, 
\begin{equation}
	\label{zetadef}
\zeta (s)=M|s|\quad\text{for $|s|\leq R$},\qquad
\zeta (s)=MR\quad\text{for $|s|\geq R$}.
\end{equation}
By~\eqref{j4}, the growth condition on $j_s$ and $j_t(s,t)t\geq \alpha_0 t^p$, we can easily choose $M$ (depending
upon $R$ and $\alpha_0$) such that
$[j_s(s,t)+j_t(s,t)t\zeta'(s)]s\geq 0$ for all $s\in\R$ and $t\in\R^+$.
Recalling that $j_t(u_h,|Du_h|)|Du_h|\in L^1(\Omega)$ and $j_t(u,|Du|)|Du|\in L^1(\Omega)$,
by Proposition~\ref{testenlarging}, condition \eqref{j4} and the definition of $\zeta$, 
the function $v_h:=u_h e^{\zeta(u_h)}\in\fixs$ (resp. $v:=u e^{\zeta(u)}\in\fixs$)
can be taken as admissible test functions into \eqref{loccomp} (resp. in \eqref{loccomp-2}).
Finally, since by virtue of Lemma~\ref{compsobemb} $(u_h)$ strongly converges to $u$ in $L^q(\Omega)$, 
by dominated convergence,
$$
\lim_h\int_\Omega |u_h|^{q}e^{\zeta(u_h)}=\int_\Omega |u|^{q}e^{\zeta(u)}.
$$
Therefore, using Fatou's Lemma twice, we reach
\begin{align*}
&	\int_\Omega j_t(u,|Du|)|Du|e^{\zeta(u)}+\int_\Omega |u|^{p}e^{\zeta(u)} \\
& \leq \liminf_h
\Big[\int_\Omega j_t(u_h,|Du_h|)|Du_h| e^{\zeta(u_h)}+\int_\Omega |u_h|^{p}e^{\zeta(u_h)}\Big] \\
& \leq \limsup_h
\Big[\int_\Omega j_t(u_h,|Du_h|)|Du_h| e^{\zeta(u_h)}+\int_\Omega |u_h|^{p}e^{\zeta(u_h)}\Big] \\
& = \int_\Omega |u|^{q}e^{\zeta(u)}   
 -\liminf_h \int_\Omega \left[ j_s(u_h,|Du_h|)+
j_t(u_h,|D u_h|)|Du_h|\zeta'(u_h)\right] u_h e^{\zeta(u_h)} \\
& \leq \int_\Omega |u|^{q}e^{\zeta(u)}  
 -\int_\Omega \left[ j_s(u,Du)+
j_t(u,|Du|)|Du|\zeta'(u)\right] u e^{\zeta(u)} \\
& =\int_\Omega j_t(u,|Du|)|Du|e^{\zeta(u)}+\int_\Omega |u|^{p}e^{\zeta(u)},
\end{align*}
which combined with the pointwise convergence of the gradients and the inequalities
\begin{align*}
\alpha_0 |Du_h|^p  & \leq j_t(u_h,|Du_h|)|D u_h| e^{\zeta(u_h)}+|u_h|^{p}e^{\zeta(u_h)} \\
|u_h|^p & \leq j_t(u_h,|Du_h|)|Du_h| e^{\zeta(u_h)}+|u_h|^{p}e^{\zeta(u_h)}
\end{align*}
implies that $\|u_h\|_p\to \|u\|_p$  and $\|Du_h\|_p\to \|D u\|_p$ as $h\to\infty$,
by dominated convergence. By the uniform
convexity of $L^p$, we conclude $u_h\to u$ in $W^{1,p}_0(\Omega)$ as $h\to\infty$. 
The Palais-Smale condition is thus verified.
By applying \cite[Theorem 3.9]{pelsqu}, we find a nontrivial 
Mountain Pass critical point $u\in \fixs$, namely $|df|_\fixs|(u)=0$. In turn,
in light of Proposition~\ref{connection}, $0\in \partial f|_\fixs(u)$. 
Hence, by Theorem~\ref{abstr22} (more precisely by Corollary~\ref{corconcrabs2}), we get
$$
\forall v\in V_u:\quad f'(u)v=0.
$$
Now, by Proposition~\ref{derivate}, we have
$$
\forall v\in V_u:\quad \int_{\Omega}j_t(u,|Du|)\frac{Du}{|Du|}\cdot Dv
+\int_{\Omega}j_s(u,|Du|)v+\int_{\Omega}|u|^{p-2}uv=\int_{\Omega}|u|^{q-2}uv.
$$
In light of Proposition~\ref{linkkkfix}, it follows 
$j_t(u,|Du|)|Du|\in L^1(\Omega)$ and $j_s(u,|Du|)u\in L^1(\Omega)$.
Then, it holds $|j_s(u,|Du|)|\leq |j_s(u,|Du|)|\chi_{\{|u|\leq 1\}}+|j_s(u,|Du|)u|$. 
Also, for a fixed compact $K\subset\Omega$, it holds
\begin{align*}
|j_t(u,|Du|)|\chi_K & \leq |j_t(u,|Du|)\chi_{\{|Du|\geq 1\}}\chi_K|+|j_t(u,|Du|)\chi_{\{|Du|\leq 1\}}\chi_K| \\
& \leq j_t(u,|Du|)|Du|+C\alpha(|u|)\chi_K \\
& \leq j_t(u,|Du|)|Du|+C|u|^{p^*}+C\chi_K.
\end{align*}
We have thus proved that $j_s(u,|Du|)\in L^1(\Omega)$ and $j_t(u,|Du|)\in L^1_{{\rm loc}}(\Omega)$. 
In turn, for any $v\in C^\infty_c(\Omega)$, we have $j_s(u,|Du|)v\in L^1(\Omega)$ and
$j_t(u,|Du|)\frac{Du}{|Du|}\cdot Dv\in L^1(\Omega)$, so that, in light of \cite[Theorem 4.8]{pelsqu} 
(which extends to the current setting), we get
$$
\forall v\in C^\infty_c(\Omega):\, \int_{\Omega}j_t(u,|Du|)\frac{Du}{|Du|}\cdot Dv
+\int_{\Omega}j_s(u,|Du|)v+\int_{\Omega}|u|^{p-2}uv=\int_{\Omega}|u|^{q-2}uv,
$$
namely $u$ is a distributional solution. The proof is now complete. \qed
\vskip5pt
\noindent

\subsubsection{Proof of Corollary~\ref{appl-cor1}}
Let $\Omega=\R^N$ and take $X=W^{1,p}(\R^N)$ and $G={\mathscr O}(N)$. Then, the assertion follows directly 
from Theorem~\ref{appl} since the invariance $u(gx)=u(x)$ for all $g\in G$ 
is equivalent to the radial symmetry of $u$. To obtain that $u$ is nonnegative, in the application of Theorem~\ref{appl} it suffices to replace,
in the definition of $f$, the term $\int_{\R^N}|u|^{q}/q$ with $\int_{\R^N}G(u)$, where $G(s)=(s^+)^{q}/q$. Hence, 
as in the proof of Theorem~\ref{appl}, after application of our second Palais symmetric criticality
principle, one finds a $u\in {\rm Fix}(G)$ which solves
$$
\int_{\R^N}j_t(u,|Du|)\frac{Du}{|Du|}\cdot Dv
+\int_{\R^N}j_s(u,|Du|)v+\int_{\R^N}|u|^{p-2}uv=\int_{\R^N}|u^+|^{q-2}uv,
$$
for all $v\in V_u$. If $\zeta$ is the map defined in~\eqref{zetadef}, 
the function $\tilde v=-u^- e^{\zeta(u)}$ can be taken as an admissible test
function in the above identity by \cite[Theorem 4.8]{pelsqu} in view of the growth assumptions on $j_s,j_t$ and
of the sign condition~\eqref{j4}, yielding
\begin{align*}
0 &=\int_{\R^N}|u^+|^{q-2}u\tilde v=\int_{\R^N}e^{\zeta(u)} j_t(u,|Du^-|)|Du^-| \\
& +\int_{\R^N}-[j_s(u,|Du|)+j_t(u,|Du|)|Du|\zeta'(u)]u^- e^{\zeta(u)}+\int_{\R^N}|u^-|^{p}e^{\zeta(u)} \\
&\geq \int_{\R^N}|u^-|^{p}e^{\zeta(u)}\geq \int_{\R^N}|u^-|^p\geq 0,
\end{align*}
yielding $u^-=0$, hence $u\geq 0$. This concludes the proof. \qed

\vskip25pt
\noindent
{\bf Acknowledgment.} The author thanks Marco Degiovanni for some very useful discussions.
\vskip22pt

\medskip
\bigskip

\end{document}